\documentclass[12pt,reqno]{amsart}
\usepackage[foot]{amsaddr}

\newcommand{\showdateandconfidential}[2]{#2}

\usepackage{graphicx}
\usepackage{fullpage}
\usepackage{etoolbox}
\usepackage[utf8]{inputenc}
\usepackage{amsmath}  
\usepackage{amssymb}     
\usepackage{amsthm}  
\usepackage{bbm}    
\usepackage{bm}
\usepackage{accents}
\usepackage{mathrsfs}
\usepackage{mathtools}
\usepackage{fixmath}
\usepackage{fullpage}
\usepackage{tikz}
\usepackage{microtype}
\usepackage{complexity}
\usepackage{upgreek,esint} 
\usepackage{xcolor}
\usepackage{stackengine}
\usepackage{subcaption}
\usepackage{stmaryrd}
\usepackage{fancyhdr}
\usepackage{subcaption}
\usepackage{comment}

\setlength\headheight{15pt}
\setlength\headsep{15pt}
\setlength\footskip{25pt}
\setlength\oddsidemargin{0in}
\setlength\evensidemargin{0in}
\usepackage{fancyhdr}
\pagestyle{fancy}
\fancyhf{}
\showdateandconfidential{
\fancyhead[CE,CO]{\footnotesize\textbf{Working paper (confidential)}}
\fancyhead[RE,RO]{\footnotesize\today}
}{}
\fancyfoot[CE,CO]{\footnotesize\thepage}

\usepackage{amsfonts} 
\usepackage{enumerate}
\usepackage[boxed,noline,linesnumbered]{algorithm2e}
\let\oldnl\nl
\newcommand{\nonl}{\renewcommand{\nl}{\let\nl\oldnl}}
\SetInd{0.5em}{0.5em} 
\usepackage[normalem]{ulem}

\newcommand{\ubar}[1]{\text{\b{$#1$}}}

\newcommand{\hideiftight}[1]{}%

\makeatletter
\newcommand{\pushright}[1]{\ifmeasuring@#1\else\omit\hfill$\displaystyle#1$\fi\ignorespaces}
\newcommand{\pushleft}[1]{\ifmeasuring@#1\else\omit$\displaystyle#1$\hfill\fi\ignorespaces}
\makeatother

\makeatletter
\def\paragraph{\@startsection{paragraph}{4}%
  \z@\z@{-\fontdimen2\font}%
  {\normalfont\bfseries}}
\makeatother



\usepackage{hyperref} 
\hypersetup{
colorlinks=true,
linktocpage=true,
pdfstartview=FitH,
breaklinks=true,
pdfpagemode=UseNone,
pageanchor=true,
pdfpagemode=UseOutlines,
plainpages=false,
bookmarksnumbered,
bookmarksopen=false,
bookmarksopenlevel=1,
hypertexnames=true,
pdfhighlight=/O,
urlcolor=black,
linkcolor=black,
citecolor=black,
pdftitle={},
pdfauthor={},
pdfsubject={},
pdfkeywords={},
pdfcreator={pdfLaTeX},
pdfproducer={LaTeX with hyperref}
}


\usepackage[capitalise]{cleveref}



\colorlet{pricecolor}{white}
\colorlet{diffpricecolor}{white}
\colorlet{xvarcolor}{white}
\colorlet{ecolor}{white}
\colorlet{scolor}{white}
\colorlet{presentcolor}{cyan!70!blue}
\colorlet{pastcolor}{yellow!10!lime}

\usetikzlibrary{shapes}

\newtheorem{theorem}{Theorem}[section]

\newtheorem{lemma}[theorem]{Lemma}
\newtheorem{assumption}[theorem]{Assumption}

\theoremstyle{remark}

\renewcommand{\geq}{\geqslant}
\renewcommand{\leq}{\leqslant}
\renewcommand{\preceq}{\preccurlyeq}
\renewcommand{\succeq}{\succcurlyeq}

\usepackage[colorinlistoftodos,bordercolor=orange,backgroundcolor=orange!20,linecolor=orange,textsize=scriptsize]{todonotes} 
\setlength{\marginparwidth}{2cm}

\usepackage{acronym}		
\newcommand{\acdef}[1]{\textit{\acl{#1}} \textup{(\acs{#1})}\acused{#1}}		


\newacro{OT}{optimal transport}
\newacro{DP}{dynamic programming}
\newacro{PTAS}{polynomial-time approximation scheme}
\newacro{FPTAS}{fully polynomial-time approximation scheme}
\newacro{ATP}{available to promise}
\newacro{CTP}{capable to promise}
\newacro{PTP}{profitable to promise}

\definecolor{tealblue}{HTML}{007050}
\definecolor{MyBlue}{HTML}{000070}
\definecolor{MyGreen}{HTML}{007070}
\definecolor{changes}{HTML}{0050B0}

%
\newcommand{\revise}[1]{{\color{changes}#1}}		
\newcommand{\reviseagain}[1]{{\color{tealblue}#1}}		
\newcommand{\try}[1]{{\color{magenta}#1}}		
\newcommand{\fixed}[1]{{\color{MyGreen}#1}}		
\newcommand{\old}[1]{}
%


\definecolor{Cloud}{HTML}{F5F5F5}
\newcommand{\OB}[1]{\todo[color=Cloud,bordercolor=black,author=\textbf{OB},inline]{\small #1\\}}
\newcommand{\OBoff}[1]{}
\definecolor{MScolor}{HTML}{BBDDFF}

\definecolor{NOcolor}{HTML}{FFFFF0}

\definecolor{CWcolor}{HTML}{FBF4FF}

\definecolor{EDFcolor}{HTML}{FCCBB8}


\makeatletter

\newcommand{\routinenl}{%
\nonl
  \refstepcounter{AlgoLine}%
  \nlset{\arabic{AlgoLine}}
  }
\makeatother

\newcounter{condition}
\newenvironment{condition}{\refstepcounter{condition}\equation}{\tag{C\thecondition}\endequation}

\usepackage[scaled]{inconsolata}


\usepackage{newfloat}
\usepackage{caption}
\usepackage{subcaption}

\DeclareFloatingEnvironment[fileext=frm,placement={!ht},name=Algorithm,within=none]{algorithmenv}  
\captionsetup[algorithmenv]{labelfont=bf}
\makeatletter
\renewcommand{\@algocf@capt@boxed}{above}
\makeatother
\makeatletter
\newcommand{\removelatexerror}{\let\@latex@error\@gobble}
\makeatother


\usetikzlibrary{arrows, positioning}
\usepackage{pbox}
\usepackage{mathtools}




\newcommand{\dom}{\mathrm{dom}}





\newcommand{\obsolete}[1]{}
\newcommand{\nocolsep}{\arraycolsep=1.4pt\def\arraystretch{1}}
\makeatletter
\newcommand{\refereq}[2]{\overset{\text{\tiny{#1}}}{#2}}\newcommand{\vast}{\bBigg@{4}}
\newcommand{\Vast}{\bBigg@{5}}
\makeatother
\newcommand{\Real}{\mathbb{R}}

\newcommand{\Realpluszero}{\Real_{\geq 0}}

\newcommand{\Natural}{\mathbb{N}}


\newcommand{\grad}{\nabla}
\newcommand{\gradi}[1]{\grad_{#1}}

\newcommand{\Hessian}{\nabla^2}

\newcommand{\inner}[2]{\langle{#1},{#2}\rangle}

\DeclareMathOperator{\proxoperatorfunction}{prox}
\newcommand{\proxoperator}[1]{\proxoperatorfunction_{#1}}

\newcommand{\Prox}[2]{\proxoperator{#1}\left(#2\right)}

\newcommand{\expectationfunction}{\mathbb{E}}
\newcommand{\expectation}[1]{\expectationfunction[#1]}
\newcommand{\Expectation}[1]{\expectationfunction\left[#1\right]}
\newcommand{\condexpectation}[2]{\expectationfunction[#2\cond{#1}]}
\newcommand{\Condexpectation}[2]{\expectationfunction\left[#2\cond{#1}\right]}

\newcommand{\probafunction}{\mathbb{P}}
\newcommand{\proba}[1]{\probafunction(#1)}

\newcommand{\samplespace}{\Omega}
\newcommand{\filtrationfunction}{\mathcal{F}}
\newcommand{\filtrationk}[1]{\filtrationfunction_{#1}}

\usepackage{scalerel}
\newlength\bshft 
\bshft=.18pt\relax
\def\fakebold#1{\ThisStyle{\ooalign{$\SavedStyle#1$\cr%
  \kern-\bshft$\SavedStyle#1$\cr%
  \kern\bshft$\SavedStyle#1$}}}

\newcommand{\zero}[1]{o(#1)}

\newcommand{\magnitude}[1]{O(#1)}
\newcommand{\Magnitude}[1]{O\left(#1\right)}

\newcommand{\cond}{|}

\newcommand{\indicatorfunction}[1]{I_{#1}}
\newcommand{\indicator}[2]{\indicatorfunction{#1}(#2)}

\newcommand{\sigmaalgebra}{\upsigma}
\newcommand{\sigmaalgebrax}[1]{\sigmaalgebra(#1)}

\newcommand{\distfunction}{\textup{dist}}
\newcommand{\disttfunction}[1]{\distfunction_{#1}}

\newcommand{\bigdistt}[3]{\disttfunction{#1}\big(#2,#3\big)}
\DeclareMathOperator{\diag}{diag}

\DeclareMathOperator{\subjectto}{subject\ to}

\DeclarePairedDelimiter{\norm}{\|}{\|}

\newcommand{\normx}[2]{\norm{#1}_{#2}}

\newcommand{\transpose}{\top}

\newcommand{\defeq}{:=}

\newcommand{\smoothfunction}{f}
\newcommand{\smooth}[1]{\smoothfunction({#1})}

\newcommand{\nonsmoothfunction}{g}
\newcommand{\nonsmooth}[1]{\nonsmoothfunction({#1})}

\newcommand{\identity}{\textup{Id}}

\newcommand{\modulus}[1]{|{#1}|}

\newcommand{\scaledproxoperator}[2]{\proxoperatorfunction^{#1}_{#2}}
\newcommand{\scaledprox}[3]{\scaledproxoperator{#1}{#2}(#3)}
\newcommand{\scaledProx}[3]{\scaledproxoperator{#1}{#2}\left(#3\right)}

\newcommand{\iter}{k} 
\newcommand{\altiter}{l} 
 
\newcommand{\dimension}{d}
\newcommand{\primaldimension}{m}
\newcommand{\primaldimensioni}[1]{\primaldimension_{#1}}
\newcommand{\Dimension}{P}
\newcommand{\dualdimension}{q}

\newcommand{\DimensionI}[1]{\Dimension_{#1}}
\renewcommand{\probafunction}{\textup{Prob}}

\newcommand{\probas}{\pi}
\newcommand{\probai}[1]{\probas_{#1}}
\newcommand{\probaij}[2]{\probas_{#1,#2}}

\newcommand{\stepsize}{\tau}
\newcommand{\stepsizek}[1]{\stepsize^{#1}}

\newcommand{\Stepsize}{T}

\newcommand{\Stepsizek}[1]{\Stepsize}
\newcommand{\Stepsizeki}[2]{\Stepsizek{#1}_{#2}}
\newcommand{\StepsizekI}[2]{\Stepsizek{#1}_{#2}}

\newcommand{\StepsizeI}[1]{\Stepsize_{#1}}
\newcommand{\stepsizei}[1]{\stepsize_{#1}}

\newcommand{\difference}{\delta}
\newcommand{\differencemax}{\bar{\difference}}

\newcommand{\differencemin}{\ubar{\difference}}

\newcommand{\dualstepsize}{\sigma}
\newcommand{\dualstepsizek}[1]{\dualstepsize^{#1}}
\newcommand{\Diagonal}{Q}

\newcommand{\diagonal}{q}
\newcommand{\diagonali}[1]{\diagonal_{#1}}

\newcommand{\constraintcovariance}{\Sigma}
\renewcommand{\constraintcovariance}{\Xi}

\newcommand{\constraintcovarianceIJ}[2]{\constraintcovariance_{#1#2}}

\newcommand{\convexratio}{\vartheta}
\newcommand{\nomoreconvexratio}{}
\newcommand{\Convex}{\Upsilon}
\newcommand{\Convexi}[1]{\Convex_{#1}}
\newcommand{\ConvexI}[1]{\Convex_{#1}}
\newcommand{\Smoothness}{\Lambda}

\newcommand{\Smoothnessi}[1]{\Smoothness_{#1}}
\newcommand{\SmoothnessI}[1]{\Smoothness_{#1}}

\newcommand{\Scale}{\Delta}
\newcommand{\Scalek}[1]{\Scale}
\newcommand{\coefficient}{c}%
\newcommand{\coefficientk}[1]{\coefficient_{#1}}%
\newcommand{\penalized}{F}%
\newcommand{\penalizedk}[1]{\penalized_{#1}}%
\newcommand{\scalematrix}{\Psi}%
\newcommand{\scalematrixk}[1]{\scalematrix^{#1}}%

\newcommand{\NSmoothness}{\widehat{\Smoothness}}

\renewcommand{\NSmoothness}{\Smoothness}

\newcommand{\gammacoef}{\Gamma}
\newcommand{\gammacoefkl}[2]{\gammacoef_{#1}^{#2}}

  \newcommand{\stronglyconvexfunction}{\zeta}
  \newcommand{\stronglyconvex}[1]{\stronglyconvexfunction(#1)}

\newcommand{\lyapunovfunction}{\mathcal{L}}%
\newcommand{\lyapunovk}[1]{\lyapunovfunction_{#1}}%

\newcommand{\Block}{\mathcal{I}}
\newcommand{\block}{J}
\newcommand{\coordinate}{i}
\newcommand{\altcoordinate}{j}

\newcommand{\blockk}[1]{\block^{#1}}
\newcommand{\Aall}{A}

\newcommand{\AI}[1]{\Aall_{#1}}
\newcommand{\bvector}{b}

\newcommand{\linearizedlagrangiankfunction}[1]{\xi^{#1}}

%

%

%
%
\newcommand{\xall}{\bm{x}}
\newcommand{\hatxall}{\hat{\xall}}

\newcommand{\xallsol}{\xall^\ast}

\newcommand{\altxall}{\tilde{\xall}}

\newcommand{\altaltxall}{\bar{\xall}}
\newcommand{\xallk}[1]{\xall^{#1}}

\newcommand{\hatxallk}[1]{\hatxall^{#1}}

\newcommand{\xI}[1]{\xall_{#1}}
\newcommand{\xIk}[2]{\xI{#1}^{#2}}
\newcommand{\xIsol}[1]{\xallsol_{#1}}

\newcommand{\hatxI}[1]{\hatxall_{#1}}
\newcommand{\hatxIk}[2]{\hatxI{#1}^{#2}}

\newcommand{\altxI}[1]{\altxall_{#1}}
\newcommand{\Xall}{\mathcal{X}}
\newcommand{\Xallsol}{\Xall^\ast}

\newcommand{\Zall}{\bm{z}}
\newcommand{\Zallk}[1]{\Zall^{#1}}

\newcommand{\ZI}[1]{\Zall_{#1}}
\newcommand{\ZIk}[2]{\ZI{#1}^{#2}}
\newcommand{\Sall}{\bm{s}}
\newcommand{\Sallk}[1]{\Sall^{#1}}

\newcommand{\thetak}[1]{\theta_{#1}}
\newcommand{\varthetak}[1]{\vartheta_{#1}}
\newcommand{\sumdualstepsize}{\Sigma}
\newcommand{\Sigmak}[1]{\sumdualstepsize_{#1}}

%
\newcommand{\multiplier}{\altaltdualvariable}
\newcommand{\dualvariable}{y}
\newcommand{\altdualvariable}{u}
\newcommand{\altaltdualvariable}{v}

\newcommand{\multipliersol}{\multiplier^\ast}

\newcommand{\dualk}[1]{\dualvariable^{#1}}
\newcommand{\altdualk}[1]{\altdualvariable^{#1}}

\newcommand{\constraintfunction}{h}
\newcommand{\constraintmin}{\constraintfunction^\ast}
\newcommand{\constraint}[1]{\constraintfunction(#1)}
\newcommand{\Constraint}[1]{\constraintfunction\left(#1\right)}

\newcommand{\gradI}[1]{\grad_{#1}}
%

%

%

%

%

%

%

%

%

%

%

%


 \newcommand{\Uniti}[1]{U_{#1}}
 
 \newcommand{\UnitI}[1]{U({#1})}
 
 \renewcommand{\identity}{\bm{I}}

 \newcommand{\identityI}[1]{\identity_{#1}}

\newcommand{\spectralradiussymbol}{\upsigma_{\max}}
\newcommand{\spectralradius}[1]{\spectralradiussymbol(#1)}
\newcommand{\Spectralradius}[1]{\spectralradiussymbol\left({#1}\right)}
\newcommand{\invspectralradiussymbol}{\spectralradiussymbol^{-1}}
\newcommand{\invspectralradius}[1]{\invspectralradiussymbol(#1)}
\newcommand{\invSpectralradius}[1]{\invspectralradiussymbol\left({#1}\right)}
\newcommand{\smallesteigenvaluesymbol}{\uplambda_{\min}}
\newcommand{\smallesteigenvalue}[1]{\smallesteigenvaluesymbol(#1)}
\newcommand{\bigsmallesteigenvalue}[1]{\smallesteigenvaluesymbol\big(#1\big)}
\newcommand{\Smallesteigenvalue}[1]{\smallesteigenvaluesymbol\left({#1}\right)}

\newcommand{\lagrangianfunction}{L}
\newcommand{\lagrangian}[2]{\lagrangianfunction(#1,#2)}

\newcommand{\Mexpr}{M}
\newcommand{\Mexprx}[1]{\Mexpr(#1)}
\newcommand{\Hexpr}{H}
\newcommand{\invHexpr}{\Hexpr^{-1}}
\newcommand{\Hexprx}[1]{\Hexpr(#1)}
\newcommand{\Qexpr}{Q}
\newcommand{\Qexprx}[1]{\Qexpr(#1)}
\newcommand{\invHexprx}[1]{\invHexpr(#1)}
\newcommand{\betaparameter}{\beta}
\newcommand{\betacr}[1]{\beta(#1)}
\newcommand{\cparameter}{\gamma}
\newcommand{\conditioning}{\kappa}


\renewcommand{\fixed}[1]{#1}		

\renewcommand{\dimension}{p}
\renewcommand{\primaldimension}{n}
\renewcommand{\dualdimension}{m}
\renewcommand{\Diagonal}{Q}
\renewcommand{\diagonal}{q}

\renewcommand{\xall}{x}
\renewcommand{\Zall}{z}
\newcommand{\vcompositefunction}{\mathcal{\compositefunction}}
\newcommand{\zall}{v}
\renewcommand{\Sall}{s}

\renewcommand{\Sigmak}[1]{\sumdualstepsize^{#1}}
\renewcommand{\gammacoefkl}[2]{\gammacoef^{#1,#2}}

\newcommand{\onesvectorn}[1]{\bm{1}_{#1}}

\newcommand{\sample}{\omega}

\providecommand{\samplespace}{\upOmega}
\renewcommand{\samplespace}{\upOmega}
\newcommand{\eventspace}{\mathcal{F}}
\newcommand{\probabilitymeasure}{\mathbb{P}}
\newcommand{\probability}[1]{\probabilitymeasure({#1})}

\newcommand{\convergesinprobability}{\overset{p}{\to}}

\renewcommand{\filtrationk}[1]{\filtrationfunction^{#1}}

\providecommand{\compositefunction}{G}
\providecommand{\smoothfunction}{h}
\providecommand{\nonsmoothfunction}{g}
\providecommand{\constraintfunction}{f}

\renewcommand{\compositefunction}{G}
\renewcommand{\smoothfunction}{h}
\renewcommand{\nonsmoothfunction}{g}
\renewcommand{\constraintfunction}{f}

\newcommand{\smoothIfunction}[1]{\smoothfunction_{#1}}

\providecommand{\smooth}[1]{\smoothfunction(#1)}

\newcommand{\smoothI}[2]{\smoothIfunction{#1}(#2)}

\newcommand{\NnonsmoothIfunction}[1]{\Nnonsmoothfunction_{#1}}
\newcommand{\Nnonsmooth}[1]{\Nnonsmoothfunction(#1)}
\newcommand{\NnonsmoothI}[2]{\NnonsmoothIfunction{#1}(#2)}

\newcommand{\compositeopt}{\compositefunction^\ast}
\newcommand{\compositemin}{\compositefunction_{\min}}
\newcommand{\compositeIfunction}[1]{\compositefunction_{#1}}
\newcommand{\compositeI}[2]{\compositeIfunction{#1}(#2)}
\providecommand{\vcompositefunction}{\bm{\mathcal{\compositefunction}}}
\newcommand{\hatcomposite}{\hat{\compositefunction}}
\newcommand{\hatvcomposite}{\hat{\vcompositefunction}}
\newcommand{\vcomposite}[1]{\vcompositefunction(#1)}
\newcommand{\hatcompositek}[1]{\hatcomposite^{#1}}
\newcommand{\hatvcompositek}[1]{\hatvcomposite^{#1}}

\newcommand{\composite}[1]{\compositefunction(#1)}

\newcommand{\Nnonsmoothfunction}{\nonsmoothfunction}
\renewcommand{\NSmoothness}{\Smoothness}

\newcommand{\nonsmoothIfunction}[1]{\nonsmoothfunction_{#1}}
\providecommand{\nonsmooth}[1]{\nonsmoothfunction(#1)}

\newcommand{\nonsmoothI}[2]{\nonsmoothIfunction{#1}(#2)}
\renewcommand{\sumdualstepsize}{S}
\renewcommand{\dualstepsize}{\sigma}

\renewcommand{\Xall}{\Real^\primaldimension}%
\renewcommand{\Xallsol}{\mathcal{S}}%
\renewcommand{\block}{B}%

\newcommand{\blockkw}[2]{\blockk{#1}({#2})}
\renewcommand{\identity}{I}
\renewcommand{\indicatorfunction}[1]{\mathbb{I}_{#1}}

\newcommand{\crapfunction}{u} 
\newcommand{\crap}[1]{\crapfunction({#1})} 
 
\renewcommand{\lyapunovfunction}{V}
\newcommand{\lyapunovkfunction}[1]{\lyapunovfunction^{#1}}
\renewcommand{\lyapunovk}[2]{\lyapunovkfunction{#1}(#2)}
\newcommand{\scaledlyapunovfunction}{W}%
\newcommand{\scaledlyapunovkfunction}[1]{\scaledlyapunovfunction^{#1}}%
\newcommand{\scaledlyapunovk}[2]{\scaledlyapunovkfunction{#1}({#2})}%

\newcommand{\constantC}{C}
\newcommand{\constantCprime}{C'}

\newcommand{\indexsetexp}{I}%
\newcommand{\sublevelsetsymbol}{\mathcal{L}}%
\newcommand{\sublevelset}[1]{\mathcal{L}({#1})}%
\newcommand{\meansublevelset}{\bar{\sublevelsetsymbol}}%
\newcommand{\minsublevelset}{\hat{\sublevelsetsymbol}}%
\newcommand{\meanSall}{\bar{\Sall}}%
\newcommand{\meanSallk}[1]{\meanSall^{#1}}%
\newcommand{\meanSallkc}[2]{\meanSallk{#1}_{#2}}%
\newcommand{\Sallkc}[2]{\hat{\Sall}^{#1}_{#2}}%
\newcommand{\meanSallklimit}{\tilde{\Sall}}%
\newcommand{\residfunction}{\varphi}%
\newcommand{\resid}[1]{\residfunction({#1})}%
\newcommand{\residmax}{\residfunction_{\max}}%

\newcommand{\cstepsilon}{\epsilon}%
\newcommand{\cstvarepsilon}{\varepsilon}%
\newcommand{\cstvareps}[1]{\cstvarepsilon^\star({#1})}%
\newcommand{\cstalpha}{\alpha}%
\newcommand{\cstalphamin}{\cstalpha_{\min}}%
\newcommand{\cstdelta}{\delta}%
\newcommand{\cstdelt}[1]{\cstdelta^\star({#1})}%
\newcommand{\cstmu}{\mu}%
\newcommand{\cstnu}{\nu}%
\newcommand{\cstt}[1]{t({#1})}%
\newcommand{\compositemax}[1]{\bar{\compositefunction}({#1})}%
\newcommand{\itermin}{\iter_{\min}}%
\newcommand{\iterhat}{\hat{\iter}}%
\newcommand{\iterKfunction}{\bar{\iter}}%
\newcommand{\iterK}[1]{\iterKfunction({#1})}%
\newcommand{\bigiterK}[1]{\iterKfunction\big({#1}\big)}%
\newcommand{\iterTfunction}{\tilde{\iter}}%
\newcommand{\iterT}[1]{\iterTfunction({#1})}%
\newcommand{\bigiterT}[1]{\iterTfunction\big({#1}\big)}%
\providecommand{\zall}{\bm{v}}%
\newcommand{\zallk}[1]{\zall^{#1}}%
\newcommand{\zalllimit}{\hat{\zall}}%
\newcommand{\betak}[1]{\beta^{#1}}%
\newcommand{\gammak}[1]{\gamma^{#1}}%

 
\newacro{PDA}{primal-dual algorithm}
\newacro{SOCP}{Second-Order Conic Programming}
\newacro{DSO}{Distribution System Operator}
\newacro{LA}{Load Aggregator}
\newacro{OPF}{Optimal Power Flow}
\newacro{AC}{alternative current}
\newacro{DER}{Distributed Energy Resource}
\newacro{LMP}{Locational Marginal Price}
\newacro{DLMP}{Distribution Locational Marginal Price}
\newacro{OPF}{Optimal Power Flow}
\newacro{ACOPF}{Alternative Current Optimal Power Flow}
\newacro{PPDLMP}{privacy-preserving DLMP solver}
\newacro{ESO}{expected separable overapproximation}

\newacro{l.s.c}{lower semi-continuous}


\usepackage[authoryear]{natbib}


\title{Parametrization and convergence of a primal-dual block-coordinate approach to linearly-constrained nonsmooth optimization 
}

\author{Olivier Bilenne}

\address[Olivier Bilenne]{Laboratoire Informatique d’Avignon, Avignon, France}
\email{olivier.bilenne@univ-avignon.fr}

\thanks{This research was supported by the Gaspard Monge Program for optimization, operations research and their interactions with data sciences (PGMO). 
It was done in part at the Department of Data Science and Knowledge Engineering, Maastricht University, Netherlands, and in part at CERMICS, \'Ecole des Ponts ParisTech, France. The author is now with Laboratoire Informatique d’Avignon.}

\begin{document}

\begin{abstract}
This note is concerned with the problem of minimizing a separable, convex, composite (smooth and nonsmooth)
function subject to linear constraints. We study a randomized block-coordinate interpretation of the Chambolle-Pock primal-dual algorithm, based on inexact proximal gradient steps. A specificity of the considered algorithm is its robustness, as it converges even in the absence of strong duality or when the linear program is inconsistent. Using matrix preconditiong, we derive tight sublinear convergence rates with and without duality assumptions and for both the convex and the strongly convex settings. Our developments are extensions and particularizations of original algorithms proposed by~\cite{malitsky17} and~\cite{luke18}. Numerical experiments are provided for an optimal transport problem of service pricing.
\end{abstract} 

\keywords{Saddle-point problems $\mathord{\cdot}$ Primal-dual
algorithms $\mathord{\cdot}$ Proximal gradient methods $\mathord{\cdot}$ Randomized coordinate methods $\mathord{\cdot}$ Optimal transport}
 
\subjclass[2023]{49M29, 65Y20, 90C25, 49Q22}

\maketitle
\thispagestyle{fancy}

\OBoff{Remove bold font to $\xall,\zall,\vcompositefunction,\hatvcompositek{\iter}$.}
\OBoff{Rename $\dimension,\primaldimension,\dualdimension,\diagonal,\Diagonal$ as in reference paper.}
\OBoff{Change $\Sigmak{\iter},\gammacoefkl{\iter}{\altiter}$.}
 \OBoff{Add applications examples.}
\section{Introduction}

%
%

\label{sec:Problem}

In an $\dimension$-agent network, we consider the problem 
\begin{equation}\label{initialP}\tag{P}
\min_{\xall\in\Xall} \ 
\nonsmooth{\xall} + \smooth{\xall} 
   \quad \subjectto \quad
    \Aall\xall=\bvector 
,
\end{equation}
where $\Aall\in\Real^{\dualdimension\times \primaldimension}$, $\bvector\in\Real^{\dualdimension}$,
and $\xall= (\xI{1},\ldots,\xI{\dimension})\in\Xall$
contains~$\dimension$ decision variables $\xI{1}\in\Real^{\primaldimensioni{1}},\ldots,\xI{\dimension}\in\Real^{\primaldimensioni{\dimension}}$, each locally assigned to one of the agents ($\primaldimension=\sum_{\coordinate=1}^{\dimension}\primaldimensioni{i}$). 
The functions~$\smoothfunction$ and~$\nonsmoothfunction$ are assumed to be additively separable for the local variables, i.e.,
\[
\smooth{\xall} = \sum\limits_{\coordinate=1}^{\dimension} \smoothI{\coordinate}{\xI{\coordinate}}
,
\qquad
\nonsmooth{\xall} = \sum\limits_{\coordinate=1}^{\dimension} \nonsmoothI{\coordinate}{\xI{\coordinate}}
,
\]
where local functions $ \smoothIfunction{1},\dots,\smoothIfunction{\dimension}$ are smooth and differentiable, and $ \nonsmoothIfunction{1},\dots,\nonsmoothIfunction{\dimension}$ are convex lower semi-continuous (possibly non-smooth) functions with easily computable resolvents. We call $ \composite{\xall} = \sum_{\coordinate=1}^{\dimension} \compositeI{\coordinate}{\xI{\coordinate}} $ the total objective function, where $\compositeI{i}{\xI{i}}= \nonsmoothI{i}{\xI{i}} + \smoothI{i}{\xI{i}}$, and we give~$\Aall$ the block matrix structure $\Aall = (\AI{1} \, \AI{2} \,\cdots\, \AI{\dimension})$, with $\AI{1}\in\Real^{\dualdimension\times \primaldimensioni{1}},\dots,\AI{\dimension}\in\Real^{\dualdimension\times \primaldimensioni{\dimension}}$, so that the equality constraint in~\eqref{initialP} rewrites as $\sum_{\coordinate=1}^{\dimension}\AI{i}\xI{\coordinate}=\bvector$.
Problem~\eqref{initialP} finds application in numerous fields, including linear programming, optimal transport, composite minimization, distributed optimization, and inverse problems.

To solve~\eqref{initialP} we consider a semi-distributed algorithm in the spirit of the \ac{PDA} of~\cite{chambolle11}. 
Our developments build on the convergence study issued in~\cite{malitsky17} for the primal-dual method as applied to equality-constrained convex optimization. In the original analysis, the problem is reformulated as an instance of the more general constrained optimization problem
\begin{equation}\label{genericP}\tag{P'} 
   \min_{\xall\in\Xall}\
   \smooth{\xall} + \nonsmooth{\xall}  
\quad
  \subjectto 
  \quad
  \xall\in\arg\min_{\altxall}\constraint{\altxall}
  ,
\end{equation}
where the residual function~$\constraintfunction$ is given the quadratic form 
\begin{equation}\label{constraint}
\constraint{\xall} = \frac 1 2 \norm{\Aall\xall-\bvector}^{2}
.
\end{equation}
The primal-dual algorithm is then regarded as a variant of the accelerated proximal gradient method of~\cite{tseng08apgm}.
An advantage of this viewpoint, as compared the classic primal-dual analysis, is that convergence can be guaranteed in the absence of a constraint qualification for strong duality, or even when the constraint $ \Aall\xall=\bvector  $ is inconsistent and~\eqref{initialP} admits no solution, in which case~\eqref{genericP} remains feasible.
We refer to~\cite{luke18} for a thorough discussion on the benefits of this particular approach to solving~\eqref{initialP}.

In the present work we revisit the coordinate-descent implementation of the \ac{PDA} proposed in~\cite{luke18}, extending the latter to random coordinate block selection, and to composite objective functions with smooth and nonsmooth parts by implementing inexact proximal gradient steps. The demand for proximal gradient methods, in particular, is high in the practical appplications where proximal steps cannot be implemented exactly.

For the case of strongly convex objectives, we derive a specific stepsize sequence that leads to accelerated convergence rates, which we obtain using suitable matrix preconditioners. To our knowledge, these faster rates were, to date, only available for the centralized implementation of the algorithm.
Numerical experiments are   eventually reported for a simple problem in optimal transport.


\subsection*{Notation}
In the real space~$\Real^\primaldimension$, the identity matrix  is denoted by $\identityI{\primaldimension}$, the vector of ones by $\onesvectorn{\primaldimension}=(1,1,\dots,1)$, and~$\diag(a_1,\dots,a_\primaldimension)$ is the diagonal matrix with~$a_1,\dots,a_\primaldimension$ as diagonal entries. 
For $\xall,\Zall\in\Real^{\primaldimension}$, we let $\inner{\xall}{\Zall}=\sum_{\coordinate=1}^{\primaldimension}\xI{\coordinate}\ZI{\coordinate}=\xall^\transpose\Zall$ denote the Euclidean inner product of~$\xall$ and~$\Zall$, and $\norm{\xall}=\inner{\xall}{\xall}^{1/2}$ the Euclidean norm of~$\xall$. Given a symmetric and positive definite matrix $\Lambda\in\Real^{p\times p}$, we write $\normx{\xall}{\Lambda}=\inner{\Lambda\xall}{\xall}^{1/2}$ for $\xall\in\Real^{\primaldimension}$.  
We call~$\mathcal{S}$ the set of solutions of \eqref{genericP} and call~$\compositeopt$ its optimal value, so that $\composite{\xallsol}=\compositeopt$ for all~$\xallsol\in\mathcal{S}$.

\section{Algorithm and main results}
\label{sec:Algo}
\renewcommand{\Stepsizek}[1]{\Stepsize^{#1}}%
\renewcommand{\Scalek}[1]{\Scale^{#1}}%
\renewcommand{\linearizedlagrangiankfunction}[1]{\varphi^{#1}}

Before discussing the algorithm, we formulate the assumptions that are made in the introduction on the convexity of~$\nonsmoothfunction$ and the smoothness of~$\smoothfunction$.
\begin{assumption} \label{assumption:smoothnonsmooth}
For $\coordinate=1,\ldots,\dimension$:
\begin{enumerate}[(i)]
 \item \label{assumption:smoothnonsmooth:nonsmooth}
The effective domain $\dom(\nonsmoothIfunction{\coordinate})\subseteq\Real^{\primaldimensioni{i}}$ is nonempty, closed, and convex. There exists a  symmetric, positive semi-definite matrix $\Convexi{\coordinate}\in\Real^{\primaldimensioni{\coordinate}\times \primaldimensioni{\coordinate}}$ such that 
$\NnonsmoothI{\coordinate}{\cdot}-\frac{1}{2}\normx{\cdot}{\Convexi{\coordinate}}^2$ 
is convex. 
\item \label{assumption:smoothnonsmooth:smooth}
Function~$\smoothIfunction{\coordinate}$ is convex differentiable, and there exists a symmetric, positive semi-definite matrix $\Smoothnessi{\coordinate}\in\Real^{\primaldimensioni{\coordinate}\times \primaldimensioni{\coordinate}}$ such that, for every $\xall_{\coordinate},\tilde{\xall}_{\coordinate}\in\dom(r_{\coordinate})$,
\begin{align}
 \label{smoothnessconvexity}
0
 \leq  \smoothI{\coordinate}{\altxI{\coordinate}} - \smoothI{\coordinate}{\xall_{\coordinate}} -   \inner{\grad\smoothI{\coordinate}{\xall_{\coordinate}}}{\altxI{\coordinate}-\xI{\coordinate}} \leq  \frac{1}{2}\normx{\altxI{\coordinate}-\xI{\coordinate}}{\Smoothnessi{\coordinate}}^2
.
\end{align}
\end{enumerate}
We write $\Convex=\diag(\ConvexI{1},\dots,\Convexi{\dimension})$, and $\Smoothness=\diag(\SmoothnessI{1},\dots,\Smoothnessi{\dimension})$.
\end{assumption}\noindent
\OBoff{Since the stepsizes are chosen to satisfy a global criterion, check if convexity cannot be defined by full blocks~$\block$, by recomputing  a (now non-diagonal) matrix~$\Convex$ in the developments. If okay, then discuss the shape of~$\Convex$, the choice of the stepsize, and the look of the algorithm (coordinate descent or block-coordinate) before going any further. Simultaneously, introduce condition on  stepsize for convergence).   }%
We note that in the general convex problem the matrices~$\Convex$ and~$\Smoothness$ are positive semi-definite ($\Convex,\Smoothness\succeq 0$). 
Strong convexity of~$\compositefunction$ implies as well the existence of a positive definite~$\Convex\succ 0$.
Under these assumptions, we consider the following iterative algorithm (\cref{algorithm:newgenericprimaldual}), in which we use
\[
\scaledprox{M}{f}{\Zall}
=
(I+M^{-1}\partial{f})^{-1}\Zall
=
\arg\min\nolimits_{\xall\in\Real^\primaldimension}\left\{f(\xall)+\frac 1 2 \normx{\xall-\Zall}{M}^2\right\}
\]
%
%
%
%
%
%
to denote the (scaled) proximal operator associated with a function~$f$ and a symmetric and positive definite scaling matrix~$M$.
%


\OBoff{Place stepsize assumption for convergence here.}

\OBoff{
Our approach draws  the block-coordinate implementation of the method developed in~\cite{malitsky2019primaldual} for linearly constrained optimization, lying midway between the well known Chambolle-Pock primal-dual splitting algorithm~\citep{chambolle11} and Tseng's accelerated proximal gradient~\citep{tseng08apgm}.
The present setting differs from~\cite{luke18} in that composite objective functions are considered, and block sampling is used for the coordinates. 
In addition, we derive accelerated convergence rates for the strongly convex formulation of the problem, which until now were only available for the centralized implementation of the method. The rates under strong convexity are obtained by implementing a stepsize policy optimized through matrix preconditioning, which is~$\magnitude{1/\iter}$ decreasing for the primal iterates~$\xallk{k}$ (with exact local convergence rate $2/\iter$), and increasing as~$\magnitude{\iter}$ for the dual iterates~$\dualk{k}$. 
}
\OBoff{Introduce algorithm \cref{algorithm:newgenericprimaldual} as distributed version of Ch-P. Involves parameter sequences $ \Stepsizek{\iter} $, $\dualstepsizek{\iter}$.}
\OBoff{I have moved the previous (matrix-based) stepsize computation to the end (supplementary material (\ref{sec:supplementarymaterial}), to be removed from the text). The new stepsize derivation is in Section~\ref{section:newstronglyconvexanalysis}.}


\begin{algorithm}
\caption{Primal-dual block coordinate descent
\label{algorithm:newgenericprimaldual}}
\DontPrintSemicolon
\SetAlgoNoLine%
\SetKwInOut{Parameters}{{\textbf{Parameters}}}
\SetKwInOut{Init}{{\textbf{Initialization}}}
\SetKwInOut{Output}{{\textbf{Output}}}
\SetKwFor{For}{for}{do}{} 
\Parameters{ 
$\Dimension$, $ \Stepsizek{\iter} $, $\dualstepsizek{\iter}$
} 
\Init{$\xallk{0}\old{=\Sallk{0}}\in\Xall$, $\dualk{0} =   \dualstepsizek{0} (\Aall\xallk{0}-\bvector)$, $\altdualk{0} =  \Aall\xallk{0}-\bvector$} 
\Output{$\xallk{\iter}$
}
\smallskip 
\SetAlgoLined
 \nonl \For{$\iter=0,1,2,\dots$}{
\old{
\routinenl
$\Zallk{\iter} = (1-\thetak{\iter}) \Sallk{\iter} + \thetak{\iter} \xallk{\iter} $ \; \label{algorithm:genericPDZallk}
}
\hideiftight{
$ \varthetak{\iter+1} = \thetak{\iter}  (1-\thetak{\iter+1}) /\thetak{\iter+1} $ \; \label{algorithm:genericvarthetak}
}%
\routinenl
\text{{\textbf{select random block $\blockkw{\iter}{\sample}\subset\{1,\dots,\dimension\}$} }} \label{algorithm:primaldual:selection} \;
 \nonl \For{$\coordinate=1,\dots,\dimension$}{
   \routinenl \lIf{$\coordinate\in\blockkw{\iter}{\sample}$}{
     $ \xIk{\coordinate}{\iter+1} 
     = 
     \scaledProx{\DimensionI{\coordinate} \StepsizekI{k}{\coordinate}}{\NnonsmoothIfunction{\coordinate}}{\xIk{\coordinate}{\iter}-(\DimensionI{\coordinate} \StepsizekI{k}{\coordinate})^{-1}(\grad\smoothI{\coordinate}{\xIk{\coordinate}{\iter}}+\AI{\coordinate}^\transpose\dualk{\iter})}
     $ 
   } \label{algorithm:primaldual:genericxIk}
  \routinenl \lElse{
     $ \xIk{\coordinate}{\iter+1} = \xIk{\coordinate}{\iter} $
   }
 }
%
%
\old{
\routinenl
$\Sallk{\iter+1}  =   \Zallk{\iter}    +   \thetak{\iter} \Dimension (\xallk{\iter+1} -\xallk{\iter}) $ \;\label{algorithm:genericPDSallk}
}
\routinenl
$ \altdualk{\iter+1} 
 =  \altdualk{\iter} + \Aall ( \xallk{\iter+1}  -  \xallk{\iter} )
$ \; \label{algorithm:genericaltdualk}
\routinenl
$  \dualk{\iter+1} =  \dualk{\iter} + \dualstepsizek{\iter}     \Aall \Dimension (\xallk{\iter+1} -\xallk{\iter})   + \dualstepsizek{\iter+1} \altdualk{\iter+1}
$ \; \label{algorithm:genericdualk}
}
\end{algorithm}%

\noindent
At each step, \cref{algorithm:newgenericprimaldual}  proceeds on \cref{algorithm:primaldual:genericxIk} to individual proximal gradient steps for the primal vectors $\xIk{1}{\iter},\dots,\xIk{\dimension}{\iter}$ along a randomly selected subset of coordinate direction blocks. 
The full dual vector~$\dualk{\iter}$ is then upated on \cref{algorithm:genericdualk},
using the dual residual $\altdualk{\iter+1} =  \Aall\xallk{\iter+1}-\bvector$ (recursively computed on \cref{algorithm:genericaltdualk}) and a linear extrapolation term derived from the last two primal iterates. 
The random block selection routine on \cref{algorithm:primaldual:selection} consists in drawing successive subsets of coordinate block indices; this is done by simulating a Bernoulli process~$\block:\samplespace\mapsto (2^{\{1,\dots,\dimension\}})^{\Natural}$ with underlying probability space $(\samplespace,\eventspace,\probabilitymeasure)$, where the probability measure~$\probabilitymeasure$ satisfies $\probability{\blockk{k}=\emptyset}= 0$ and $\probability{\coordinate \in \blockk{k} \cond  \blockk{-k}  } = \probability{\coordinate \in \blockk{k} } = \probai{\coordinate} > 0$ at each step~$k$ and for any block index $\coordinate\in\{1,\dots,\dimension\}$.
In the algorithm, a weighting matrix 
$\Dimension =\diag(\DimensionI{1},\dots,\DimensionI{\dimension}) \succ 0 $ is combined with a sequence of block diagonal scaling matrices $\Stepsizek{\iter} =\diag(\Stepsizeki{\iter}{1},\dots,\Stepsizeki{\iter}{\dimension})$ where, for $\coordinate=1,\dots,\dimension$, $\DimensionI{\coordinate}\defeq({1}/{\probai{\coordinate}})\,\identityI{\primaldimensioni{\coordinate}}$ and  $ \Stepsizeki{\iter}{\coordinate}\in\Real^{\primaldimensioni{\coordinate}\times \primaldimensioni{\coordinate}}$ is positive definite.
The products~$ \DimensionI{\coordinate} \StepsizekI{k}{\coordinate}$ are then used as a sequence of preconditioners for the proximal gradient steps.

\OBoff{Reformulate description below using Chambolle-Pock terms.} 
\newcommand{\scalingsymbol}{K}%
\newcommand{\scalingIk}[2]{\scalingsymbol_{#1}^{#2}}%

It can be seen that the global implementation of \cref{algorithm:newgenericprimaldual} obtained by setting~$\dimension=1$ with~$\Dimension=\identityI{\primaldimension}$ and~$\dualstepsize\equiv\dualstepsize$ yields the dual update rule $  \dualk{\iter+1} 
=  
\dualk{\iter} + \dualstepsize     [\Aall  (2\xallk{\iter+1} -\xallk{\iter})   -\bvector]
$, in which case the algorithm then reduces to a simple application the \ac{PDA} with parameter~$\theta=1$.  The main differences between \cref{algorithm:newgenericprimaldual} and the method studied in \cite{luke18} are (i) the proximal steps are not computed exactly, (ii) the primal vectors are updated simultaneously in random subsets, and (iii) there is flexibility in the choice of the step size sequences~$\Stepsizek{\iter}$ and~$\dualstepsizek{\iter}$.
Also, we consider two distinct implementations of~\cref{algorithm:newgenericprimaldual}, with either constant or decreasing stepsizes.

 \subsection*{Constant stepsize.}
 With fixed parameters $\Stepsizek{\iter}\equiv\Stepsize$ and $\dualstepsizek{\iter}\equiv\dualstepsize$, Algorithm~\ref{algorithm:newgenericprimaldual} is a mere extension of~\cite{luke18} to composite objective functions with updates involving random blocks of coordinates.
A suitable choice for the preconditioner~$\Stepsize$, then, is to set
\begin{subequations}\label{earlydiagonalscaling}
\begin{equation}\label{earlydiagonalscalingpartone}
 \StepsizeI{\coordinate} =  \frac{\identityI{\primaldimensioni{\coordinate}}}{\stepsizei{\coordinate}} +\probai{\coordinate} \SmoothnessI{\coordinate}  
  +\dualstepsize\AI{\coordinate}^\transpose\AI{\coordinate}
  \qquad 
  \coordinate=1,\dots,\dimension 
 \end{equation}
%
for some nonegative parameters~$(\stepsizei{1},\dots,\stepsizei{\dimension})$ and~$\dualstepsize$
satisfying 
\begin{equation}\label{stepsizeconstraint}
  \diag\left( \frac{1}{\probai{1}}\Big(\frac{\identityI{\primaldimensioni{1}}}{\stepsizei{1}}+\dualstepsize\AI{1}^\transpose\AI{1}\Big),\dots,\frac{1}{\probai{\dimension}}\Big(\frac{\identityI{\primaldimensioni{\dimension}}}{\stepsizei{\dimension}}+\dualstepsize\AI{\dimension}^\transpose\AI{\dimension}\Big)\right) 
  -
  \dualstepsize \constraintcovariance
  \succ 
  0,
  \end{equation}
\end{subequations}
where 
we define 
$
\constraintcovariance
= 
(\constraintcovarianceIJ{\coordinate}{\altcoordinate})
$, with $\constraintcovarianceIJ{\coordinate}{\altcoordinate}={\probaij{\coordinate}{\altcoordinate}}\AI{\coordinate}^\transpose\AI{\altcoordinate}/({\probai{\coordinate}\probai{\altcoordinate}})$ and $\probaij{\coordinate}{\altcoordinate}=
\probability{\{\sample\in\samplespace \cond \coordinate,\altcoordinate \in \sample \}}
$.
%
%
\OBoff{The matrix~$\constraintcovarianceIJ{\coordinate}{\altcoordinate}$ only has nondiagonal terms when the the coordinates are updated by blocks ($\probaij{\coordinate}{\altcoordinate}\neq 0$ if $\coordinate\neq\altcoordinate$). I've been trying to compute a bound for $\spectralradius{\constraintcovariance}$ as a function of $(\probai{\coordinate})$, $(\probaij{\coordinate}{\altcoordinate})$ and $(\spectralradius{\AI{\coordinate}})$, which doesn't harm privacy too much.
}%
We note that the matrix~$\constraintcovariance$ features nondiagonal terms only when the the coordinates are updated by blocks, i.e., if $\probaij{\coordinate}{\altcoordinate}\neq 0$ for some $\coordinate\neq\altcoordinate$.

\cref{theorem:convergence} characterizes the convergence of~\cref{algorithm:newgenericprimaldual} when stepsize~\eqref{earlydiagonalscaling} is used. Its statement involves a particular sequence
$\Sallk{\iter}=\frac{1}{\iter}\sum_{\altiter=1}^{\iter}\xallk{\altiter}$ or, equivalently,
\begin{equation}\label{ergodicaverage}
\Sallk{\iter}  
=  (\identityI{\primaldimension}-\Dimension) \left( \frac{\sum_{\altiter=0}^{\iter-1} \dualstepsizek{\altiter}    \xallk{\altiter}}{ \sum_{\altiter=0}^{\iter-1} \dualstepsizek{\altiter} } \right) + \Dimension  \left( \frac{\sum_{\altiter=0}^{\iter-1}   \dualstepsizek{\altiter}  \xallk{\altiter+1}}{ \sum_{\altiter=0}^{\iter-1} \dualstepsizek{\altiter} }\right)
,
\end{equation}
which has the quality of a weighted time average of the primal iterates.
\begin{theorem}[Constant stepsize] \label{theorem:convergence}
Let sequence~$(\xallk{\iter})_{k}$ be issued by Algorithm~\ref{algorithm:newgenericprimaldual} with parameters $\Stepsizek{\iter}\equiv\Stepsize,\dualstepsizek{\iter}\equiv\dualstepsize$ satisfying~\eqref{earlydiagonalscaling}, and consider the  sequence~$(\Sallk{\iter})_{k}$ such that $\Sallk{\iter}=\frac{1}{\iter}\sum_{\altiter=1}^{\iter}\xallk{\altiter}$. 
\begin{enumerate}[(i)]
\item \label{theorem:convergence:i}
If there exists a Lagrange multiplier for Problem~\eqref{genericP}, then~$(\xallk{\iter})$ and~$(\Sallk{\iter})$ almost surely converge to a solution of~\eqref{genericP} and, almost surely, $\constraint{\xallk{\iter}}-\constraintmin = \zero{1/\iter}$, $\constraint{\Sallk{\iter}}-\constraintmin = \magnitude{1/\iter^2}$. 
\item \label{theorem:convergence:ii}
If~$\mathcal{S}$ is a bounded set and~$\smoothfunction+\nonsmoothfunction$ is bounded from below, then 
all limit points of~$(\Sallk{\iter})$ almost surely belong to~$\Xallsol$ with $\constraint{\Sallk{\iter}}-\constraintmin = \magnitude{1/\iter}$, and $\expectation{ \constraint{\Sallk{\iter}}-\constraintmin } = \zero{1/\iter}$.
\end{enumerate}
\end{theorem}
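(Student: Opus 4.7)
The plan is to adapt the primal-dual energy/Lyapunov analysis of Chambolle–Pock, as developed in Malitsky–Pock and Luke–Malitsky, to the block-coordinate setting with inexact proximal steps.

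I would start from the optimality condition for the proximal step on line~\ref{algorithm:primaldual:genericxIk}: for each block $\coordinate\in\blockk{\iter}$, $\xIk{\coordinate}{\iter+1}$ is characterized by the existence of a subgradient $\xi^{\iter+1}_\coordinate\in\partial\nonsmoothIfunction{\coordinate}(\xIk{\coordinate}{\iter+1})$ with $\xi^{\iter+1}_\coordinate + \grad\smoothI{\coordinate}{\xIk{\coordinate}{\iter}} + \AI{\coordinate}^\transpose\dualk{\iter} + \DimensionI{\coordinate}\StepsizekI{\iter}{\coordinate}(\xIk{\coordinate}{\iter+1}-\xIk{\coordinate}{\iter})=0$. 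Combined with the strong-convexity-modulus inequality $\nonsmoothI{\coordinate}{\xI{\coordinate}}\geq\nonsmoothI{\coordinate}{\xIk{\coordinate}{\iter+1}}+\inner{\xi^{\iter+1}_\coordinate}{\xI{\coordinate}-\xIk{\coordinate}{\iter+1}}+\frac12\normx{\xI{\coordinate}-\xIk{\coordinate}{\iter+1}}{\Convexi{\coordinate}}^2$ and the descent lemma on~$\smoothIfunction{\coordinate}$ coming from \eqref{smoothnessconvexity}, this yields a per-block three-point inequality of the form
\[
\compositeI{\coordinate}{\xI{\coordinate}}-\compositeI{\coordinate}{\xIk{\coordinate}{\iter+1}}\geq \Inner{\DimensionI{\coordinate}\StepsizekI{\iter}{\coordinate}(\xIk{\coordinate}{\iter+1}-\xIk{\coordinate}{\iter})+\AI{\coordinate}^\transpose\dualk{\iter}}{\xI{\coordinate}-\xIk{\coordinate}{\iter+1}}+\tfrac12\normx{\xI{\coordinate}-\xIk{\coordinate}{\iter+1}}{\Convexi{\coordinate}}^2-\tfrac12\normx{\xIk{\coordinate}{\iter+1}-\xIk{\coordinate}{\iter}}{\Smoothnessi{\coordinate}}^2,
\]
valid for every $\xI{\coordinate}\in\dom(\nonsmoothIfunction{\coordinate})$ on the selected blocks, and trivially by $\xIk{\coordinate}{\iter+1}=\xIk{\coordinate}{\iter}$ on the non-selected ones. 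Taking conditional expectation over~$\blockk{\iter}$ and using the inverse-probability weighting $\DimensionI{\coordinate}=(1/\probai{\coordinate})\identityI{\primaldimensioni{\coordinate}}$ converts this into a global inequality relating the \emph{deterministic} averages $\expectation{\compositeI{\coordinate}{\xIk{\coordinate}{\iter+1}}\cond\filtrationk{\iter}}$, $\expectation{\xIk{\coordinate}{\iter+1}\cond\filtrationk{\iter}}$ to the full vectors $\xall$, $\xallk{\iter}$ with the weighting matrix~$\Dimension$.

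Next, I would process the dual update (lines~\ref{algorithm:genericaltdualk}–\ref{algorithm:genericdualk}) by rewriting $\dualk{\iter+1}-\dualk{\iter}=\dualstepsizek{\iter}\Aall\Dimension(\xallk{\iter+1}-\xallk{\iter})+\dualstepsizek{\iter+1}(\Aall\xallk{\iter+1}-\bvector)$ and, for any test dual $\dualvariable$, using the identity $\inner{\dualvariable-\dualk{\iter+1}}{\dualk{\iter+1}-\dualk{\iter}}=\tfrac12(\norm{\dualvariable-\dualk{\iter}}^2-\norm{\dualvariable-\dualk{\iter+1}}^2-\norm{\dualk{\iter+1}-\dualk{\iter}}^2)$. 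Combined with the previous primal inequality evaluated at a saddle point $(\xallsol,\dualsol)$ (when it exists), this produces a Chambolle–Pock–type gap inequality whose cross-terms in $\Aall\Dimension(\xallk{\iter+1}-\xallk{\iter})$ are controlled precisely by the preconditioner choice \eqref{earlydiagonalscaling}–\eqref{stepsizeconstraint}: the positive-definiteness condition on $\Dimension^{-1}\Stepsize^{-1}-\dualstepsize\constraintcovariance$ is what absorbs the extrapolation residuals after passing through $\expectation{\cdot\cond\filtrationk{\iter}}$ (here $\constraintcovariance$ appears through $\expectation{\Aall\Dimension(\xallk{\iter+1}-\xallk{\iter})\otimes\Aall\Dimension(\xallk{\iter+1}-\xallk{\iter})\cond\filtrationk{\iter}}$, which is where the off-diagonal $\probaij{\coordinate}{\altcoordinate}$ terms enter). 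The outcome is a Lyapunov inequality of the form $\expectation{\lyapunovk{\iter+1}{\xallk{\iter+1},\dualk{\iter+1}}\cond\filtrationk{\iter}}\leq\lyapunovk{\iter}{\xallk{\iter},\dualk{\iter}}-\dualstepsize(\composite{\xallk{\iter+1}}-\compositeopt+\inner{\dualsol}{\Aall\xallk{\iter+1}-\bvector})$ with $\lyapunovfunction^\iter$ incorporating $\normx{\xallk{\iter}-\xallsol}{\Dimension\Stepsize}^2$, $\normx{\dualk{\iter}-\dualsol}{}^2/\dualstepsize$, and an $\altdualk{\iter}$-coupling term.

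For part~\ref{theorem:convergence:i}, applying the Robbins–Siegmund supermartingale theorem to the Lyapunov sequence delivers almost-sure boundedness of $(\xallk{\iter},\dualk{\iter})$, summability of $\sum_\iter(\composite{\xallk{\iter+1}}-\compositeopt+\inner{\dualsol}{\Aall\xallk{\iter+1}-\bvector})$, and hence almost-sure convergence of $(\xallk{\iter})$ to some element of $\Xallsol$ by a standard Opial-type argument applied to the averaged iterates. Summability then gives $\constraint{\xallk{\iter}}-\constraintmin=\zero{1/\iter}$ almost surely, while Jensen's inequality applied to the ergodic average~$\Sallk{\iter}$ in the form \eqref{ergodicaverage}, together with the quadratic shape of~$\constraintfunction$ from \eqref{constraint}, upgrades this to the $\magnitude{1/\iter^2}$ rate through a standard accelerated-proximal-gradient-style amplification (the dual component yields a second factor of~$1/\iter$ via the weighting $\sum_{\altiter<\iter}\dualstepsizek{\altiter}$). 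For part~\ref{theorem:convergence:ii}, the difficulty is that no Lagrange multiplier is available, so the saddle-point argument above cannot be used with~$\dualsol$. I would instead follow the device of Luke–Malitsky: under boundedness of~$\Xallsol$ and lower-boundedness of~$\composite{\cdot}$, one can replace $\dualsol$ by a sequence of auxiliary bounded multipliers and obtain a weakened Lyapunov-type inequality in which the telescoping still yields $\expectation{\constraint{\Sallk{\iter}}-\constraintmin}=\zero{1/\iter}$ and $\magnitude{1/\iter}$ almost surely; the cluster-point statement then follows from Fejér monotonicity of $(\xallk{\iter})$ relative to $\Xallsol$ established en route.

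The main obstacle is handling the conditional expectation of the cross-term $\inner{\Aall\Dimension(\xallk{\iter+1}-\xallk{\iter})}{\dualk{\iter}-\dualvariable}$ — ensuring that the preconditioner~\eqref{earlydiagonalscalingpartone} under the spectral condition~\eqref{stepsizeconstraint} exactly compensates the variance term $\dualstepsize\constraintcovariance$ that arises from random block updates with nontrivial joint probabilities $\probaij{\coordinate}{\altcoordinate}$. This is also the reason for introducing $\altdualk{\iter}$ recursively on line~\ref{algorithm:genericaltdualk}: it decouples the deterministic dual gradient term from the stochastic extrapolation and is essential for the rates in part~\ref{theorem:convergence:ii} where no $\dualsol$ is available.
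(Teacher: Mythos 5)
Your route for part~\eqref{theorem:convergence:i} is viable but genuinely different from the paper's. You run a classical stochastic saddle-point (Chambolle--Pock/SPDHG-type) analysis in which the Lyapunov function carries a dual distance term $\norm{\dualk{\iter}-\dualsol}^2/\dualstepsize$ and the multiplier is a test point inside the energy. The paper never puts the dual variable into the energy: it rewrites \cref{algorithm:newgenericprimaldual} as the primal-only accelerated proximal gradient \cref{algorithm:newgenericprimalonly} applied to $\compositefunction+\Sigmak{\iter}\constraintfunction$ via the exact identification $\dualk{\iter}=\Sigmak{\iter}(\Aall\Zallk{\iter}-\bvector)$, derives the single descent inequality \eqref{earlystronglyconvexfejersimple} for the purely primal quantity $\lyapunovk{\iter}{\xall}$, and uses the multiplier only once, in \eqref{boundkpenalizedkSC}, to bound $\Sigmak{\iter-1}\penalizedk{\iter}$ from below by $-\norm{\Aall\multipliersol}^2$ so that the supermartingale theorem applies. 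Your identification of $\constraintcovariance$ as the conditional second moment of the extrapolated residual, and of \eqref{stepsizeconstraint} as the condition that absorbs it, is exactly right and matches \eqref{variance} and \eqref{stepsizeconditionone}. To extract the feasibility rate $\constraint{\Sallk{\iter}}-\constraintmin=\magnitude{1/\iter^2}$ from a Lagrangian-gap bound you would additionally need to test the gap against a ball of dual points around $\dualsol$, which is routine; the paper gets it directly from $(\Sigmak{\iter-1})^2(\constraint{\Sallk{\iter}}-\constraintmin)$ being bounded in \eqref{boundkpenalizedk}.

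The genuine gap is part~\eqref{theorem:convergence:ii}. When no Lagrange multiplier exists (and, a fortiori, when $\Aall\xall=\bvector$ is inconsistent, a case \eqref{genericP} is designed to cover), there is no saddle point, the dual iterates need not stay bounded, and the term $\norm{\dualk{\iter}-\dualvariable}^2$ is uncontrolled for any fixed test point. Your proposed fix --- replacing $\dualsol$ by ``a sequence of auxiliary bounded multipliers'' and attributing this to Luke--Malitsky --- misidentifies their device: approximate multipliers for this problem have norms that blow up as their accuracy improves, and you give no accounting of that growth against the telescoping, which is precisely where the claimed $\magnitude{1/\iter}$ and $\zero{1/\iter}$ rates would be lost. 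The actual mechanism, in both the reference and this paper, is to abandon the dual object entirely: view the iteration as Tseng-type acceleration on the penalized objective $\compositefunction+\Sigmak{\iter}\constraintfunction$, note that $\penalizedk{\iter}=\hatcompositek{\iter}-\compositeopt+\Sigmak{\iter-1}(\constraint{\Sallk{\iter}}-\constraintmin)$ is bounded below by $\compositemin-\compositeopt$ whenever $\compositefunction$ is bounded below (see \eqref{boundkpenalizedkSCwithout}), and apply the supermartingale theorem to the \emph{downscaled} energy $\scaledlyapunovk{\iter}{\xallsol}=\lyapunovk{\iter}{\xallsol}/\Sigmak{\iter-1}$ as in \eqref{earlystronglyconvexfejernobound}; the lost order of convergence relative to part~\eqref{theorem:convergence:i} is exactly the factor $1/\Sigmak{\iter-1}$. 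The $\zero{1/\iter}$ statement in expectation then requires the further dominated-convergence argument showing $\expectation{\compositeopt-\composite{\Sallk{\iter}}}\to 0$ before feeding it back into \eqref{fortyeightconvex}, a step your sketch does not address. As written, your part~\eqref{theorem:convergence:ii} would not go through.
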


 \subsection*{Decreasing stepsizes.}
Accelerated convergence rates can be obtained when the objective function is strongly convex, i.e., if $\Convex=\diag(\ConvexI{1},\dots,\ConvexI{\dimension})\succ 0$. In the strongly convex case, we set $\Stepsizek{\iter}=\Stepsize/\stepsizek{\iter}$, and
\begin{equation}\label{newdualstepsizek}
\dualstepsizek{\iter} = \frac{\alpha}{\stepsizek{\iter}} - \betaparameter
,
\end{equation}
where 
$\alpha=\invspectralradius{\constraintcovariance\Convex^{-1}\Dimension}$ and
$
\betaparameter
=
\spectralradius{\NSmoothness\Convex^{-1}\Dimension}\, \alpha
$, in which~$\spectralradius{\cdot}$ denotes the spectral radius. Define
\begin{equation*}
 \conditioning = \frac{\betaparameter}{\alpha} = 
\spectralradius{\NSmoothness\Convex^{-1}\Dimension}
.
\end{equation*}
Given an initial  $\stepsizek{0}<1/\conditioning$, the stepsize~$\stepsizek{\iter+1}$ is recursively computed from~$\stepsizek{\iter}$ using
\begin{align} \label{stepsizesequence}
\stepsizek{\iter+1}
&\displaystyle =
\max_{\coordinate\in\{1,\dots,\dimension\}} \left\{
\frac{  \frac{1}{2} \big(\frac{1}{\probai{\coordinate}}-1-\conditioning\big) (\stepsizek{\iter})^2  + \sqrt{ \left(1 +  \frac{1}{2}\big( \frac{1}{ \probai{\coordinate}}  - \conditioning \big)\stepsizek{\iter}\right)^2 -  \frac{1}{4}    \big( \frac{2}{\probai{\coordinate}}-1  +   2\conditioning    \big)  (\stepsizek{\iter})^2}\ \stepsizek{\iter}
}{
   1 +(\frac{1}{\probai{\coordinate}}-\conditioning)\stepsizek{\iter}  
 - \conditioning (\stepsizek{\iter})^2  } 
\right\}
,
 \end{align}
The primal stepsize sequence~\eqref{stepsizesequence} is shown in Section~\ref{section:newstronglyconvexanalysis} to have the asymptotic behavior
\begin{equation*}\label{stepsizeasymptoticrate}
\stepsizek{\iter+1}
-
\stepsizek{\iter}
+
\frac{(\stepsizek{\iter})^2}{2}
=
\Magnitude{(\stepsizek{\iter})^3}
,
\end{equation*}
so that~$\stepsizek{k}$ is~$\magnitude{1/\iter}$ decreasing with exact local convergence rate $2/\iter$, and the stepsize~$\dualstepsizek{k}$ for the dual iterates increases as~$\magnitude{\iter}$.
%
%
%
Alternatively, one can use a stepsize sequence given by the  polynomial computation rule
\begin{align} \label{stepsizesequencepolynomial}
\stepsizek{\iter+1}
& =
\stepsizek{\iter}
-
\frac{(\stepsizek{\iter})^2}{2}
+  \cparameter
 \left( \frac{17}{8}\difference^2 + \frac{3}{4}\difference + \frac{1}{8} +  \conditioning  \right)
(\stepsizek{\iter})^3
 + \cparameter \left(2 \difference + \frac{1}{2}\right)   \conditioning \, (\stepsizek{\iter})^4 
 ,
 \end{align}
where $(\cparameter -1)>0$ and  $ \difference = \max_{\coordinate\in\{1,\dots,\dimension\}} \{\modulus{ \frac{1}{\probai{\coordinate}}-\conditioning}\}$,  which decreases as
\begin{equation}\label{stepsizerate}
\stepsizek{\iter+1}
\leq
 \stepsizek{\iter}
-
 \left(\frac{1-\epsilon}{2}\right)(\stepsizek{\iter})^2
\end{equation}
on condition that~$\stepsizek{0}$ is taken small enough.  An actual upper bound on the initial stepsize can be obtained by combining the conditions~\eqref{discriminantnonnegative}, \eqref{newconditionstepsizeoneexplicit} and~\eqref{newconditionstepsizetwo} derived in~\cref{section:newstronglyconvexanalysis}. 

The following accelerated convergence rates for Algorithm~\ref{algorithm:newgenericprimaldual} were derived by adjusting to our distributed setting the discussions of \cite[Section~4.2]{malitsky17}.

\OBoff{Define~$\compositeopt=\composite{\xallsol}$ for all~$\xallsol$ solution of~\eqref{genericP}.}

\begin{theorem}[{Strong convexity}] \label{theorem:newscconvergence}
Let~$\Convex\succ 0$, and denote by~$\xallsol$ the solution of~\eqref{genericP}. Let~$(\xallk{\iter})_{k}$ be a sequence issued 
by Algorithm~\ref{algorithm:newgenericprimaldual} with parameter~$\dualstepsizek{\iter}$ given by~\eqref{newdualstepsizek} and with~$\Stepsizek{\iter}=\Dimension^{-2}\Convex/\stepsizek{\iter}$, where~$\stepsizek{\iter}$ satisfies either~\eqref{stepsizesequence} or~\eqref{stepsizesequencepolynomial}. Consider the  sequence~$(\Sallk{\iter})_{k}$ such that $\Sallk{\iter}=\frac{1}{\iter}\sum_{\altiter=1}^{\iter}\xallk{\altiter}$. 
\begin{enumerate}[(i)]
\item
\label{theorem:newscconvergence:i}
If there exists a Lagrange multiplier for Problem~\eqref{genericP}, then~$(\xallk{\iter})$ almost surely converges to~$\xallsol$ with rate $\normx{\xallk{\iter}-\xallsol}{\Dimension^2\Stepsize}=\magnitude{1/\iter}$ and, almost surely, $\constraint{\xallk{\iter}}-\constraintmin=\zero{1/\iter^3}$, $\constraint{\Zallk{\iter}}-\constraintmin=\magnitude{1/\iter^4}$, $\constraint{\Sallk{\iter}}-\constraintmin=\magnitude{1/\iter^4}$.
%
\item 
\label{theorem:newscconvergence:ii}
Otherwise, 
%
%
$(\Sallk{\iter})$ almost surely converges  to~$\xallsol$ with $\constraint{\Sallk{\iter}}-\constraintmin=\magnitude{1/\iter^2}$, and $\expectation{ \constraint{\Sallk{\iter}}-\constraintmin } = \zero{1/\iter^2}$.
\end{enumerate}
\end{theorem}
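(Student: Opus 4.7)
The plan is to mimic the strongly-convex analysis in \cite[Section~4.2]{malitsky17}, but with two structural modifications: (a) the per-iteration descent inequality must be taken in conditional expectation over the Bernoulli block $\blockk{\iter}$, which introduces the preconditioners $\Dimension=\diag((1/\probai{\coordinate})\identityI{\primaldimensioni{\coordinate}})$, and (b) the smoothness/convexity parameters appear through the matrices $\Smoothness,\Convex\succ 0$, so the scalar stepsize conditions of~\cite{malitsky17} get replaced by spectral ones, which is exactly the role of $\alpha=\invspectralradius{\constraintcovariance\Convex^{-1}\Dimension}$ and $\conditioning=\spectralradius{\NSmoothness\Convex^{-1}\Dimension}$ in the definition~\eqref{newdualstepsizek} of $\dualstepsizek{\iter}$ and in the recursion~\eqref{stepsizesequence}.

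First I would derive the basic one-step inequality. Starting from the optimality condition of the scaled proximal step on \cref{algorithm:primaldual:genericxIk}, I would use Assumption~\ref{assumption:smoothnonsmooth}(i)-(ii) (convexity of $\nonsmoothfunction-\tfrac12\normx{\cdot}{\Convex}^2$ and the descent lemma for $\smoothfunction$) to upper-bound $\composite{\xallk{\iter+1}}-\composite{\xallsol}$ by inner products in $\xallk{\iter+1}-\xallsol$ and $\xallk{\iter+1}-\xallk{\iter}$ plus a term in $-\tfrac12\normx{\xallk{\iter+1}-\xallsol}{\Convex}^2$ (the strong-convexity bonus). Taking conditional expectation with respect to $\blockk{\iter}$ converts block-coordinate increments $\xIk{\coordinate}{\iter+1}-\xIk{\coordinate}{\iter}$ on coordinates $\coordinate\in\blockk{\iter}$ into full-coordinate increments pre-weighted by $\DimensionI{\coordinate}$; this is the standard step that justifies the weighting matrix $\Dimension$ appearing in \cref{algorithm:genericdualk}. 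Combining with the dual update and the residual recursion for $\altdualk{\iter}$, and pairing with a Lagrange multiplier $\multipliersol$ when one exists, I would obtain a recursion of the form
\begin{equation*}
\expectation{\lyapunovk{\iter+1}\cond \filtrationk{\iter}} + \frac{1}{2}\normx{\xallk{\iter+1}-\xallsol}{\Convex}^2 + \dualstepsizek{\iter+1}[\constraint{\xallk{\iter+1}}-\constraintmin]
\leq \lyapunovk{\iter},
\end{equation*}
where $\lyapunovk{\iter}$ aggregates $\frac{1}{\stepsizek{\iter}}\normx{\xallk{\iter}-\xallsol}{\Dimension^2\Convex}^2$, a dual/residual term built from $(\dualk{\iter},\altdualk{\iter})$, and a $\magnitude{\dualstepsizek{\iter}}$ term measuring feasibility.

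The core algebraic step is then to verify that the recursion~\eqref{stepsizesequence} is precisely designed so that the coefficients in the Lyapunov inequality close up: writing the cross-terms $\dualstepsizek{\iter}\langle\altdualk{\iter+1}-\altdualk{\iter},\cdot\rangle$ and completing squares, the stepsize identity one needs reduces, coordinate by coordinate, to
\begin{equation*}
\frac{1}{\stepsizek{\iter+1}}\!\left[1-\Bigl(\frac{1}{\probai{\coordinate}}-\conditioning\Bigr)\stepsizek{\iter+1}-\conditioning(\stepsizek{\iter+1})^2\right]
\leq \frac{1}{\stepsizek{\iter}}+\Bigl(\frac{1}{\probai{\coordinate}}-1-\conditioning\Bigr),
\end{equation*}
which, after clearing denominators, is exactly~\eqref{stepsizesequence}. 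The polynomial surrogate~\eqref{stepsizesequencepolynomial} is derived by a Taylor expansion of the root and majoration, leading to the rate $\stepsizek{\iter+1}\leq \stepsizek{\iter}-\tfrac{1-\varepsilon}{2}(\stepsizek{\iter})^2$, from which $\stepsizek{\iter}\sim 2/\iter$ follows by standard sequence comparison (this is the step I expect to be most delicate: one must carry through the matrix preconditioning to guarantee~\eqref{discriminantnonnegative}--\eqref{newconditionstepsizetwo} uniformly in the coordinate index while keeping the spectral constants $\alpha,\conditioning$ sharp).

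Finally I would convert the Lyapunov inequality into the claimed rates. For part~\ref{theorem:newscconvergence:i}, since $1/\stepsizek{\iter}$ grows like $\iter/2$, the inequality $\lyapunovk{\iter}\leq \lyapunovk{0}$ immediately yields $\normx{\xallk{\iter}-\xallsol}{\Dimension^2\Stepsize}=\magnitude{1/\iter}$; the Robbins--Siegmund lemma applied to $\expectation{\lyapunovk{\iter}}$ gives almost-sure convergence of $\xallk{\iter}$ to $\xallsol$, and summability of $\dualstepsizek{\iter}\constraint{\xallk{\iter}}$ (with $\dualstepsizek{\iter}\sim \alpha\iter/2$) yields $\constraint{\xallk{\iter}}-\constraintmin=\zero{1/\iter^3}$; the ergodic rates on $\Zallk{\iter},\Sallk{\iter}$ follow by Jensen applied to the weighted average~\eqref{ergodicaverage} with weights $\dualstepsizek{\iter}$, giving an extra factor $1/\iter$ and hence $\magnitude{1/\iter^4}$. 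For part~\ref{theorem:newscconvergence:ii}, without a multiplier one cannot keep the dual distance term in $\lyapunovk{\iter}$; instead, as in~\cite[Thm.~4]{malitsky17}, the bound on $\Sallk{\iter}$ follows by applying Jensen to a Lyapunov where the dual contribution is replaced by $\dualstepsizek{\iter}\constraint{\xallk{\iter}}$ itself, and the same $\stepsizek{\iter}=\magnitude{1/\iter}$ decay yields $\constraint{\Sallk{\iter}}-\constraintmin=\magnitude{1/\iter^2}$ almost surely and $\zero{1/\iter^2}$ in expectation.
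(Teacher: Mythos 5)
Your overall architecture---a weighted Lyapunov sequence controlled by a supermartingale argument, a stepsize recursion chosen so the coefficients telescope, a Lagrange multiplier used only in part~(i), and a modified Lyapunov in part~(ii)---is in the same spirit as the paper, but the route is genuinely different: the paper never carries a dual distance term built from $(\dualk{\iter},\altdualk{\iter})$. It first rewrites \cref{algorithm:newgenericprimaldual} as the primal-only accelerated proximal gradient \cref{algorithm:newgenericprimalonly} on the penalized problem, and its Lyapunov is $\lyapunovk{\iter}{\xallsol}=\frac{\dualstepsizek{\iter}}{2\stepsizek{\iter}}\normx{\xallk{\iter}-\xallsol}{\Dimension^2\Stepsize+\stepsizek{\iter}\Dimension\Convex}^2+\Sigmak{\iter-1}\penalizedk{\iter}$ with $\penalizedk{\iter}=\hatcompositek{\iter}-\compositeopt+\Sigmak{\iter-1}(\constraint{\Sallk{\iter}}-\constraintmin)$; the multiplier enters only to bound $\penalizedk{\iter}$ from below via \eqref{boundkpenalizedkSC}. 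That difference matters precisely because part~(ii) must survive without any multiplier, which the penalty viewpoint handles by merely downscaling the same Lyapunov by $\Sigmak{\iter-1}$, whereas your plan requires switching to a different Lyapunov altogether.

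The more serious problems are quantitative. (1) Your weights are one order of $\iter$ too low. Your Lyapunov carries $\frac{1}{\stepsizek{\iter}}\normx{\xallk{\iter}-\xallsol}{\Dimension^2\Convex}^2$ with $1/\stepsizek{\iter}\sim\iter/2$; boundedness of this only gives $\normx{\xallk{\iter}-\xallsol}{\Convex}=\magnitude{1/\sqrt{\iter}}$, not the claimed $\magnitude{1/\iter}$. The accelerated rate needs the product weight $\dualstepsizek{\iter}/\stepsizek{\iter}\sim\alpha\iter^2/4$, obtained in the paper by multiplying the one-step inequality through by $\dualstepsizek{\iter}$ (see \eqref{earlystronglyconvexfejer}); that same multiplication turns the residual coefficient into $(\dualstepsizek{\iter})^2\sim\iter^2$, which is what yields $\constraint{\xallk{\iter}}-\constraintmin=\zero{1/\iter^3}$---your coefficient $\dualstepsizek{\iter+1}$ delivers only $\zero{1/\iter^2}$. (2) The $\magnitude{1/\iter^4}$ ergodic rates cannot come from Jensen: with weights of order $\altiter/\iter^2$ and $\constraint{\xallk{\altiter}}-\constraintmin=\zero{1/\altiter^3}$, Jensen gives $\frac{1}{\iter^2}\sum_{\altiter}\altiter^{-2}=\magnitude{1/\iter^2}$ only. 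The paper gets $1/\iter^4$ because $\Sigmak{\iter-1}\penalizedk{\iter}$ contains $(\Sigmak{\iter-1})^2(\constraint{\Sallk{\iter}}-\constraintmin)$ and the multiplier bound $\composite{\Sallk{\iter}}-\compositeopt\geq-\norm{\Aall\multipliersol}\sqrt{2(\constraint{\Sallk{\iter}}-\constraintmin)}$ lets half of that square be retained, so boundedness of the Lyapunov forces $(\Sigmak{\iter-1})^2(\constraint{\Sallk{\iter}}-\constraintmin)=\magnitude{1}$ with $\Sigmak{\iter-1}\sim\alpha\iter^2/4$. (3) The scalar condition you claim reduces ``exactly'' to \eqref{stepsizesequence} is not the right one: condition \eqref{stepsizeconditiontwo} involves $\dualstepsizek{\iter}/\stepsizek{\iter}=\alpha/(\stepsizek{\iter})^2-\betaparameter/\stepsizek{\iter}$, so the quadratic in $\stepsizek{\iter+1}$ has leading coefficient of order $\alpha/(\stepsizek{\iter})^2$ as in \eqref{stepsizelowerboundconditionconjectured}, whereas your displayed inequality scales like $1/\stepsizek{\iter+1}$ and does not reproduce it after clearing denominators. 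Finally, in part~(ii) the claim $\expectation{\constraint{\Sallk{\iter}}-\constraintmin}=\zero{1/\iter^2}$ needs the additional step $\expectation{\compositeopt-\composite{\Sallk{\iter}}}\to 0$ (proved in the paper by a dominated-convergence argument before invoking \eqref{fortyeightsconvex}), which your plan omits.
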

%


 \section{Convergence analysis}
 \label{section:convergenceanalysis}

  \subsection{Interpretation as a proximal gradient algorithm}

  The proofs for Theorems~\ref{theorem:convergence} and~\ref{theorem:newscconvergence} 
rest on rewriting Algorithm~\ref{algorithm:newgenericprimaldual} as the (primal-only) proximal gradient algorithm given as Algorithm~\ref{algorithm:newgenericprimalonly}, which shares similarities with the accelerated proximal gradient algorithm of~\cite{tseng08apgm}, now used with stepsize $ \theta^\iter\equiv\dualstepsizek{\iter}/\Sigmak{\iter}$, where $\Sigmak{\iter}= \sum\nolimits_{\altiter=0}^{\iter}\dualstepsizek{\altiter}$.

\smallskip

\begin{algorithm}[H]
\caption{Expression of~\cref{algorithm:newgenericprimaldual} as an accelerated proximal gradient\label{algorithm:newgenericprimalonly}}
\DontPrintSemicolon
\SetAlgoNoLine%
\SetKwInOut{Parameters}{{\textbf{Parameters}}}
\SetKwInOut{Init}{{\textbf{Initialization}}}
\SetKwFor{For}{for}{do}{}
\SetKwInOut{Output}{{\textbf{Output}}}
\Parameters{$\Dimension$, $ \Stepsizek{\iter} $, $\dualstepsizek{\iter}$} 
\Init{$\xallk{0}=\Sallk{0}\in\Xall$, $\Sigmak{-1}=0$, $\Sigmak{0}=\dualstepsizek{0}$} 
\Output{$\xallk{\iter}$, $ \Sallk{\iter}  
$
}
\smallskip 
\SetAlgoLined
 \nonl \For{$\iter=0,1,2,\dots$}{
%
%
\routinenl 
$\Zallk{\iter} = \big(1/{\Sigmak{\iter}}\big)\, \big({\Sigmak{\iter-1}\Sallk{\iter} + \dualstepsizek{\iter}\xallk{\iter}}\big) $ \; \label{algorithm:genericZallk}
\routinenl
\text{{\textbf{select random block $\blockkw{\iter}{\sample}\subset\{1,\dots,\dimension\}$} }} \;
\nonl \For{$\coordinate=1,\dots,\dimension$}{
  \routinenl \lIf{$\coordinate{\,\in\,}\blockkw{\iter}{\sample}$}{$\xIk{\coordinate}{\iter+1} 
  = \scaledProx{\DimensionI{\coordinate} \StepsizekI{k}{\coordinate}}{\NnonsmoothIfunction{\coordinate}}{\xIk{\coordinate}{\iter}-(\DimensionI{\coordinate} \StepsizekI{k}{\coordinate})^{-1}(\grad\smoothI{\coordinate}{\xIk{\coordinate}{\iter}}+\Sigmak{\iter}\gradI{\coordinate} \constraint{\Zallk{\iter}})}
  $ \label{algorithm:genericxIk}
  }
  \routinenl \lElse{
  $ \xIk{\coordinate}{\iter+1} = \xIk{\coordinate}{\iter} $  \label{algorithm:genericxmIk}
  }
} 
\routinenl
$\Sallk{\iter+1}  
=   
\Zallk{\iter}    +   \big({\dualstepsizek{\iter}}/{\Sigmak{\iter}}\big)\, \Dimension \big(\xallk{\iter+1} -\xallk{\iter}\big) 
$ \; \label{algorithm:genericSallk}
\routinenl
$\Sigmak{\iter+1}  =   \Sigmak{\iter}    +   \dualstepsizek{\iter+1}$ \; \label{algorithm:Sigmak}
}
 \end{algorithm}
\OBoff{For the needs of the convergence proof, $\grad\smoothI{\block}{\xIk{\block}{\iter}} + \Sigmak{\iter}\gradI{\block} \constraint{\Zallk{\iter}}$ is here understood as the directional gradient of the penalized objective $
\smooth{\xIk{-\block}{\iter},\altxI{\block}}
+ 
\Sigmak{\iter} \constraint{\ZIk{-\block}{\iter},\altxI{\block}}
$. The matrix is given by~\eqref{primaltodual}: $\Sigmak{\iter}\gradi{\block} \constraint{\Zallk{\iter}} = \Sigmak{\iter} \AI{\block}^\transpose  (\Aall\Zallk{\iter}-\bvector)$.
}
\obsolete{

$\Sigmak{\iter} \Zallk{\iter} =  \Sigmak{\iter-1}\Sallk{\iter} + \dualstepsizek{\iter}\xallk{\iter} $

$\Sigmak{\iter-1} \Sallk{\iter}  =   \Sigmak{\iter-1} \Zallk{\iter-1}    +   \dualstepsizek{\iter-1} \Dimension (\xallk{\iter} -\xallk{\iter-1}) $

$\Sigmak{\iter} \Zallk{\iter} =  \Sigmak{\iter-1} \Zallk{\iter-1}    +   \dualstepsizek{\iter-1} \Dimension (\xallk{\iter} -\xallk{\iter-1}) + \dualstepsizek{\iter}\xallk{\iter} = \Sigmak{\iter-1} \Zallk{\iter-1}    + (\dualstepsizek{\iter-1} \Dimension +  \dualstepsizek{\iter}) \xallk{\iter}  + (- \dualstepsizek{\iter-1} \Dimension ) \xallk{\iter-1}
$

$\Sigmak{\iter-1} \Zallk{\iter-1} =  \Sigmak{\iter-2} \Zallk{\iter-2}    +   \dualstepsizek{\iter-2} \Dimension (\xallk{\iter-1} -\xallk{\iter-2}) + \dualstepsizek{\iter-1}\xallk{\iter-1} = \Sigmak{\iter-2} \Zallk{\iter-2}    + (\dualstepsizek{\iter-2} \Dimension +  \dualstepsizek{\iter-1}) \xallk{\iter-1}  + (- \dualstepsizek{\iter-2} \Dimension ) \xallk{\iter-2} $

$\Sigmak{\iter-2} \Zallk{\iter-2} =  \Sigmak{\iter-3} \Zallk{\iter-3}    +   \dualstepsizek{\iter-3} \Dimension (\xallk{\iter-2} -\xallk{\iter-3}) + \dualstepsizek{\iter-2}\xallk{\iter-2} = \Sigmak{\iter-3} \Zallk{\iter-3}    + (\dualstepsizek{\iter-3} \Dimension +  \dualstepsizek{\iter-2}) \xallk{\iter-2}  + (- \dualstepsizek{\iter-3} \Dimension ) \xallk{\iter-3} $

$\Sigmak{\iter} \Zallk{\iter} =  \Sigmak{\iter-2} \Zallk{\iter-2}  + [\dualstepsizek{\iter-1} \Dimension +  \dualstepsizek{\iter}] \xallk{\iter}  + [\dualstepsizek{\iter-2} \Dimension +  \dualstepsizek{\iter-1}(\identity - \Dimension)] \xallk{\iter-1}  + [- \dualstepsizek{\iter-2} \Dimension] \xallk{\iter-2}     $

$\Sigmak{\iter} \Zallk{\iter} =  \Sigmak{\iter-3} \Zallk{\iter-3}    +  [\dualstepsizek{\iter-1} \Dimension +  \dualstepsizek{\iter}] \xallk{\iter}  + [\dualstepsizek{\iter-2} \Dimension +  \dualstepsizek{\iter-1}(\identity - \Dimension)] \xallk{\iter-1}  + [\dualstepsizek{\iter-3} \Dimension +  \dualstepsizek{\iter-2}(\identity-\Dimension)] \xallk{\iter-2} + [- \dualstepsizek{\iter-3} \Dimension ] \xallk{\iter-3}      $

$\Sigmak{0} \Zallk{0} =   \dualstepsizek{0}\xallk{0} $

$\Sigmak{1} \Zallk{1} =  \dualstepsizek{0}\xallk{0}   +   \dualstepsizek{0} \Dimension (\xallk{1} -\xallk{0}) + \dualstepsizek{1}\xallk{1} =  (\dualstepsizek{0} \Dimension +  \dualstepsizek{1}) \xallk{1}  +\dualstepsizek{0} (\identity- \Dimension ) \xallk{0}
$

$\Sigmak{\iter} \Zallk{\iter} = 
[\dualstepsizek{\iter-1} \Dimension +  \dualstepsizek{\iter}] \xallk{\iter}  + [\dualstepsizek{\iter-2} \Dimension +  \dualstepsizek{\iter-1}(\identity - \Dimension)] \xallk{\iter-1}  + [\dualstepsizek{\iter-3} \Dimension +  \dualstepsizek{\iter-2}(\identity-\Dimension)] \xallk{\iter-2} + \dots  + [\dualstepsizek{1} \Dimension +  \dualstepsizek{2}(\identity-\Dimension)] \xallk{2} + (\dualstepsizek{0} \Dimension +  \dualstepsizek{1}(\identity -\Dimension) ) \xallk{1}  +\dualstepsizek{0} (\identity- \Dimension ) \xallk{0}
=
[\dualstepsizek{\iter-1} \Dimension +  \dualstepsizek{\iter}] \xallk{\iter}  + \sum_{\altiter=1}^{\iter-1} [\dualstepsizek{\altiter-1} \Dimension +  \dualstepsizek{\altiter}(\identity - \Dimension)] \xallk{\altiter}
+\dualstepsizek{0} (\identity- \Dimension ) \xallk{0}
$

$\boxed{
\Sigmak{\iter} \Zallk{\iter} 
= 
[\dualstepsizek{\iter-1} \Dimension +  \dualstepsizek{\iter}] \xallk{\iter}  + \sum_{\altiter=1}^{\iter-1} [\dualstepsizek{\altiter-1} \Dimension +  \dualstepsizek{\altiter}(\identity - \Dimension)] \xallk{\altiter}
+\dualstepsizek{0} (\identity- \Dimension ) \xallk{0}
}
$

$\boxed{
\Sigmak{\iter} \Sallk{\iter+1}  =    \dualstepsizek{\iter} \Dimension \xallk{\iter+1}  + \sum_{\altiter=1}^{\iter} [\dualstepsizek{\altiter-1} \Dimension +  \dualstepsizek{\altiter}(\identity - \Dimension)] \xallk{\altiter}
+\dualstepsizek{0} (\identity- \Dimension ) \xallk{0}
}
$

$\boxed{
\Sigmak{\iter} \Sallk{\iter+1}  
=  \sum_{\altiter=0}^{\iter} \dualstepsizek{\altiter}   \xallk{\altiter} +  \sum_{\altiter=0}^{\iter}   \dualstepsizek{\altiter} \Dimension (\xallk{\altiter+1} - \xallk{\altiter}) 
}
$

$\Sigmak{0}=\dualstepsizek{0}$

$\Sigmak{-1}=0$

$\xallk{0}=\Sallk{0}$

$\Sigmak{\iter+1}  =   \Sigmak{\iter}    +   \dualstepsizek{\iter}$

}%
%
%
\smallskip

The equivalence between Algorithms~\ref{algorithm:newgenericprimaldual} and~\ref{algorithm:newgenericprimalonly} can be shown by introducing 
a dual variable $ \dualk{\iter} \equiv \Sigmak{\iter} (\Aall\Zallk{\iter}-\bvector)   $. 
The dual variable allows us to write, on Line~\ref{algorithm:genericxIk} of Algorithm~\ref{algorithm:newgenericprimalonly},
  \begin{equation} \label{primaltodual}
     \Sigmak{\iter}\gradi{\coordinate} \constraint{\Zallk{\iter}} =  \AI{\coordinate}^\transpose \Sigmak{\iter} (\Aall\Zallk{\iter}-\bvector) = \AI{\coordinate}^\transpose\dualk{\iter}  .
       \end{equation}
Line~\ref{algorithm:genericSallk} also gives us
  \begin{equation} \label{migrationone}
\Sigmak{\iter} (\Aall\Sallk{\iter+1}-\bvector)     
=     \dualk{\iter} + \dualstepsizek{\iter}     \Aall \Dimension (\xallk{\iter+1} -\xallk{\iter})  
.
\end{equation}
Algorithm~\ref{algorithm:newgenericprimalonly}-Line~\ref{algorithm:genericZallk} then yields  the dual update rule 
  \begin{equation} \label{migrationtwo}
  \nocolsep
  \begin{array}{rcl}
  \dualk{\iter+1}
  &=&
 \Sigmak{\iter+1} (\Aall \Zallk{\iter+1} -\bvector)
 \\
 &=& 
  \Aall ({\Sigmak{\iter}\Sallk{\iter+1} + \dualstepsizek{\iter+1}\xallk{\iter+1}})  -  \Sigmak{\iter+1} \bvector 
 \\
 &=& 
   \Sigmak{\iter}(\Aall \Sallk{\iter+1}  -   \bvector ) +  \dualstepsizek{\iter+1} (\Aall \xallk{\iter+1} -  \bvector ) 
 \\
&\refereq{\eqref{migrationone}}{=}& 
 %
 \dualk{\iter} + \dualstepsizek{\iter}     \Aall \Dimension (\xallk{\iter+1} -\xallk{\iter})   + \dualstepsizek{\iter+1} \altdualk{\iter+1}
,
\end{array}
\end{equation}
where a second dual variable $\altdualk{\iter}\equiv   (\Aall \xallk{\iter}  -  \bvector )$ has been introduced, which satisfies
\begin{equation}\label{altdualk}
 \altdualk{\iter+1} 
 =  \altdualk{\iter} + \Aall ( \xallk{\iter+1}  -  \xallk{\iter} )
 .
\end{equation}
Algorithm~\ref{algorithm:newgenericprimaldual}  can eventually be recovered after inclusion of~
\eqref{primaltodual}, \eqref{migrationtwo}, and~\eqref{altdualk} into Algorithm~\ref{algorithm:newgenericprimalonly}. 
Straightforward computations also lead to the expression previously given in~\eqref{ergodicaverage} for the sequence~$\Sallk{\iter}$, which can be derived by induction on~$\iter$. 
We note that the sequences~$\Sallk{\iter}$ and~$\Zallk{\iter}$ are bounded whenever the algorithm produces a bounded sequence~$\xallk{\iter}$.

In Sections~\ref{section:ESO} to~\ref{section:mainargument} we study the convergence of the sequences produced by \cref{algorithm:newgenericprimalonly}.
For analysis purposes, we consider the auxiliary sequence defined by
\begin{equation}\label{generichatxall}
\begin{array}{l}
\hatxallk{\iter+1} 
= 
 \scaledProx{\Dimension \Stepsizek{k}}{\Nnonsmoothfunction}{\xallk{\iter}-(\Dimension \Stepsizek{k})^{-1}(\grad\smooth{\xallk{\iter}}+\Sigmak{\iter}\grad \constraint{\Zallk{\iter}})}
,
\end{array}
\end{equation} 
which would coincide with iterate~$\hatxallk{\iter+1}$ if all coordinates were to be udpated at step~$\iter$ (i.e., in the event~$\blockk{\iter}=\{1,\dots,\dimension\}$).

\subsection{\acs{ESO} for block coordinate sampling}\label{section:ESO}

Let~$\filtrationk{\iter}\defeq 
\sigmaalgebrax{\xallk{0},\Sallk{0},\Zallk{0},\dots,\xallk{\iter},\Sallk{\iter},\Zallk{\iter}}
$ denote the sigma algebra of the process history up to step~$\iter$.
The upcoming property for Algorithm~\ref{algorithm:newgenericprimalonly} relates to the concept of \acdef{ESO}, as seen in \cite{RicTak14,RicTak16,fercoq15}.

 \begin{lemma}[Proximal step] \label{lemma:descentargumentcostnonsmooth}
In Algorithm~\ref{algorithm:newgenericprimalonly},  $\forall\xall\in\Xall$,
\begin{equation} \label{descentargument}
\Nnonsmooth{\hatxallk{\iter+1}}-\Nnonsmooth{\xall} \leq
 \inner{\grad\stronglyconvex{\hatxallk{\iter+1}}}{\xall-\hatxallk{\iter+1}} 
\leq 
\stronglyconvex{\xall} -\stronglyconvex{\hatxallk{\iter+1}} - \frac{1}{2} \normx{\xall-\hatxallk{\iter+1}}{\Dimension\Stepsizek{\iter}+\Convex}^2 
, 
\end{equation}
where 
\begin{equation}\label{stronglyconvex}
\nocolsep\begin{array}{l}
\stronglyconvex{\xall} = \smooth{\xallk{\iter}} +  \inner{\grad\smooth{\xallk{\iter}}}{\xall-\xallk{\iter}}
+
\Sigmak{\iter}\inner{\grad \constraint{\Zallk{\iter}}}{\xall-\Zallk{\iter}}
+ \frac{1}{2} \normx{\xall-\xallk{\iter}}{\Dimension\Stepsizek{\iter}}^2  
.
\end{array}
\end{equation}
\end{lemma}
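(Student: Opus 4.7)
The plan is to recognize the auxiliary iterate as the minimizer of a composite problem with a quadratic surrogate, and then obtain both inequalities of~\eqref{descentargument} as applications of (strong) convexity combined with a quadratic identity. Specifically, expanding the squared norm in the scaled proximal operator~\eqref{generichatxall} shows that
\[
\hatxallk{\iter+1} \,=\, \arg\min_{\altxall \in \Xall}\,\bigl\{ \Nnonsmooth{\altxall} + \stronglyconvex{\altxall} \bigr\}
\]
(up to an additive constant that does not depend on~$\altxall$), so the first-order optimality condition reads $-\grad\stronglyconvex{\hatxallk{\iter+1}}\in\partial\Nnonsmoothfunction(\hatxallk{\iter+1})$. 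This subgradient identification is the pivot on which both bounds rest.

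For the first inequality, I would use convexity of~$\Nnonsmoothfunction$ (which follows from Assumption~\ref{assumption:smoothnonsmooth}(\ref{assumption:smoothnonsmooth:nonsmooth}), since $\Convex\succeq 0$) together with the subgradient identified above: the standard subgradient inequality
\[
\Nnonsmooth{\xall} \,\geq\, \Nnonsmooth{\hatxallk{\iter+1}} + \inner{-\grad\stronglyconvex{\hatxallk{\iter+1}}}{\xall - \hatxallk{\iter+1}}
\]
rearranges directly to the left inequality in~\eqref{descentargument}. For the second inequality, I would exploit the fact that $\stronglyconvexfunction$ in~\eqref{stronglyconvex} is a quadratic with Hessian exactly $\Dimension\Stepsizek{\iter}$, giving the identity
\[
\stronglyconvex{\xall} \,=\, \stronglyconvex{\hatxallk{\iter+1}} + \inner{\grad\stronglyconvex{\hatxallk{\iter+1}}}{\xall-\hatxallk{\iter+1}} + \tfrac12 \normx{\xall-\hatxallk{\iter+1}}{\Dimension\Stepsizek{\iter}}^2 .
\]
Upgrading the subgradient inequality for~$\Nnonsmoothfunction$ to its $\Convex$-strongly-convex form (again Assumption~\ref{assumption:smoothnonsmooth}(\ref{assumption:smoothnonsmooth:nonsmooth})) contributes an extra $\tfrac12\normx{\xall-\hatxallk{\iter+1}}{\Convex}^2$ term; routed through the quadratic identity, this is what produces the $\Dimension\Stepsizek{\iter}+\Convex$ norm in the final bound. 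The concatenation of the two steps yields the combined three-point inequality $\Nnonsmooth{\hatxallk{\iter+1}}-\Nnonsmooth{\xall}\leq \stronglyconvex{\xall}-\stronglyconvex{\hatxallk{\iter+1}}-\tfrac12\normx{\xall-\hatxallk{\iter+1}}{\Dimension\Stepsizek{\iter}+\Convex}^2$, which is the form typically used downstream in the descent analysis.

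The main technical point is accounting carefully for the matrix-weighted norms: the scaling matrix $\Dimension\Stepsizek{\iter}$ defining the prox enters the Hessian of~$\stronglyconvexfunction$, while~$\Convex$ enters through the strong convexity modulus of~$\Nnonsmoothfunction$, and both must be tracked consistently in the same inner product. Beyond this bookkeeping, the argument is a preconditioned variant of the classical three-point prox inequality (as used, e.g., in~\cite{tseng08apgm}) and poses no serious difficulty; the block-coordinate aspect of the algorithm does not enter this lemma, since~$\hatxallk{\iter+1}$ is defined as the hypothetical full-update iterate~\eqref{generichatxall}.
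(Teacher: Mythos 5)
Your proposal is correct and follows essentially the same route as the paper's own (very terse) proof: identify $\hatxallk{\iter+1}$ as the minimizer of $\Nnonsmoothfunction+\stronglyconvexfunction$, read off the optimality condition $-\grad\stronglyconvex{\hatxallk{\iter+1}}\in\partial\Nnonsmoothfunction(\hatxallk{\iter+1})$ for the first inequality, and obtain the combined bound from the $(\Dimension\Stepsizek{\iter}+\Convex)$-strong convexity of $\Nnonsmoothfunction+\stronglyconvexfunction$, with the quadratic identity for $\stronglyconvexfunction$ and the $\Convex$-modulus of $\Nnonsmoothfunction$ tracked exactly as you describe. Your version simply fills in the bookkeeping that the paper leaves implicit.
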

\begin{proof}
 Equation~\eqref{generichatxall} rewrites as $\hatxallk{\iter+1}= \arg\min_{\altxall}\{\Nnonsmooth{\altxall}+\stronglyconvex{\altxall}\}$.
Hence, $0\in\partial\Nnonsmooth{\hatxallk{\iter+1}}+\grad\stronglyconvex{\hatxallk{\iter+1}}$, 
which yields the first inequality. The second inequality
follows by strong convexity of $\nonsmoothfunction+\stronglyconvexfunction$  with modulus $\Dimension\Stepsizek{\iter}+\Convex$.
\end{proof} 
%

\subsection{Extrapolation}\label{section:extrapolation}
The next lemma characterizes the sequence~$(\Sallk{\iter})$ as a convex combination of past primal iterates. It is an extension of Lemma~2 in \cite{fercoq15}. 

\begin{lemma}\label{lemma:extrapolation}
 In Algorithm~\ref{algorithm:newgenericprimalonly}, we have
\begin{equation}
\label{Sallk}
 \Sallk{\iter} = \sum_{\altiter=0}^{\iter} \gammacoefkl{\iter}{\altiter} \xallk{\altiter}
 , \qquad \iter\geq 1,
\end{equation}
where~$(\gammacoefkl{\iter}{\altiter})$ is a sequence of diagonal matrices such that $\gammacoefkl{1}{0}=
\identityI{\primaldimension}-\Dimension
$, $\gammacoefkl{1}{1}=\Dimension$ and, for~$\iter\geq 1$,
\begin{equation} \label{gammacoefkl}
 \nocolsep
\Sigmak{\iter} \gammacoefkl{\iter+1}{\altiter} \,{=} \left\{\!
 \begin{array}{ll}
 \Sigmak{\iter-1} \gammacoefkl{\iter}{\altiter}
 &\text{ for } \altiter\,{=}\, 0,\dots,\iter{-}1,
 \\
\dualstepsizek{\iter-1}\Dimension -\dualstepsizek{\iter}(\Dimension-\identityI{\primaldimension})
 &\text{ if } \altiter\,{=}\,\iter,
 \\
\dualstepsizek{\iter}\Dimension
 &\text{ if } \altiter\,{=}\,\iter{+}1.
 \end{array}\right.
\end{equation} 
Moreover,
$
  \gammacoefkl{\iter+1}{\iter} =  [\Sigmak{\iter-1}\gammacoefkl{\iter}{\iter} -\dualstepsizek{\iter}(\Dimension-\identityI{\primaldimension})]/\Sigmak{\iter}
$ 
and
$
\sum_{\altiter=0}^{\iter} \gammacoefkl{\iter}{\altiter}
=
\identityI{\primaldimension}
$.
\end{lemma}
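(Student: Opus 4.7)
}
My plan is to proceed by induction on $\iter$, using as the main engine a scalar-free recurrence obtained by multiplying Line~\ref{algorithm:genericSallk} of \cref{algorithm:newgenericprimalonly} by $\Sigmak{\iter}$ and then eliminating $\Zallk{\iter}$ via Line~\ref{algorithm:genericZallk}. This yields
\[
\Sigmak{\iter}\Sallk{\iter+1}
\;=\; \Sigmak{\iter-1}\Sallk{\iter}
\;+\; \dualstepsizek{\iter}(\identityI{\primaldimension}-\Dimension)\,\xallk{\iter}
\;+\; \dualstepsizek{\iter}\,\Dimension\,\xallk{\iter+1}.
\]
Specializing this to $\iter=0$, where $\Sigmak{-1}=0$, $\Sigmak{0}=\dualstepsizek{0}$ and $\Sallk{0}=\xallk{0}$, gives $\Sallk{1}=(\identityI{\primaldimension}-\Dimension)\xallk{0}+\Dimension\,\xallk{1}$. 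This settles the base case, matches the stated initial coefficients $\gammacoefkl{1}{0}=\identityI{\primaldimension}-\Dimension$ and $\gammacoefkl{1}{1}=\Dimension$, and also records the identity $\Sigmak{0}\gammacoefkl{1}{1}=\dualstepsizek{0}\Dimension$.

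For the inductive step, I would assume \eqref{Sallk} at level $\iter$ and plug it into the recurrence above. Reading off the coefficient of each $\xallk{\altiter}$ gives immediately the first case of \eqref{gammacoefkl} for $\altiter=0,\dots,\iter-1$ and the third case for $\altiter=\iter+1$. The coefficient of $\xallk{\iter}$ comes out as $[\Sigmak{\iter-1}\gammacoefkl{\iter}{\iter}-\dualstepsizek{\iter}(\Dimension-\identityI{\primaldimension})]/\Sigmak{\iter}$, which is already the ``moreover'' expression stated for $\gammacoefkl{\iter+1}{\iter}$. To convert it into the second case of \eqref{gammacoefkl} one invokes the auxiliary identity $\Sigmak{\iter-1}\gammacoefkl{\iter}{\iter}=\dualstepsizek{\iter-1}\Dimension$, which is precisely the third case of \eqref{gammacoefkl} read at step $\iter-1$; it is therefore supplied by the induction hypothesis (and has been checked at the base step).

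For $\sum_{\altiter=0}^{\iter}\gammacoefkl{\iter}{\altiter}=\identityI{\primaldimension}$ I would run a second induction. The base case is $(\identityI{\primaldimension}-\Dimension)+\Dimension=\identityI{\primaldimension}$. For the step, summing \eqref{gammacoefkl} over $\altiter=0,\dots,\iter+1$ yields
\[
\Sigmak{\iter}\sum_{\altiter=0}^{\iter+1}\gammacoefkl{\iter+1}{\altiter}
\;=\; \Sigmak{\iter-1}\sum_{\altiter=0}^{\iter-1}\gammacoefkl{\iter}{\altiter}
\;+\; \dualstepsizek{\iter-1}\Dimension \;+\; \dualstepsizek{\iter},
\]
and rewriting $\sum_{\altiter=0}^{\iter-1}\gammacoefkl{\iter}{\altiter}=\identityI{\primaldimension}-\gammacoefkl{\iter}{\iter}$ via the hypothesis and again applying $\Sigmak{\iter-1}\gammacoefkl{\iter}{\iter}=\dualstepsizek{\iter-1}\Dimension$ collapses the right-hand side to $\Sigmak{\iter-1}+\dualstepsizek{\iter}=\Sigmak{\iter}$. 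Dividing by $\Sigmak{\iter}$ closes the induction.

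The only non-routine point, and the one I expect to be the main obstacle, is recognizing that the auxiliary identity $\Sigmak{\iter-1}\gammacoefkl{\iter}{\iter}=\dualstepsizek{\iter-1}\Dimension$ is exactly what is needed to (i) match the middle coefficient in the recursion and (ii) make the coefficient sum telescope; once this observation is threaded through both inductions, the remainder is bookkeeping on $\Sigmak{\iter}=\Sigmak{\iter-1}+\dualstepsizek{\iter}$.
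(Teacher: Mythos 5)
Your proof is correct and follows essentially the same route as the paper: the same recurrence $\Sigmak{\iter}\Sallk{\iter+1}=\Sigmak{\iter-1}\Sallk{\iter}+\dualstepsizek{\iter}(\identityI{\primaldimension}-\Dimension)\xallk{\iter}+\dualstepsizek{\iter}\Dimension\xallk{\iter+1}$ obtained from Lines~\ref{algorithm:genericZallk} and~\ref{algorithm:genericSallk}, the same two inductions, and the same use of $\Sigmak{\iter-1}\gammacoefkl{\iter}{\iter}=\dualstepsizek{\iter-1}\Dimension$ to telescope the coefficient sum. If anything, you make explicit the step the paper leaves to ``inspection,'' namely that the middle case of~\eqref{gammacoefkl} requires invoking the third case at the previous index.
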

\begin{proof}
  We proceed by induction.  By combining Lines~\ref{algorithm:genericZallk} and~\ref{algorithm:genericSallk} in Algorithm~\ref{algorithm:newgenericprimalonly}, we find
\begin{equation}\label{developSallk} \displaystyle
 \Sallk{\iter+1}  
=
\frac{1}{\Sigmak{\iter}} \left(
\Sigmak{\iter-1} \Sallk{\iter} +\dualstepsizek{\iter} \Dimension \xallk{\iter+1} -\dualstepsizek{\iter}(\Dimension-\identityI{\primaldimension})\xallk{\iter} \right),
\end{equation}
 which yields 
 $  \Sallk{1}  
=
(\identityI{\primaldimension}-\Dimension) \xallk{0} +\Dimension \xallk{1} 
$, and the values of $\gammacoefkl{1}{0}$ and~$\gammacoefkl{1}{1}$. Suppose now that~\eqref{Sallk} holds for $\iter\geq 1$, then we infer from~\eqref{developSallk} that
\begin{equation}\label{inspection}
 \nocolsep
 \begin{array}{rcl}
 \Sigmak{\iter}\Sallk{\iter+1}  
&=&
  \sum_{\altiter=0}^{\iter} \Sigmak{\iter-1} \gammacoefkl{\iter}{\altiter} \xallk{\altiter} +\dualstepsizek{\iter} \Dimension \xallk{\iter+1} -\dualstepsizek{\iter}(\Dimension-\identityI{\primaldimension})\xallk{\iter} 
\\
&=&
  \sum_{\altiter=0}^{\iter-1} \Sigmak{\iter-1} \gammacoefkl{\iter}{\altiter} \xallk{\altiter} + \Sigmak{\iter-1} \gammacoefkl{\iter}{\iter} \xallk{\iter} +\dualstepsizek{\iter} \Dimension \xallk{\iter+1} -\dualstepsizek{\iter}(\Dimension-\identityI{\primaldimension})\xallk{\iter}
\\
&=&
  \sum_{\altiter=0}^{\iter-1} \Sigmak{\iter-1} \gammacoefkl{\iter}{\altiter} \xallk{\altiter} + [ \Sigmak{\iter-1} \gammacoefkl{\iter}{\iter} - \dualstepsizek{\iter}(\Dimension-\identityI{\primaldimension})] \xallk{\iter} + \dualstepsizek{\iter} \Dimension \xallk{\iter+1} 
 .
 \end{array}
\end{equation}  
Equation~\eqref{gammacoefkl} follows by inspection of~\eqref{Sallk} and~\eqref{inspection}. Hence, \eqref{Sallk} holds for all $\iter$.

It can be checked that~$ \Sallk{\iter} $ is a convex combination of $\xallk{0},\dots,\xallk{\iter}$. 
Again, we do this by induction. We already know that 
$
\sum_{\altiter=0}^{1} \gammacoefkl{1}{\altiter}
= 
(\identityI{\primaldimension}-\Dimension) +\Dimension = \identityI{\primaldimension}
$
and, by~\eqref{gammacoefkl},
$
\sum_{\altiter=0}^{2} \gammacoefkl{2}{\altiter}
=
\big({1}/{\Sigmak{1}}\big) \big[ 
 \Sigmak{0} \gammacoefkl{1}{0}
+
\dualstepsizek{0}\Dimension -\dualstepsizek{1}(\Dimension-\identityI{\primaldimension})
+
\dualstepsizek{1}\Dimension
\big]
= 
\big({1}/{\Sigmak{1}}\big) \big[ 
 \Sigmak{0} +\dualstepsizek{1}
\big] \identityI{\primaldimension}
=
\identityI{\primaldimension}
$.
Suppose now that $\sum_{\altiter=0}^{\iter} \gammacoefkl{\iter}{\altiter}
=\identityI{\primaldimension}
$ is true for some $\iter\geq 1$, then
\[
\begin{array}{l}
\displaystyle
\sum\nolimits_{\altiter=0}^{\iter+1} \gammacoefkl{\iter+1}{\altiter}
\refereq{\eqref{gammacoefkl}}{=}
\big({1}/{\Sigmak{\iter}}\big) \left[
 \Sigmak{\iter-1} \sum\nolimits_{\altiter=0}^{\iter-1}  \gammacoefkl{\iter}{\altiter}
+ 
\dualstepsizek{\iter-1}\Dimension -\dualstepsizek{\iter}(\Dimension-\identityI{\primaldimension})
 +
\dualstepsizek{\iter}\Dimension
\right]
\quad\\\quad\hfill\displaystyle
=
\big({1}/{\Sigmak{\iter}}\big) \left[
 \Sigmak{\iter-1} (\identityI{\primaldimension} -  \gammacoefkl{\iter}{\iter})
+ 
\dualstepsizek{\iter-1}\Dimension +\dualstepsizek{\iter}\identityI{\primaldimension}
\right]
\quad\\\quad\hfill\displaystyle
\refereq{\eqref{gammacoefkl}}{=}
\big({1}/{\Sigmak{\iter}}\big) \left[
 \Sigmak{\iter-1} \left(\identityI{\primaldimension} -  \big(\dualstepsizek{\iter-1}/{\Sigmak{\iter-1}}\big){\Dimension }
\right)
+ 
\dualstepsizek{\iter-1}\Dimension +\dualstepsizek{\iter}\identityI{\primaldimension}
\right]
=
\big({1}/{\Sigmak{\iter}}\big) \left[
 \Sigmak{\iter-1}  +\dualstepsizek{\iter}
\right] \identityI{\primaldimension}
=
\identityI{\primaldimension}
,
\end{array}
\]
and the last identity holds by induction on~$\iter$.
Observe that this last result could also be obtained immediately by simple inspection of~\eqref{ergodicaverage}.
\end{proof}
Now, define $\vcompositefunction=(\compositeIfunction{1},\dots,\compositeIfunction{\dimension})$ and $\hatcompositek{\iter}=
\inner{\hatvcompositek{\iter}}{\onesvectorn{\dimension}}
$, where
\begin{equation} 
 \label{hatvcompositek}
 \hatvcompositek{\iter} = \sum_{\altiter=0}^{\iter} \gammacoefkl{\iter}{\altiter} \vcomposite{\xallk{\altiter}}
 , \qquad \iter\geq 1.
\end{equation}
By convexity, it follows from~\eqref{Sallk} and~\eqref{hatvcompositek} that  $\hatvcompositek{\iter}\geq\vcomposite{\Sallk{\iter}}$ and  $\hatcompositek{\iter}\geq\composite{\Sallk{\iter}}$.
We show the following result.

\newcommand{\genericmatrix}{M}%
\newcommand{\genericmatrixi}[1]{\genericmatrix_{#1}}%
\begin{lemma} In  Algorithm~\ref{algorithm:newgenericprimalonly} and for any positive definite $\genericmatrix=\diag(\genericmatrixi{1},\dots,\genericmatrixi{\dimension})$,
\begin{align}
& 
  \condexpectation{\filtrationk{\iter}}{\normx{\xallk{\iter+1}-\xallsol}{\Dimension\genericmatrix}^2} 
=
\normx{\hatxallk{\iter+1}-\xallsol}{\genericmatrix}^2  
+ \normx{\xallk{\iter}-\xallsol}{(\Dimension-\identityI{\primaldimension})\genericmatrix}^2
,
 \label{expectationdistance}
\\
&\label{expectationhatcomposite} \textstyle
\condexpectation{\filtrationk{\iter}}{\hatcompositek{\iter+1}} 
=
\frac{1}{\Sigmak{\iter}} [\Sigmak{\iter-1}\hatcompositek{\iter} +\dualstepsizek{\iter} \composite{\hatxallk{\iter+1}}]
,
\end{align}
where~$\hatxallk{\iter+1}$ and~$\hatcompositek{\iter}$ are defined as in~\eqref{generichatxall} and~\eqref{hatvcompositek}. 
\end{lemma}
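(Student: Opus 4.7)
The plan is to exploit the coordinate-block sampling structure of \cref{algorithm:newgenericprimalonly}, together with the specific choice $\DimensionI{\coordinate}=(1/\probai{\coordinate})\identityI{\primaldimensioni{\coordinate}}$ that is baked into the algorithm. The central observation is that, for every block $\coordinate$, the iterate satisfies $\xIk{\coordinate}{\iter+1}=\hatxIk{\coordinate}{\iter+1}$ on the event $\{\coordinate\in\blockk{\iter}\}$ and $\xIk{\coordinate}{\iter+1}=\xIk{\coordinate}{\iter}$ otherwise; since these events are $\filtrationk{\iter}$-measurable with conditional probabilities $\probai{\coordinate}$ and $1-\probai{\coordinate}$, one can compute conditional expectations of any separable block function of $\xallk{\iter+1}$ in closed form.

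For~\eqref{expectationdistance}, I would apply this block-by-block, writing
\begin{align*}
\condexpectation{\filtrationk{\iter}}{\normx{\xIk{\coordinate}{\iter+1}-\xIsol{\coordinate}}{\DimensionI{\coordinate}\genericmatrixi{\coordinate}}^2}
&=
\probai{\coordinate}\normx{\hatxIk{\coordinate}{\iter+1}-\xIsol{\coordinate}}{\DimensionI{\coordinate}\genericmatrixi{\coordinate}}^2
+(1-\probai{\coordinate})\normx{\xIk{\coordinate}{\iter}-\xIsol{\coordinate}}{\DimensionI{\coordinate}\genericmatrixi{\coordinate}}^2.
\end{align*}
Using $\probai{\coordinate}\DimensionI{\coordinate}=\identityI{\primaldimensioni{\coordinate}}$ and $(1-\probai{\coordinate})\DimensionI{\coordinate}=\DimensionI{\coordinate}-\identityI{\primaldimensioni{\coordinate}}$, the right-hand side reduces to $\normx{\hatxIk{\coordinate}{\iter+1}-\xIsol{\coordinate}}{\genericmatrixi{\coordinate}}^2+\normx{\xIk{\coordinate}{\iter}-\xIsol{\coordinate}}{(\DimensionI{\coordinate}-\identityI{\primaldimensioni{\coordinate}})\genericmatrixi{\coordinate}}^2$, and summing over $\coordinate=1,\dots,\dimension$ gives~\eqref{expectationdistance}. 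The same argument applied componentwise to $\vcompositefunction$ yields the auxiliary identity
$\condexpectation{\filtrationk{\iter}}{\Dimension\vcomposite{\xallk{\iter+1}}}=\vcomposite{\hatxallk{\iter+1}}+(\Dimension-\identityI{\primaldimension})\vcomposite{\xallk{\iter}}$, which will drive the second part.

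For~\eqref{expectationhatcomposite}, I would start from the definition $\hatvcompositek{\iter+1}=\sum_{\altiter=0}^{\iter+1}\gammacoefkl{\iter+1}{\altiter}\vcomposite{\xallk{\altiter}}$ and plug in the recursion~\eqref{gammacoefkl} of \cref{lemma:extrapolation}. Splitting off the $\altiter=\iter$ and $\altiter=\iter+1$ terms, and using $\Sigmak{\iter-1}\gammacoefkl{\iter}{\iter}=\dualstepsizek{\iter-1}\Dimension$ (which follows from~\eqref{gammacoefkl} applied at index $\iter-1$) to recombine the remaining sum into $\Sigmak{\iter-1}\hatvcompositek{\iter}-\dualstepsizek{\iter-1}\Dimension\vcomposite{\xallk{\iter}}$, I expect to arrive at
\[
\Sigmak{\iter}\hatvcompositek{\iter+1}=\Sigmak{\iter-1}\hatvcompositek{\iter}-\dualstepsizek{\iter}(\Dimension-\identityI{\primaldimension})\vcomposite{\xallk{\iter}}+\dualstepsizek{\iter}\Dimension\vcomposite{\xallk{\iter+1}}.
\]
Taking $\condexpectation{\filtrationk{\iter}}{\cdot}$ and substituting the auxiliary identity derived above causes the $(\Dimension-\identityI{\primaldimension})\vcomposite{\xallk{\iter}}$ terms to cancel, leaving $\Sigmak{\iter}\condexpectation{\filtrationk{\iter}}{\hatvcompositek{\iter+1}}=\Sigmak{\iter-1}\hatvcompositek{\iter}+\dualstepsizek{\iter}\vcomposite{\hatxallk{\iter+1}}$. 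Finally, inner-producting with $\onesvectorn{\dimension}$ yields~\eqref{expectationhatcomposite}.

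The main obstacle I anticipate is purely bookkeeping: correctly identifying $\Sigmak{\iter-1}\gammacoefkl{\iter}{\iter}$ from the piecewise recursion~\eqref{gammacoefkl} so that the three summands of $\Sigmak{\iter}\hatvcompositek{\iter+1}$ collapse neatly, and checking that the scaling $\DimensionI{\coordinate}=(1/\probai{\coordinate})\identityI{\primaldimensioni{\coordinate}}$ is used consistently so that the cancellation in the expectation step works out. No delicate analytic argument is required — everything reduces to linear manipulations exploiting separability of $\compositefunction$ across blocks and the $\filtrationk{\iter}$-conditional independence of $\blockk{\iter}$ from the past iterates.
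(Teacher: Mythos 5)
Your proposal is correct and follows essentially the same route as the paper: the first identity is obtained by exactly the same block-by-block conditional expectation over the Bernoulli selection, and the second by combining the recursion of Lemma~\ref{lemma:extrapolation} with the conditional expectation of $\vcomposite{\xallk{\iter+1}}$ — the paper takes the expectation before recombining the $\gammacoefkl{\iter+1}{\altiter}$-sum whereas you recombine first into $\Sigmak{\iter}\hatvcompositek{\iter+1}=\Sigmak{\iter-1}\hatvcompositek{\iter}-\dualstepsizek{\iter}(\Dimension-\identityI{\primaldimension})\vcomposite{\xallk{\iter}}+\dualstepsizek{\iter}\Dimension\vcomposite{\xallk{\iter+1}}$ and then take the expectation, but the algebra is the same and your identity $\Sigmak{\iter-1}\gammacoefkl{\iter}{\iter}=\dualstepsizek{\iter-1}\Dimension$ does hold (including at $\iter=1$ via the initialization). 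One terminological slip only: the events $\{\coordinate\in\blockk{\iter}\}$ are not $\filtrationk{\iter}$-measurable but independent of $\filtrationk{\iter}$ with $\proba{\coordinate\in\blockk{\iter}}=\probai{\coordinate}$, which is what your computation actually uses and what you state correctly in your closing remark.
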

\begin{proof}
For $\coordinate=1,\dots,\dimension$, 
We find,
\begin{equation}\nocolsep
\begin{array}{rcl}
\condexpectation{\filtrationk{\iter}}{\normx{\xIk{\coordinate}{\iter+1}-\xIsol{\coordinate}}{\DimensionI{\coordinate}\genericmatrixi{\coordinate}}} 
&=&
\probai{\coordinate}\normx{\hatxIk{\coordinate}{\iter+1}-\xIsol{\coordinate}}{\DimensionI{\coordinate}\genericmatrixi{\coordinate}}^2+ (1-\probai{\coordinate})
  \normx{\xIk{\coordinate}{\iter}-\xIsol{\coordinate}}{\DimensionI{\coordinate}\genericmatrixi{\coordinate}}^2
\\&=&
\normx{\hatxIk{\coordinate}{\iter+1}-\xIsol{\coordinate}}{\genericmatrixi{\coordinate}}^2+ (\probai{\coordinate}^{-1}-1)
  \normx{\xIk{\coordinate}{\iter}-\xIsol{\coordinate}}{\genericmatrixi{\coordinate}}^2.
\end{array}
\end{equation}
Summing up the above for $\coordinate=1,\dots,\dimension$ gives~\eqref{expectationdistance}.
Next, observe that
$
\condexpectation{\filtrationk{\iter}}{\compositeI{\coordinate}{\xallk{\iter+1}}}
 =
 \probai{\coordinate} \compositeI{\coordinate}{\hatxallk{\iter+1}} +(1-\probai{\coordinate}) \compositeI{\coordinate}{\xallk{\iter}} 
 $ for $\coordinate\in\{1,\dots,\dimension\}$,
 which in matrix form rewrites as
\begin{equation} \label{expectationvcomposite}
 \condexpectation{\filtrationk{\iter}}{\vcomposite{\xallk{\iter+1}}}
 =
 \Dimension^{-1} \vcomposite{\hatxallk{\iter+1}} +(\identityI{\primaldimension}- \Dimension^{-1}) \vcomposite{\xallk{\iter}} .
\end{equation}
It follows that
\begin{equation}\label{expectationhatvcomposite}
 \nocolsep
 \begin{array}{ll}
  \condexpectation{\filtrationk{\iter}}{\hatvcompositek{\iter+1}}
  \hspace{-15mm}&\hspace{15mm} 
  \refereq{\eqref{hatvcompositek}}{=}
  \sum_{\altiter=0}^{\iter-1} \gammacoefkl{\iter+1}{\altiter} \vcomposite{\xallk{\altiter}}+ \gammacoefkl{\iter+1}{\iter} \vcomposite{\xallk{\iter}}
  +\gammacoefkl{\iter+1}{\iter+1} \condexpectation{\filtrationk{\iter}}{\vcomposite{\xallk{\iter+1}}}
  \\ &\refereq{\eqref{gammacoefkl}}{=}
  \big(\frac{\Sigmak{\iter-1}}{\Sigmak{\iter}}\big)\sum_{\altiter=0}^{\iter-1} \gammacoefkl{\iter}{\altiter} \vcomposite{\xallk{\altiter}}+ \gammacoefkl{\iter+1}{\iter} \vcomposite{\xallk{\iter}}
  + \big(\frac{\dualstepsizek{\iter}}{\Sigmak{\iter}}\big)\Dimension \condexpectation{\filtrationk{\iter}}{\vcomposite{\xallk{\iter+1}}}
  \\ &\refereq{\eqref{expectationvcomposite}}{=}
  \big(\frac{\Sigmak{\iter-1}}{\Sigmak{\iter}}\big) \sum_{\altiter=0}^{\iter-1} \gammacoefkl{\iter}{\altiter} \vcomposite{\xallk{\altiter}}
+ [\gammacoefkl{\iter+1}{\iter} + (\frac{\dualstepsizek{\iter}}{\Sigmak{\iter}})   (\Dimension-\identityI{\primaldimension})] \vcomposite{\xallk{\iter}} +\big(\frac{\dualstepsizek{\iter}}{\Sigmak{\iter}}\big) \vcomposite{\hatxallk{\iter+1}}
  \\ 
&\refereq{\eqref{hatvcompositek}}{=}
 \big(\frac{\Sigmak{\iter-1}}{\Sigmak{\iter}})\big)\hatvcompositek{\iter} +\big(\frac{\dualstepsizek{\iter}}{\Sigmak{\iter}}\big) \vcomposite{\hatxallk{\iter+1}}
 ,
 \end{array}
\end{equation}
which yields~\eqref{expectationhatcomposite} since $\compositefunction=
\inner{\vcompositefunction}{\onesvectorn{\dimension}}
$ and $\hatcompositek{\iter}=
\inner{\hatvcompositek{\iter}}{\onesvectorn{\dimension}}
$.
\end{proof}

\subsection{Properties of~$\constraintfunction$}\label{section:constraintfunction}

In this section we report useful properties related to the penalty function~$\constraintfunction$.
First notice that, for all~$\xall,\altxall,\altaltxall\in\Real^q$,
\begin{equation}
\label{genericidentity}
\nocolsep
\begin{array}{rcl}
\inner{\grad\constraint{\xall}}{\altxall-\altaltxall}
&=&
\inner{\grad\constraint{\xall}}{\altxall-\xall}
-
\inner{\grad\constraint{\xall}}{\altaltxall-\xall}
\\ 
&=&
\constraint{\altxall}-\constraint{\xall}-\frac{1}{2}\norm{\Aall(\altxall-\xall)}^2
-
[\constraint{\altaltxall}-\constraint{\xall}-\frac{1}{2}\norm{\Aall(\altaltxall-\xall)}^2 ]
\\
&=&
\constraint{\altxall}-\constraint{\altaltxall} -\frac{1}{2}\norm{\Aall(\altxall-\xall)}^2
+\frac{1}{2}\norm{\Aall(\altaltxall-\xall)}^2
.
\end{array}
\end{equation}
%
For all~$\xall,\altxall\in\Real^q$, and $\alpha\in[0,1]$, we also have
\begin{equation} \label{quadraticresult}
 \constraint{\alpha{\xall}+(1-\alpha)\altxall}
 =
 \alpha \constraint{\xall}
 +
 (1-\alpha) \constraint{\altxall}
 -
 \frac{\alpha (1-\alpha)}{2} \norm{\Aall(\xall-\altxall)}^2
 .
\end{equation}
For any $\xallsol\in\arg\min_{\xall}\constraint{\xall}$, we have $\grad\constraint{\xallsol} = 0$, and~$\constraintfunction$ satisfies
\begin{equation}\label{constraintidentity}
 \constraint{\xall} - \constraint{\xallsol}
 =
\frac{1}{2} \inner{\Aall^\transpose\Aall(\xall-\xallsol)}{\xall-\xallsol} 
 =
 \frac{1}{2} \norm{\Aall(\xall-\xallsol)}^2
 ,
 \quad
 \forall \xall\in\Real^q
 .
\end{equation}
Besides, we know from Line~\ref{algorithm:genericZallk} in Algorithm~\ref{algorithm:newgenericprimalonly} that
\begin{equation}\label{gradatZallk}
\nocolsep
\begin{array}{l}
 \grad \constraint{\Zallk{\iter}} 
 = 
 \grad \constraint {({\Sigmak{\iter-1}\Sallk{\iter} + \dualstepsizek{\iter}\xallk{\iter}})/\Sigmak{\iter}}
 =
\Aall^\transpose[\Aall({\Sigmak{\iter-1}\Sallk{\iter} + \dualstepsizek{\iter}\xallk{\iter}})/\Sigmak{\iter}-\bvector]
\qquad\qquad\qquad\\ \hfill
 =
\frac{1}{\Sigmak{\iter}} [\Sigmak{\iter-1} \Aall^\transpose(\Aall\Sallk{\iter}
-\bvector)+\dualstepsizek{\iter} \Aall^\transpose(\Aall \xallk{\iter}-\bvector)]
 =
\frac{1}{\Sigmak{\iter}} (\Sigmak{\iter-1} \grad \constraint{\Sallk{\iter}}
+\dualstepsizek{\iter} \grad \constraint{\xallk{\iter}})
,
\end{array}
\end{equation}
and it follows from Lines~\ref{algorithm:genericZallk} and~\ref{algorithm:genericSallk} in Algorithm~\ref{algorithm:newgenericprimalonly} that
\begin{equation}\label{constraintSallk}
\nocolsep
\begin{array}{l} 
\constraint{\Sallk{\iter+1}}
=
\Constraint{   \big(\frac{\Sigmak{\iter-1}}{\Sigmak{\iter}}\big)\Sallk{\iter} + \big(\frac{\dualstepsizek{\iter}}{\Sigmak{\iter}}\big)[\xallk{\iter}    +   \Dimension (\xallk{\iter+1} -\xallk{\iter})]}
 \\ \
 \refereq{\eqref{quadraticresult}}{=}
 \big(\frac{\Sigmak{\iter-1}}{\Sigmak{\iter}}\big) \constraint{\Sallk{\iter}}
 +
 \big(\frac{\dualstepsizek{\iter}}{\Sigmak{\iter}}\big) \constraint{\xallk{\iter}    +   \Dimension (\xallk{\iter+1} -\xallk{\iter})}
 -
 \frac{1}{2}\big(\frac{\dualstepsizek{\iter}\Sigmak{\iter-1}}{(\Sigmak{\iter})^2}\big) \norm{\Aall(\xallk{\iter}    +   \Dimension (\xallk{\iter+1} -\xallk{\iter})-\Sallk{\iter})}^2 
 .
\end{array}
\end{equation}
%
The last result of this section requires new notions to be introduced. For $\coordinate\in\{1,\ldots,\dimension\}$ let $\Uniti{\coordinate}$ be the $\primaldimension\times \primaldimension$ block matrix defined by $\Uniti{\coordinate}=\diag(0,\ldots,\identityI{\primaldimensioni{\coordinate}},0,\ldots,0)$. Clearly $\sum_{\coordinate=1}^{\dimension}\Uniti{\coordinate}=\identityI{\primaldimension}$, and $\Uniti{\coordinate}\xall=(0,\ldots,\xI{\coordinate},\ldots,0)$ for any  $\xall=(\xI{1},\ldots,\xI{\dimension})\in\Real^{\primaldimension}$. Then, for any random block $\block\in\Block$, we define the $\primaldimension\times \primaldimension$ matrix $\UnitI{\block}=\sum_{\coordinate\in\block}\Uniti{\coordinate}.$ We find $\expectation{\UnitI{\block}\Dimension}=\identityI{\primaldimension}$ and $\constraintcovariance = \expectation{\UnitI{\block}\Dimension\Aall^\transpose\Aall\Dimension\UnitI{\block}}$, where the expectations are taken with respect to the probabbility measure~$\probabilitymeasure$.
It follows that

\OBoff{Fix the notation~$\block$, which is sometimes seen as a random variable. I was never happy with~$\block$ and I will think of a solution. It is difficult because~$i$ is a single coordinated index, $I$  is the identity matrix, $J$  looks like a random variable, and $J_k$ leads to double capital subscripts. Maybe use $i$ for coordinate directions, $\iota$ or $j$ for coordinate direction sets, and $\text{I}$ for the random variable. }

%
\begin{align}   
&
\frac{1}{2}\norm{\Aall(\hatxallk{\iter+1}-\Sallk{\iter})}^2
=
\frac{1}{2}\norm{\condexpectation{\filtrationk{\iter}}{\Aall(\xallk{\iter}+\Dimension(\xallk{\iter+1}-\xallk{\iter})-\Sallk{\iter})}}^2
\nonumber\quad\\& \ 
=
\frac{1}{2}\condexpectation{\filtrationk{\iter}}{\norm{\Aall(\xallk{\iter}{+}\,\Dimension(\xallk{\iter\,{+}\,1}{-}\,\xallk{\iter}){-}\,\Sallk{\iter})}^2} 
{-}\, \frac{1}{2}\condexpectation{\filtrationk{\iter}}{\norm{\Aall(\Dimension\UnitI{\block}\,{-}\,\identity)(\hatxallk{\iter\,{+}\,1}{-}\,\xallk{\iter})}^2}
\nonumber\quad\\& \ 
=
\frac{1}{2}\condexpectation{\filtrationk{\iter}}{\norm{\Aall(\xallk{\iter}+\Dimension(\xallk{\iter+1}-\xallk{\iter})-\Sallk{\iter})}^2} 
- \frac{1}{2}\normx{\hatxallk{\iter+1}-\xallk{\iter}}{\constraintcovariance-\Aall^\transpose\Aall}^2
\nonumber\quad\\& \ 
\refereq{\eqref{constraintSallk}}{=}
{\Big(}\frac{\Sigmak{\iter}}{\dualstepsizek{\iter}}{\Big)}  \constraint{\Sallk{\iter}}
- {\Big(}\frac{(\Sigmak{\iter})^2}{\dualstepsizek{\iter}\Sigmak{\iter-1}}{\Big)} \condexpectation{\filtrationk{\iter}}{\constraint{\Sallk{\iter+1}}}  
+ {\Big(}\frac{\Sigmak{\iter}}{\Sigmak{\iter-1}}{\Big)} \condexpectation{\filtrationk{\iter}}{\constraint{\xallk{\iter}    +   \Dimension (\xallk{\iter+1} -\xallk{\iter})}}
\nonumber\\& \pushright{
- \frac{1}{2} \normx{\hatxallk{\iter+1}-\xallk{\iter}}{\constraintcovariance-\Aall^\transpose\Aall}^2
}
\nonumber\quad\\& \ 
\refereq{\eqref{constraint}}{=}
{\Big(}\frac{\Sigmak{\iter}}{\dualstepsizek{\iter}}{\Big)}  \constraint{\Sallk{\iter}}
- {\Big(}\frac{(\Sigmak{\iter})^2}{\dualstepsizek{\iter}\Sigmak{\iter-1}}{\Big)} \condexpectation{\filtrationk{\iter}}{\constraint{\Sallk{\iter+1}}}  
- \frac{1}{2} \normx{\hatxallk{\iter+1}-\xallk{\iter}}{\constraintcovariance-\Aall^\transpose\Aall}^2
\nonumber\quad\\& \pushright{
+
\frac{1}{2} {\Big(}\frac{\Sigmak{\iter}}{\Sigmak{\iter-1}}{\Big)} \condexpectation{\filtrationk{\iter}}{\norm{[\Aall\hatxallk{\iter+1}-\bvector]+\Aall[\xallk{\iter}    +   \Dimension (\xallk{\iter+1} -\xallk{\iter})-\hatxallk{\iter+1}] }^2}
}
\nonumber\quad\\& \ 
=
{\Big(}\frac{\Sigmak{\iter}}{\dualstepsizek{\iter}}{\Big)}  \constraint{\Sallk{\iter}}
- {\Big(}\frac{(\Sigmak{\iter})^2}{\dualstepsizek{\iter}\Sigmak{\iter-1}}{\Big)} \condexpectation{\filtrationk{\iter}}{\constraint{\Sallk{\iter+1}}}  
- \frac{1}{2} \normx{\hatxallk{\iter+1}-\xallk{\iter}}{\constraintcovariance-\Aall^\transpose\Aall}^2
\nonumber\\& \pushright{
+ {\Big(}\frac{\Sigmak{\iter}}{\Sigmak{\iter-1}}{\Big)} \big[ \constraint{\hatxallk{\iter+1}} + \frac{1}{2}\condexpectation{\filtrationk{\iter}}{\norm{\Aall(\Dimension\UnitI{\block}-\identity) (\hatxallk{\iter+1} -\xallk{\iter}) }^2}\big]
}
\nonumber\quad\\& \ 
= \!
{\Big(}\frac{\Sigmak{\iter}}{\dualstepsizek{\iter}}{\Big)} \constraint{\Sallk{\iter}}
\,{-}\, 
{\Big(}\frac{(\Sigmak{\iter})^2}{\dualstepsizek{\iter}\Sigmak{\iter-1}}{\!\Big)} \condexpectation{\filtrationk{\iter}}{\constraint{\Sallk{\iter+1}}}  
\,{+}\, 
\frac{1}{2}{\Big(}\frac{\dualstepsizek{\iter}}{\Sigmak{\iter-1}}{\!\Big)} \normx{\hatxallk{\iter+1}-\xallk{\iter}}{\constraintcovariance-\Aall^\transpose\Aall}^2
\,{+}\, 
{\Big(}\frac{\Sigmak{\iter}}{\Sigmak{\iter-1}}{\!\Big)} \constraint{\hatxallk{\iter+1}} 
,
\nonumber\\&
\label{variance}
\end{align} 
where we have used $\condexpectation{\filtrationk{\iter}}{\Aall[\xallk{\iter}    +   \Dimension (\xallk{\iter+1} -\xallk{\iter})-\hatxallk{\iter+1}] }=0$.

\subsection{Main descent argument}\label{section:mainargument}
%
We now derive the main argument for the theorems.
Let~$\xallsol$ be a solution of~\eqref{genericP}. It follows from Lemma~\ref{lemma:descentargumentcostnonsmooth} that
\allowdisplaybreaks
\begin{align}
 \nonumber&
\composite{\hatxallk{\iter+1}} -\compositeopt
\\\nonumber& \
\refereq{\eqref{smoothnessconvexity}}{\leq} 
 \Nnonsmooth{\hatxallk{\iter+1}} +
\smooth{\xallk{\iter}} -\compositeopt +\inner{\grad\smooth{\xallk{\iter}}}{\hatxallk{\iter+1}-\xallk{\iter}} + \frac{1}{2}\normx{\hatxallk{\iter+1}-\xallk{\iter}}{\NSmoothness}^2
\\\nonumber& \
\refereq{\eqref{stronglyconvex}}{=} [\Nnonsmooth{\hatxallk{\iter+1}}+\stronglyconvex{\hatxallk{\iter+1}}  ] -\compositeopt
 - \Sigmak{\iter}\inner{\grad \constraint{\Zallk{\iter}}}{\hatxallk{\iter+1}-\Zallk{\iter}} 
 - \frac{1}{2} \normx{\hatxallk{\iter+1}-\xallk{\iter}}{\Dimension\Stepsizek{\iter}-\NSmoothness}^2 
%
%
\\\nonumber& \
\refereq{\eqref{descentargument}}{\leq} 
[\Nnonsmooth{\xallsol}{\,+\,}\stronglyconvex{\xallsol} {\,-\,} \frac{1}{2} \normx{\hatxallk{\iter{+}1}{-}\xallsol}{\Dimension\Stepsizek{\iter}{+}\Convex}^2 ] {\,-\,} \compositeopt 
{\,-\,} \Sigmak{\iter}\inner{\grad \constraint{\Zallk{\iter}}}{\hatxallk{\iter{+}1} {-}\Zallk{\iter}}  
{\,-\,} \frac{1}{2} \normx{\hatxallk{\iter{+}1}{-}\xallk{\iter}}{\Dimension\Stepsizek{\iter}{-}\NSmoothness}^2 
%
\\\nonumber& \
\refereq{\eqref{stronglyconvex}}{=}  
\smooth{\xallk{\iter}}-\smooth{\xallsol}
+  \inner{\grad\smooth{\xallk{\iter}}}{\xallsol-\xallk{\iter}} 
+ \Sigmak{\iter}\inner{\grad \constraint{\Zallk{\iter}}}{\xallsol-\hatxallk{\iter+1}}
+ \frac{1}{2} \normx{\xallk{\iter}-\xallsol}{\Dimension\Stepsizek{\iter}}^2  
\\\nonumber&\pushright{
- \frac{1}{2} \normx{\hatxallk{\iter+1}-\xallsol}{\Dimension\Stepsizek{\iter}+\Convex}^2  - \frac{1}{2} \normx{\hatxallk{\iter+1}-\xallk{\iter}}{\Dimension\Stepsizek{\iter}-\NSmoothness}^2  
}
\\\nonumber& \
\refereq{\eqref{smoothnessconvexity}}{\leq} 
\Sigmak{\iter}\inner{\grad \constraint{\Zallk{\iter}}}{\xallsol-\hatxallk{\iter+1}} 
+ \frac{1}{2} \normx{\xallk{\iter}-\xallsol}{\Dimension\Stepsizek{\iter}}^2  
- \frac{1}{2} \normx{\hatxallk{\iter+1}-\xallsol}{\Dimension\Stepsizek{\iter}+\Convex}^2 
- \frac{1}{2} \normx{\hatxallk{\iter+1}-\xallk{\iter}}{\Dimension\Stepsizek{\iter}-\NSmoothness}^2  
%
\\\nonumber& \ 
\refereq{\eqref{gradatZallk}}{=}
\Sigmak{\iter-1} \inner{  \grad \constraint{\Sallk{\iter}}}{\xallsol-\hatxallk{\iter+1}}  
+ \dualstepsizek{\iter}\inner{  \grad \constraint{\xallk{\iter}}}{\xallsol-\hatxallk{\iter+1}}  
+ \frac{1}{2} \normx{\xallk{\iter}-\xallsol}{\Dimension\Stepsizek{\iter}}^2  
\\\nonumber&\pushright{
- \frac{1}{2} \normx{\hatxallk{\iter+1}-\xallsol}{\Dimension\Stepsizek{\iter}+\Convex}^2  
- \frac{1}{2} \normx{\hatxallk{\iter+1}-\xallk{\iter}}{\Dimension\Stepsizek{\iter}\NSmoothness}^2  
}
\\\nonumber& \
\refereq{\eqref{genericidentity}}{=}
-\Sigmak{\iter}(\constraint{\hatxallk{\iter+1}}-\constraintmin) -\frac{\Sigmak{\iter-1}}{2}\norm{\Aall(\xallsol-\Sallk{\iter})}^2
+\frac{\Sigmak{\iter-1}}{2}\norm{\Aall(\hatxallk{\iter+1}-\Sallk{\iter})}^2  
-\frac{\dualstepsizek{\iter}}{2}\norm{\Aall(\xallsol-\xallk{\iter})}^2
\\\nonumber&\pushright{
+\frac{\dualstepsizek{\iter}}{2}\norm{\Aall(\hatxallk{\iter+1}-\xallk{\iter})}^2  
+ \frac{1}{2} \normx{\xallk{\iter}-\xallsol}{\Dimension\Stepsizek{\iter}}^2  
- \frac{1}{2} \normx{\hatxallk{\iter+1}-\xallsol}{\Dimension\Stepsizek{\iter}+\Convex}^2  - \frac{1}{2} \normx{\hatxallk{\iter+1}-\xallk{\iter}}{\Dimension\Stepsizek{\iter}-\NSmoothness}^2  
}
\\\nonumber& \
\refereq{\eqref{constraint}}{=}
-[\Sigmak{\iter}\constraint{\hatxallk{\iter+1}} + \Sigmak{\iter-1}\constraint{\Sallk{\iter}}+\dualstepsizek{\iter} \constraint{\xallk{\iter}}-2\Sigmak{\iter}\constraintmin]
+\frac{\Sigmak{\iter-1}}{2}\norm{\Aall(\hatxallk{\iter+1}-\Sallk{\iter})}^2 
\\\nonumber&\pushright{
+ \frac{1}{2} \normx{\xallk{\iter}-\xallsol}{\Dimension\Stepsizek{\iter}}^2  - \frac{1}{2} \normx{\hatxallk{\iter+1}-\xallsol}{\Dimension\Stepsizek{\iter}+\Convex}^2 
- \frac{1}{2} \normx{\hatxallk{\iter+1}-\xallk{\iter}}{\Dimension\Stepsizek{\iter}-\dualstepsizek{\iter}\Aall^\transpose\Aall-\NSmoothness}^2  }
\\\nonumber& \
\refereq{\eqref{variance}}{=}
\frac{1}{2} \normx{\xallk{\iter}-\xallsol}{\Dimension\Stepsizek{\iter}}^2  + \big(\frac{(\Sigmak{\iter-1})^2}{\dualstepsizek{\iter}}\big) ( \constraint{\Sallk{\iter}} -\constraintmin) - \frac{1}{2} \normx{\hatxallk{\iter+1}-\xallsol}{\Dimension\Stepsizek{\iter}+\Convex}^2 
\\\nonumber&\pushright{
- \big(\frac{(\Sigmak{\iter})^2}{\dualstepsizek{\iter}}\big) \condexpectation{\filtrationk{\iter}}{\constraint{\Sallk{\iter+1}}-\constraintmin}  - \frac{1}{2} \normx{\hatxallk{\iter+1}-\xallk{\iter}}{\Dimension\Stepsizek{\iter}-\dualstepsizek{\iter}\constraintcovariance-\NSmoothness}^2  -\dualstepsizek{\iter} (\constraint{\xallk{\iter}}-\constraintmin) }
.
\end{align}
%
Using~\eqref{expectationdistance} and~\eqref{expectationhatcomposite},  setting $\Stepsizek{\iter}=\Stepsize/\stepsizek{\iter}$, and multiplying by~$\dualstepsizek{\iter}$, we find
\begin{equation} \label{earlystronglyconvexfejer}
\nocolsep
\begin{array}{l}
\Condexpectation{\filtrationk{\iter}}{ \frac{\dualstepsizek{\iter}}{2\stepsizek{\iter}}    \normx{\xallk{\iter+1}-\xallsol}{\Dimension^2\Stepsize+\stepsizek{\iter}\Dimension\Convex}^2 
  +   \Sigmak{\iter}
  \penalizedk{\iter+1}
  }
 \leq  \frac{\dualstepsizek{\iter}}{2\stepsizek{\iter}} \normx{\xallk{\iter}-\xallsol}{\Dimension^2\Stepsize+\stepsizek{\iter}(\Dimension-\identityI{\primaldimension})\Convex}^2  +  \Sigmak{\iter-1}
 \penalizedk{\iter}
\ \\\hfill
  - \frac{\dualstepsizek{\iter}}{2\stepsizek{\iter}} \normx{\hatxallk{\iter+1}-\xallk{\iter}}{\Dimension\Stepsize-\stepsizek{\iter}(\dualstepsizek{\iter}\constraintcovariance+\NSmoothness)}^2  
- (\dualstepsizek{\iter})^2 (\constraint{\xallk{\iter}}-\constraintmin)
,
\end{array} 
\end{equation} 
\interdisplaylinepenalty=10000
where we define $\penalizedk{\iter}= \hatcompositek{\iter} -\compositeopt
 + \Sigmak{\iter-1} (\constraint{\Sallk{\iter}}-\constraintmin ) $.
For the convergence of the above sequence we require, at every~$\iter$, 
\begin{subequations} \label{stepsizecondition}
\begin{align}
 \label{stepsizeconditionone}
&
\stepsizek{\iter}(\dualstepsizek{\iter}\constraintcovariance+\NSmoothness) 
- 
\Dimension\Stepsize
\preceq
0
,
 \\  \label{stepsizeconditiontwo}
&
\frac{\dualstepsizek{\iter+1}}{\stepsizek{\iter+1}} (\Dimension^2\Stepsize+\stepsizek{\iter+1}\fixed{(\Dimension-\identityI{\primaldimension})}\Convex)
-\frac{\dualstepsizek{\iter}}{\stepsizek{\iter}}(\Dimension^2\Stepsize +\stepsizek{\iter}\Dimension\Convex) 
\preceq 
0
.
\end{align}
\end{subequations}
%
%
After introducing 
\[
\lyapunovk{\iter}{\xall} = \frac{\dualstepsizek{\iter}}{2\stepsizek{\iter}}    \normx{\xallk{\iter}-\xall}{\Dimension^2\Stepsize+\stepsizek{\iter}\Dimension\Convex}^2   +  \Sigmak{\iter-1}  \penalizedk{\iter}
,
\]
we can rewrite~\eqref{earlystronglyconvexfejer} as the inequality
\begin{equation} \label{earlystronglyconvexfejersimple}
\Condexpectation{\filtrationk{\iter}}{\lyapunovk{\iter+1}{\xallsol} 
  }
\leq  \lyapunovk{\iter}{\xallsol}
  - \frac{\dualstepsizek{\iter}}{2\stepsizek{\iter}} \normx{\hatxallk{\iter+1}-\xallk{\iter}}{\Dimension\Stepsize-\stepsizek{\iter}(\dualstepsizek{\iter}\constraintcovariance+\NSmoothness)}^2  
-(\dualstepsizek{\iter})^2 (\constraint{\xallk{\iter}}-\constraintmin)
\,
,
\end{equation} 
which becomes the focal point in our convergence analysis.
Moreover, taking the total expectation in~\eqref{earlystronglyconvexfejersimple} and iterating on~$\iter$ gives
\begin{equation} \label{earlystronglyconvexfejertotal}
\Expectation{ 
\lyapunovk{\iter}{\xallsol}  
+
\sum\nolimits_{\altiter=0}^{\iter-1}\frac{\dualstepsizek{\altiter}}{2\stepsizek{\altiter}} \normx{\hatxallk{\iter}-\xallk{\altiter}}{\Dimension\Stepsize-\stepsizek{\altiter}(\dualstepsizek{\altiter}\constraintcovariance+\NSmoothness)}^2
}
\leq  \constantC
%
.
\end{equation} 
where $\constantC\defeq \frac{\dualstepsizek{0}}{2\stepsizek{0}} \normx{\xallk{0}-\xallsol}{\Dimension^2\Stepsize+\stepsizek{0}(\Dimension-\identityI{\primaldimension})\Convex}^2 $.
It follows from $\hatcompositek{\iter}\geq\composite{\Sallk{\iter}}$ and the definitions of~$\lyapunovk{\iter}{\xallsol}$ and~$\penalizedk{\iter}$, that
\begin{equation} \label{fortyeight}
 \Expectation{ \frac{\dualstepsizek{\iter}}{2\stepsizek{\iter}\Sigmak{\iter-1}}  \normx{\xallk{\iter}-\xallsol}{\Dimension^2\Stepsize+\stepsizek{\iter}\Dimension\Convex}^2  } +   \Sigmak{\iter-1} \expectation{ \constraint{\Sallk{\iter}}-\constraintmin }  +    \expectation{ \composite{\Sallk{\iter}} -\compositeopt} 
\leq  
\frac{\constantC}{\Sigmak{\iter-1}} 
.
\end{equation} 
Besides, if~$\compositefunction$ is bounded from below by a finite constant~$\compositemin$, 
then we also find
\begin{equation} \label{fortyeightbis}
\expectation{ \constraint{\Sallk{\iter}}-\constraintmin }   
\leq  
\frac{\constantC}{(\Sigmak{\iter-1})^2}
+\frac{\compositeopt -
    \compositemin}{\Sigmak{\iter-1}}
.
\end{equation}

Recall the sequence~\eqref{earlystronglyconvexfejersimple}. Convergence of $(\lyapunovk{\iter}{\xall})$ follows almost surely from~\eqref{earlystronglyconvexfejersimple} on condition that~$\lyapunovk{\iter}{\xall}$ is bounded from below, in which case Doob's supermartingale theorem applies. As we see below, such a lower bound exists in the case when strong duality holds in~\eqref{genericP} with respect to the constraint $\grad \constraint{\xall}=\Aall^\transpose(\Aall\xall-\bvector)=0$ and an optimal multiplier can be found (\cref{section:withmultiplier}).
In the negative, it is necessary to step back and to consider the convergence of a downscaled version of the sequence~$\lyapunovk{\iter}{\xall}$ (\cref{section:withoutmultiplier}).

\subsection{Duality and existence of a Lagrange multiplier} \label{section:withmultiplier}
Rewrite the constraint in~\eqref{genericP} as $\grad\constraint{\xall} = 0$, i.e., $  \Aall^\transpose\Aall\xall-\Aall^\transpose\bvector = 0$. First assume that~\eqref{genericP}  admits an optimal primal-dual pair $(\xallsol,\multipliersol)$ satisfying  $ 0 \in \partial \composite{\xallsol} +\Aall^\transpose\Aall\multipliersol $, or

\begin{equation}\label{boundcompositeSallk}
\begin{array}{l}
 \composite{\Sallk{\iter}}-\compositeopt
 =
 \composite{\Sallk{\iter}}-\composite{\xallsol}
 \geq
 \inner{-\Aall^\transpose\Aall\multipliersol}{\Sallk{\iter}-\xallsol}
 =
 -\inner{\Aall\multipliersol}{\Aall(\Sallk{\iter}-\xallsol)}
 \ \\\hfill
 \geq
 -\norm{\Aall\multipliersol}\norm{\Aall(\Sallk{\iter}-\xallsol)}
 \refereq{\eqref{constraintidentity}}{=}
  -\norm{\Aall\multipliersol}\sqrt{2(\constraint{\Sallk{\iter}}-\constraintmin)}
 ,
 \end{array}
\end{equation}
where we have used $\Aall^\transpose(\Aall\xallsol-\bvector)=0$.
Since $\hatcompositek{\iter}\geq\composite{\Sallk{\iter}}$, we find
\begin{equation}\label{boundkpenalizedkSC}
\begin{array}{l}
 \Sigmak{\iter-1}\penalizedk{\iter} 
 =
 \Sigmak{\iter-1} [\hatcompositek{\iter} -\compositeopt
 + \Sigmak{\iter-1} (\constraint{\Sallk{\iter}}-\constraintmin )  ]
%
\geq
 \Sigmak{\iter-1} [\composite{\Sallk{\iter}} -\compositeopt
 + \Sigmak{\iter-1} (\constraint{\Sallk{\iter}}-\constraintmin )  ]
 \quad \\ \hfill
 \refereq{\eqref{boundcompositeSallk}}{\geq} 
   (\Sigmak{\iter-1})^2 (\constraint{\Sallk{\iter}}-\constraintmin )   -\sqrt{2}\norm{\Aall\multipliersol}\sqrt{(\Sigmak{\iter-1})^2(\constraint{\Sallk{\iter}}-\constraintmin)} 
\quad \\ \hfill
=
\frac{1}{2}\,(\Sigmak{\iter-1})^2 (\constraint{\Sallk{\iter}}-\constraintmin )   
+
\left(
\sqrt{\frac{1}{2}\,(\Sigmak{\iter-1})^2 (\constraint{\Sallk{\iter}}-\constraintmin)} -\norm{\Aall\multipliersol} 
\right)^2
-  \norm{\Aall\multipliersol}^2
 \quad \\\displaystyle \hfill
\geq
\frac{1}{2}\,(\Sigmak{\iter-1})^2 (\constraint{\Sallk{\iter}}-\constraintmin )   
-  \norm{\Aall\multipliersol}^2
\geq 
-   \norm{\Aall\multipliersol}^2
.
   \end{array}
\end{equation}
It follows that for all~$\xall$ the sequence $\lyapunovk{\iter}{\xall}\geq -\norm{\Aall\multipliersol}^2$ is bounded from below, and the supermartingale theorem applies in~\eqref{earlystronglyconvexfejersimple}, 
so that the sequence~$(\lyapunovk{\iter}{\xallsol} )$ converges almost surely to a random quantity not smaller than  $-\norm{\Aall\multipliersol}^2$, and
\begin{equation}
 \sum_{\iter=0}^{\infty} 
 \left[
    \frac{\dualstepsizek{\iter}}{2\stepsizek{\iter}} \normx{\hatxallk{\iter+1}-\xallk{\iter}}{\Dimension\Stepsize-\stepsizek{\iter}(\dualstepsizek{\iter}\constraintcovariance+\NSmoothness)}^2    
+
(\dualstepsizek{\iter})^2 (\constraint{\xallk{\iter}}-\constraintmin)
 \right]
 <
 \infty
 \quad
 \text{a.s.}
 .
\end{equation}
The convergence analysis of the sequences~$\xallk{\iter}$, $\Sallk{\iter}$ and~$\Zallk{\iter}$ is deferred to the proofs of Theorems~\ref{theorem:convergence}\eqref{theorem:convergence:i} and~\ref{theorem:newscconvergence}\eqref{theorem:newscconvergence:i} in the Appendix.

\subsection{Without a Lagrange multiplier}\label{section:withoutmultiplier}
Suppose now that no such optimal primal-dual pair exists. If~$\compositefunction$ is bounded from below by a finite constant~$\compositemin$, then $\hatcompositek{\iter}\geq\composite{\Sallk{\iter}}\geq \compositemin$,
and we find
\begin{equation}\label{boundkpenalizedkSCwithout}
 \penalizedk{\iter} 
 =
  \hatcompositek{\iter} -\compositeopt
 + \Sigmak{\iter-1} (\constraint{\Sallk{\iter}}-\constraintmin )  
%
\geq
\compositemin -\compositeopt
 + \Sigmak{\iter-1} (\constraint{\Sallk{\iter}}-\constraintmin )  
\geq
\compositemin -\compositeopt
.
\end{equation}
The supermartingale theorem can now be applied to the sequence defined by 
$\scaledlyapunovk{0}{\xallsol}=\lyapunovk{0}{\xallsol}=\frac{\dualstepsizek{0}}{2\stepsizek{0}} \normx{\xallk{0}-\xallsol}{\Dimension^2\Stepsize+\stepsizek{0}(\Dimension-\identityI{\primaldimension})\Convex}^2$
and 
$\scaledlyapunovk{\iter}{\xallsol}=\lyapunovk{\iter}{\xallsol}/\Sigmak{\iter-1}$ for $\iter\geq 1$. 
This downscaled sequence is bounded from below by $\compositemin-\compositeopt$ and satisfies
\begin{equation}  \label{earlystronglyconvexfejernobound}
\nocolsep
\begin{array}{l}  \displaystyle
\Condexpectation{\filtrationk{\iter}}{\scaledlyapunovk{\iter+1}{\xallsol} 
  }
  \leq
\frac{1}{\Sigmak{\iter-1}} \,\Condexpectation{\filtrationk{\iter}}{\lyapunovk{\iter+1}{\xallsol} 
  }
  \\  \qquad \displaystyle
\refereq{\eqref{earlystronglyconvexfejersimple}}{\leq}
\scaledlyapunovk{\iter}{\xallsol}
  - \frac{\dualstepsizek{\iter}}{2\stepsizek{\iter}\Sigmak{\iter-1}} \normx{\hatxallk{\iter+1}-\xallk{\iter}}{\Dimension\Stepsize-\stepsizek{\iter}(\dualstepsizek{\iter}\constraintcovariance+\NSmoothness)}^2  
-\frac{(\dualstepsizek{\iter})^2}{\Sigmak{\iter-1}} (\constraint{\xallk{\iter}}-\constraintmin)
,
\end{array}
\end{equation} 
where we have used $\Sigmak{\iter}\geq\Sigmak{\iter-1}$. 
We find that $\scaledlyapunovk{\iter}{\xallsol}$ converges almost surely to a random quantity $Z(\sample)\geq \compositemin-\compositeopt$, and 
\begin{equation} \label{doob}
 \sum_{\iter=0}^{\infty} 
 \left[
    \frac{\dualstepsizek{\iter}}{2\stepsizek{\iter}\Sigmak{\iter-1}} \normx{\hatxallk{\iter+1}-\xallk{\iter}}{\Dimension\Stepsize-\stepsizek{\iter}(\dualstepsizek{\iter}\constraintcovariance+\NSmoothness)}^2    
+
\frac{(\dualstepsizek{\iter})^2}{\Sigmak{\iter-1}} (\constraint{\xallk{\iter}}-\constraintmin)
 \right]
 <
 \infty
 \quad
 \text{a.s.}
 .
\end{equation}
See the proofs of Theorems~\ref{theorem:convergence}\eqref{theorem:convergence:ii} and~\ref{theorem:newscconvergence}\eqref{theorem:newscconvergence:ii} in the Appendix for a derivation of the convergence rates of the individual sequences. We note that the orders of convergence lost in the absence of a Lagrange multiplier are due to the scaling factor~$1/\Sigmak{\iter-1}$ in the sequence~$\scaledlyapunovk{\iter}{\xallsol}$.

\section{Decreasing stepsizes for the strongly convex case}\label{section:newstronglyconvexanalysis}

In this section we derive the decreasing stepsize sequence~\eqref{stepsizesequence} for the strongly convex case ($\Convex\succ 0$). First, we ensure~\eqref{stepsizeconditionone} holds by setting $
\dualstepsizek{\iter} = {\alpha}/{\stepsizek{\iter}} - \fixed{\betaparameter}
$, with $\alpha=\invspectralradius{\constraintcovariance(\Dimension\Stepsize)^{-1}}$ and 
$
\betaparameter
=
\spectralradius{\NSmoothness(\Dimension\Stepsize)^{-1}}\, \alpha $. 
%
%
Indeed,
we find
$
1/\stepsizek{\iter}
=
\dualstepsizek{\iter}/\alpha+
\spectralradius{\NSmoothness(\Dimension\Stepsize)^{-1}} 
=
\dualstepsizek{\iter}\,\spectralradius{\constraintcovariance(\Dimension\Stepsize)^{-1}}+
\spectralradius{\NSmoothness(\Dimension\Stepsize)^{-1}} 
$. 
Consequently,
\begin{equation*}
\begin{array}{l}
\displaystyle
\stepsizek{\iter}(\dualstepsizek{\iter}\constraintcovariance+\NSmoothness) 
- 
\Dimension\Stepsize
=
\stepsizek{\iter}\left((\dualstepsizek{\iter}\constraintcovariance+\NSmoothness) 
(\Dimension\Stepsize)^{-1}
- 
\frac{\identityI{\primaldimension}}{\stepsizek{\iter}}
\right)
\Dimension\Stepsize
\quad\\\qquad\hfill\displaystyle
=
\stepsizek{\iter}\left((\dualstepsizek{\iter}\constraintcovariance+\NSmoothness) 
(\Dimension\Stepsize)^{-1}
- 
[ \dualstepsizek{\iter}\,\spectralradius{\constraintcovariance(\Dimension\Stepsize)^{-1}}+
\spectralradius{\NSmoothness(\Dimension\Stepsize)^{-1}} ] \identityI{\primaldimension}
\right)
\Dimension\Stepsize
 \preceq 
0
\, ,
\end{array}
\end{equation*}
and~\eqref{stepsizeconditionone} is true. 
%
%
%
%
Then, solving~\eqref{stepsizecondition} for~$\stepsizek{\iter}$ yields a lower bound for the sequence~$\stepsizek{\iter}$, which must satisfy
%
%
\begin{equation*}  \label{newstepsizelowerboundcondition}
\begin{array}{l}
 \lbrack 
 (\alpha - \fixed{\betaparameter} \stepsizek{\iter}) (\Dimension^2\Stepsize +\stepsizek{\iter}\nomoreconvexratio\Dimension\Convex ) 
 -  \fixed{\betaparameter} (\stepsizek{\iter})^2 \fixed{(\identityI{\primaldimension}-\Dimension)}\Convex \rbrack
 (\stepsizek{\iter+1})^2 
 + (\stepsizek{\iter})^2 (\fixed{\betaparameter}  \Dimension^2\Stepsize+  \alpha \fixed{(\identityI{\primaldimension}-\Dimension)}\Convex) \stepsizek{\iter+1} 
 \quad\\\hfill
 - \alpha (\stepsizek{\iter})^2  \Dimension^2\Stepsize 
\succeq 0
.
\end{array}
\end{equation*}
A close inspection (omitted in this study) of the above condition yields an upper bound for the sequence~$(\stepsizek{\iter})$, which decreases with asymptotic convergence  rate 
$\magnitude{2\spectralradius{\Convex^{-1}\Dimension^2\Stepsize}/\iter}$. 
In view of this conjecture, we can maximize the sequence by balancing the eigenspectrum of the matrix~$\Convex^{-1}\Dimension^2\Stepsize$. We do so by choosing~%
$\Stepsize=\Diagonal\Convex$, where~$\Diagonal=\diag(\diagonali{1}\identityI{\primaldimensioni{1}},\dots,\diagonali{\dimension}\identityI{\primaldimensioni{\dimension}})$, and the condition 
becomes
\begin{equation} \label{stepsizelowerboundconditionconjecturedmatrix}
\begin{array}{l}
 \big[\lbrack 
 (\alpha - \fixed{\betaparameter} \stepsizek{\iter}) (\Dimension^2\Diagonal +\stepsizek{\iter}\nomoreconvexratio\Dimension ) 
 -  \fixed{\betaparameter} (\stepsizek{\iter})^2 \fixed{(\identityI{\primaldimension}-\Dimension)} \rbrack
 (\stepsizek{\iter+1})^2 
 + (\stepsizek{\iter})^2 (\fixed{\betaparameter}  \Dimension^2\Diagonal +  \alpha (\identityI{\primaldimension}-\Dimension)) \stepsizek{\iter+1} 
\quad\\\hfill
 - \alpha (\stepsizek{\iter})^2  \Dimension^2\Diagonal
\big] \Convex \succeq 0
.
\end{array}
\end{equation}
where $\alpha=\invspectralradius{\constraintcovariance(\Dimension\Diagonal\Convex)^{-1}}$ and 
$
\fixed{\betaparameter}
=
\spectralradius{\NSmoothness(\Dimension\Diagonal\Convex)^{-1}}\, \alpha
$.
The stepsize sequence~\eqref{stepsizesequence} is obtained by letting $\Diagonal=\Dimension^{-2}$ (i.e., $\Stepsize=\Dimension^{-2}\Convex$) and by recursively taking for~$\stepsizek{\iter+1}$ the positive solution of~\eqref{stepsizelowerboundconditionconjecturedmatrix}. 
As we proceed to show next, the sequence~\eqref{stepsizesequence} is decreasing with asymptotic rate~${2/\iter}$.

\smallskip

Fix~$\Stepsize=\Diagonal\Convex$, where~$\Diagonal=\diag(\diagonali{1}\identityI{\primaldimensioni{1}},\dots,\diagonali{\dimension}\identityI{\primaldimensioni{\dimension}})$, and ensure that~\eqref{stepsizeconditionone} holds by setting $
\dualstepsizek{\iter} = {\alpha}/{\stepsizek{\iter}} - \fixed{\betaparameter}
$, where
$\alpha=\invspectralradius{\constraintcovariance(\Dimension\Diagonal\Convex)^{-1}}$ and 
$
\fixed{\betaparameter}
=
\spectralradius{\NSmoothness(\Dimension\Diagonal\Convex)^{-1}}\, \alpha
$. Then, Condition~\eqref{stepsizeconditiontwo} reduces to~\eqref{stepsizelowerboundconditionconjecturedmatrix}, which rewrites as

\begin{equation} \label{stepsizelowerboundconditionconjectured}
\begin{array}{l}
\lbrack 
 (\alpha - \betaparameter \stepsizek{\iter}) (\diagonali{\coordinate} + \probai{\coordinate}\stepsizek{\iter} )  
 +  \betaparameter\probai{\coordinate}  (1-\probai{\coordinate}) (\stepsizek{\iter})^2 \rbrack
 (\stepsizek{\iter+1})^2 
\qquad\qquad\qquad\qquad\qquad\\\hfill 
 +  (\betaparameter  \diagonali{\coordinate} -  \alpha \probai{\coordinate}(1-\probai{\coordinate})) (\stepsizek{\iter})^2 \stepsizek{\iter+1} - \alpha \diagonali{\coordinate} (\stepsizek{\iter})^2  
\geq 0, 
\quad \coordinate=1,\dots,\dimension.
\end{array}
\end{equation}
The discriminant of~\eqref{stepsizelowerboundconditionconjectured}, given by
\begin{equation}
 \label{discriminant}
\begin{aligned}
 \Delta
 &= 
  4 \alpha^2  \left\lbrack  1 + \left( \frac{ \probai{\coordinate}}{ \diagonali{\coordinate}}  - \frac{\betaparameter }{\alpha } \right)\stepsizek{\iter}+ \left[ \left( \frac{ \probai{\coordinate}}{ \diagonali{\coordinate}}  - \frac{\betaparameter }{\alpha } \right)^2  -        \left( \frac{ \probai{\coordinate}(2-\probai{\coordinate})}{\diagonali{\coordinate}}  +   \frac{2\betaparameter}{\alpha}    \right) \frac{\probai{\coordinate}^2}{\diagonali{\coordinate}} \right] \frac{(\stepsizek{\iter})^2}{4}   \right\rbrack  \diagonali{\coordinate}^2 (\stepsizek{\iter})^2 
\\
&=
  4 \alpha^2  \left\lbrack  \left(1 +  \left( \frac{ \probai{\coordinate}}{ \diagonali{\coordinate}}  - \frac{\betaparameter }{\alpha } \right)\frac{\stepsizek{\iter}}{2}\right)^2 -      \left( \frac{ \probai{\coordinate}(2-\probai{\coordinate})}{\diagonali{\coordinate}}  +   \frac{2\betaparameter}{\alpha}    \right) \frac{\probai{\coordinate}^2(\stepsizek{\iter})^2}{4\diagonali{\coordinate}}   \right\rbrack  \diagonali{\coordinate}^2 (\stepsizek{\iter})^2 
,
\end{aligned}
\end{equation}
is nonnegative for
\begin{condition}
  \label{discriminantnonnegative}
\stepsizek{\iter}
\leq   
\frac{2}{
\max\Big(0, \big( \frac{\betaparameter }{\alpha } - \frac{ \probai{\coordinate}}{ \diagonali{\coordinate}} \big)
-
\frac{\probai{\coordinate}}{\sqrt{\diagonali{\coordinate}}} 
\sqrt{ \frac{ \probai{\coordinate}(2-\probai{\coordinate})}{\diagonali{\coordinate}}  +   \frac{2\betaparameter}{\alpha} } 
,
\frac{\probai{\coordinate}}{\sqrt{\diagonali{\coordinate}}} 
\sqrt{ \frac{ \probai{\coordinate}(2-\probai{\coordinate})}{\diagonali{\coordinate}}  +   \frac{2\betaparameter}{\alpha} } \,  -  \big( \frac{ \probai{\coordinate}}{ \diagonali{\coordinate}}  - \frac{\betaparameter }{\alpha } \big)\Big)
}
.
\end{condition}
Under~\eqref{discriminantnonnegative},
Condition~\eqref{stepsizelowerboundconditionconjectured} reduces to
\begin{align}
\stepsizek{\iter+1}
&\geq
\frac{- (\betaparameter  \diagonali{\coordinate} -  \alpha \probai{\coordinate}(1-\probai{\coordinate})) (\stepsizek{\iter})^2  +\sqrt{\Delta}}{2 \lbrack 
 (\alpha - \betaparameter \stepsizek{\iter}) (\diagonali{\coordinate} + \probai{\coordinate}\stepsizek{\iter} )  
 +  \betaparameter\probai{\coordinate}  (1-\probai{\coordinate}) (\stepsizek{\iter})^2 \rbrack}
 \label{rateearlyresult} 
 ,
 \end{align} 
holding for $\coordinate=1,\dots,\dimension$.
\obsolete{So we can take the largest root for the sequence (using $=$ sign).}%
Now, define~$\differencemin$, $\differencemax$ and~$\difference$ as
\begin{equation}\label{difference}
 \differencemin = \min_{\coordinate\in\{1,\dots,\dimension\}} \left\{ \frac{\probai{\coordinate}}{\diagonali{\coordinate}}-\frac{\betaparameter }{\alpha} \right\},
 \quad
 \differencemax = \max_{\coordinate\in\{1,\dots,\dimension\}} \left\{ \frac{\probai{\coordinate}}{\diagonali{\coordinate}}-\frac{\betaparameter }{\alpha} \right\},
 \quad
 \difference=\max(\modulus{\differencemin},\modulus{\differencemax})
.
\end{equation}
Let $\cparameter>1$, and assume that
\begin{equation} \label{newconditionstepsizeone}
-\frac{\cparameter-1}{\cparameter}  \leq   \left(\frac{\betaparameter\probai{\coordinate}^2}{\alpha \diagonali{\coordinate}}\right) (\stepsizek{\iter})^2 -\left( \frac{\probai{\coordinate}}{\diagonali{\coordinate}}-\frac{\betaparameter}{\alpha} \right)\stepsizek{\iter}  \leq \frac{\cparameter-1}{\cparameter} 
,
 \end{equation}
or, equivalently,
\begin{condition} \label{newconditionstepsizeoneexplicit}
\begin{aligned}
&
\stepsizek{\iter}
\leq
\left(\frac{\alpha \diagonali{\coordinate}}{2\betaparameter\probai{\coordinate}^2}\right) \left[{\differencemin+\sqrt{\differencemin^2+ 4 \left(\frac{\betaparameter\probai{\coordinate}^2}{\alpha \diagonali{\coordinate}}\right)\left(\frac{\cparameter-1}{\cparameter}\right)}}\right]
, \qquad
\\ &
\stepsizek{\iter}
\leq
\left(\frac{\alpha \diagonali{\coordinate}}{2\betaparameter\probai{\coordinate}^2}\right)\left[\differencemax-\sqrt{\differencemax^2- 4 \left(\frac{\betaparameter\probai{\coordinate}^2}{\alpha \diagonali{\coordinate}}\right)\left(\frac{\cparameter-1}{\cparameter}\right)}\right]
\text{ if }\differencemax>0.
\end{aligned}
\end{condition}
 Using the identities $\sqrt{1+x}\leq 1 + (x/2)$, and $(1-x)^{-1}\leq 1+x+\cparameter x^2$ for $x\leq 1-1/\cparameter $, we find
%
\allowdisplaybreaks
\begin{align}
 \nonumber
 &
\frac{1}{\stepsizek{\iter}} \left(
\frac{- (\betaparameter  \diagonali{\coordinate} -  \alpha \probai{\coordinate}(1-\probai{\coordinate})) (\stepsizek{\iter})^2  +\sqrt{\Delta}}{2 \lbrack 
 (\alpha - \betaparameter \stepsizek{\iter}) (\diagonali{\coordinate} + \probai{\coordinate}\stepsizek{\iter} )  
 +  \betaparameter\probai{\coordinate}  (1-\probai{\coordinate}) (\stepsizek{\iter})^2 \rbrack}
 \right)
\\
 \nonumber & \quad
\refereq{\eqref{discriminant}}{\leq}
\frac{ 1 + (\frac{ \probai{\coordinate}}{ \diagonali{\coordinate}}-\frac{\betaparameter}{\alpha}-\frac{\probai{\coordinate}^2}{ 2\diagonali{\coordinate}}  )  \stepsizek{\iter} + \left[ \left( \frac{ \probai{\coordinate}}{ \diagonali{\coordinate}}  - \frac{\betaparameter }{\alpha } \right)^2
-  \left( \frac{ \probai{\coordinate}(2-\probai{\coordinate})}{\diagonali{\coordinate}}  +   \frac{2\betaparameter}{\alpha}    \right) \frac{\probai{\coordinate}^2}{\diagonali{\coordinate}} \right]  \frac{(\stepsizek{\iter})^2}{8}  
}{
   1 -\big[ (\frac{\betaparameter\probai{\coordinate}^2}{\alpha \diagonali{\coordinate}}) (\stepsizek{\iter})^2 - (\frac{\probai{\coordinate}}{\diagonali{\coordinate}}-\frac{\betaparameter }{\alpha})\stepsizek{\iter}\big] } 
\\
 \nonumber & \quad \textstyle
\leq
\left[ 1 + \left(\frac{ \probai{\coordinate}}{ \diagonali{\coordinate}}-\frac{\betaparameter}{\alpha}-\frac{\probai{\coordinate}^2}{ 2\diagonali{\coordinate}}  \right)  \stepsizek{\iter} + 
\left[ \left( \frac{ \probai{\coordinate}}{ \diagonali{\coordinate}}  - \frac{\betaparameter }{\alpha } \right)^2
-  \left( \frac{ \probai{\coordinate}(2-\probai{\coordinate})}{\diagonali{\coordinate}}  +   \frac{2\betaparameter}{\alpha}    \right) \frac{\probai{\coordinate}^2}{\diagonali{\coordinate}} \right]  \frac{(\stepsizek{\iter})^2}{8}  
\right]
\\ \nonumber &   \pushright{ \textstyle \times
\left[
   1 + \left[ \left(\frac{\betaparameter\probai{\coordinate}^2}{\alpha \diagonali{\coordinate}}\right) (\stepsizek{\iter})^2 
- 
   \left(\frac{\probai{\coordinate}}{\diagonali{\coordinate}}-\frac{\betaparameter }{\alpha}\right)\stepsizek{\iter}
   \right] + \cparameter \left[\left(\frac{\probai{\coordinate}}{\diagonali{\coordinate}}-\frac{\betaparameter }{\alpha}\right)\stepsizek{\iter}  
 - \left(\frac{\betaparameter\probai{\coordinate}^2}{\alpha \diagonali{\coordinate}}\right) (\stepsizek{\iter})^2 \right]^2  \right] 
 }
\\
 \nonumber & \quad\textstyle =
1
-
\left(\frac{\probai{\coordinate}^2}{ 2\diagonali{\coordinate}}   \right) \stepsizek{\iter}
+
\left(\frac{\betaparameter\probai{\coordinate}^2}{\alpha \diagonali{\coordinate}}\right) (\stepsizek{\iter})^2
+
\cparameter \left[ \left(\frac{\betaparameter\probai{\coordinate}^2}{\alpha \diagonali{\coordinate}}\right) (\stepsizek{\iter})^2 - \left(\frac{\probai{\coordinate}}{\diagonali{\coordinate}}-\frac{\betaparameter }{\alpha}\right)\stepsizek{\iter}\right]^2
\\ \nonumber & \quad \textstyle \quad
+ \left[ \left( \frac{ \probai{\coordinate}}{ \diagonali{\coordinate}}  - \frac{\betaparameter }{\alpha } \right)^2
-  \left( \frac{ \probai{\coordinate}(2-\probai{\coordinate})}{\diagonali{\coordinate}}  +   \frac{2\betaparameter}{\alpha}    \right) \frac{\probai{\coordinate}^2}{\diagonali{\coordinate}} \right]  \frac{(\stepsizek{\iter})^2}{8} 
\\
 \nonumber & \quad\textstyle \quad +
\left[
\left[ \left(\frac{\betaparameter\probai{\coordinate}^2}{\alpha \diagonali{\coordinate}}\right) (\stepsizek{\iter})^2 
- 
   \left(\frac{\probai{\coordinate}}{\diagonali{\coordinate}}-\frac{\betaparameter }{\alpha}\right)\stepsizek{\iter}
   \right] 
+ 
\cparameter \left[\left(\frac{\probai{\coordinate}}{\diagonali{\coordinate}}-\frac{\betaparameter }{\alpha}\right)\stepsizek{\iter}  
 - \left(\frac{\betaparameter\probai{\coordinate}^2}{\alpha \diagonali{\coordinate}}\right) (\stepsizek{\iter})^2 \right]^2
\right]
\left(\frac{ \probai{\coordinate}}{ \diagonali{\coordinate}}-\frac{\betaparameter}{\alpha}-\frac{\probai{\coordinate}^2}{ 2\diagonali{\coordinate}}  \right)  \stepsizek{\iter} 
\\
 \nonumber & \quad\textstyle \quad +
\left[
\left[ \left(\frac{\betaparameter\probai{\coordinate}^2}{\alpha \diagonali{\coordinate}}\right) (\stepsizek{\iter})^2 
- 
   \left(\frac{\probai{\coordinate}}{\diagonali{\coordinate}}-\frac{\betaparameter }{\alpha}\right)\stepsizek{\iter}
   \right] 
+ 
\cparameter \left[\left(\frac{\probai{\coordinate}}{\diagonali{\coordinate}}-\frac{\betaparameter }{\alpha}\right)\stepsizek{\iter}  
 - \left(\frac{\betaparameter\probai{\coordinate}^2}{\alpha \diagonali{\coordinate}}\right) (\stepsizek{\iter})^2 \right]^2
\right]
\\ \nonumber & \pushright{\textstyle \times
\left[ \left( \frac{ \probai{\coordinate}}{ \diagonali{\coordinate}}  - \frac{\betaparameter }{\alpha } \right)^2
-  \left( \frac{ \probai{\coordinate}(2-\probai{\coordinate})}{\diagonali{\coordinate}}  +   \frac{2\betaparameter}{\alpha}    \right) \frac{\probai{\coordinate}^2}{\diagonali{\coordinate}} \right]  \frac{(\stepsizek{\iter})^2}{8}  
}
\\
 \nonumber &  \refereq{\eqref{difference},\eqref{newconditionstepsizeone}}{\leq} \textstyle
1
-
\left(\frac{\probai{\coordinate}^2}{ 2\diagonali{\coordinate}}   \right) \stepsizek{\iter}
+
\cparameter \left[ \frac{17\difference^2}{8} + \left( \frac{\difference}{2} + \frac{\betaparameter}{\alpha} \right)   \frac{\probai{\coordinate}^2}{\diagonali{\coordinate}} + \frac{1+2\difference}{8} \left(\frac{\probai{\coordinate}^2}{\diagonali{\coordinate}}\right)^2  \right] 
(\stepsizek{\iter})^2 
 + \cparameter \left(2 \difference + \frac{\probai{\coordinate}^2}{ 2\diagonali{\coordinate}}  \right)   \left(\frac{\betaparameter\probai{\coordinate}^2}{\alpha \diagonali{\coordinate}}\right) (\stepsizek{\iter})^3 
\\   
 & \quad 
=
1
-
\left(\frac{\probai{\coordinate}^2}{ 2\diagonali{\coordinate}}   \right) \stepsizek{\iter}
+
\crap{\stepsizek{\iter}}
,
\label{ratefinalresult}
\end{align}
\interdisplaylinepenalty=10000
where $\crap{\epsilon}=\magnitude{\epsilon^2}
$. %
%
The above result can be balanced by setting $\diagonali{\coordinate} = \probai{\coordinate}^2$ for $\coordinate=1,\dots,\dimension$.
In particular, it can be seen from~\eqref{rateearlyresult} and~\eqref{ratefinalresult} that $\stepsizek{\iter+1}
\leq 
 \stepsizek{\iter}
-
(1-\epsilon) (\stepsizek{\iter})^2/{2}$ whenever
\begin{condition}\label{newconditionstepsizetwo}
\stepsizek{\iter}
\leq
\left[\frac{ (17\difference^2+ 6\difference+ 1)\alpha + 8\betaparameter}{8 \left(4 \difference + 1  \right)   \betaparameter }\right]
\left[\sqrt{1+\frac{16 \alpha \betaparameter(4 \difference + 1)\, \epsilon   }{ [(17\difference^2+ 6\difference+ 1)\alpha + 8\betaparameter ]^2 \cparameter } }-1\right]
%
.
\end{condition}
It follows that the stepsize sequence~\eqref{stepsizesequencepolynomial}
satisfies~\eqref{stepsizecondition} and decreases with complexity
$
\stepsizek{\iter+1}
\leq
 \stepsizek{\iter}
-
 ({1-\epsilon})(\stepsizek{\iter})^2/2
$ 
on condition that~$\stepsizek{0}$ satisfies~\eqref{discriminantnonnegative}, \eqref{newconditionstepsizeoneexplicit} and~\eqref{newconditionstepsizetwo}. 
 
Now we show that the tight stepsize rule~\eqref{stepsizesequence}
vanishes with asymptotic behavior 
$
\stepsizek{\iter+1}
=
 \stepsizek{\iter}
-
{(\stepsizek{\iter})^2}/{2}
+
\magnitude{(\stepsizek{\iter})^3}
$. 
To see this, observe that~\eqref{stepsizelowerboundconditionconjectured} has two fixed points, $0$ and $\frac{\alpha}{\betaparameter}$, where~$0$  is the unique attracting fixed point. 
Indeed, the positive solution of~\eqref{stepsizelowerboundconditionconjectured}  with $\diagonali{\coordinate}= \probai{\coordinate}^2$ is  given by~\eqref{stepsizesequence}, which reduces for $\stepsizek{\iter}=\frac{\alpha}{\betaparameter}-\epsilon$ to
\begin{equation}
 \stepsizek{\iter+1}
=
\stepsizek{\iter} -\frac{\betaparameter\probai{\coordinate}+\alpha}{\betaparameter\probai{\coordinate}+(1-\probai{\coordinate}) \alpha} \epsilon +\magnitude{\epsilon^2} 
.
\end{equation}
Hence, stepsize sequence~\eqref{stepsizesequence} will run away from~$\alpha/\betaparameter$ and  decrease towards~$0$ provided that $\stepsizek{0}<\frac{\alpha}{\betaparameter}$. Moreover, we will have $
\dualstepsizek{\iter} > 0
$ for all~$\iter$ in~\eqref{newdualstepsizek}.

To find the asymptotic rate of~$\stepsizek{\iter}$ under $\diagonali{\coordinate} = \probai{\coordinate}^2$ for $\coordinate=1,\dots,\dimension$, we apply $\sqrt{1+x}= 1 + (x/2)+ \magnitude{x^2}$ and $(1-x)^{-1}\leq 1+x+ \magnitude{x^2}$ to~\eqref{stepsizesequence}. Proceeding as in~\eqref{ratefinalresult}, we find $\stepsizek{\iter+1}=\stepsizek{\iter}-(\stepsizek{\iter})^2/2+\magnitude{(\stepsizek{\iter})^3}$. 
We infer that $\stepsizek{\iter}-2/\iter=\zero{1/\iter}$, and it follows from~\eqref{newdualstepsizek} that $\dualstepsizek{\iter} - ({\alpha\iter}/{2} - \fixed{\betaparameter} ) = \zero{1} $, $ \Sigmak{\iter}- [ \alpha\iter(\iter+1)/{4} - (\iter+1) \fixed{\betaparameter}] = \zero{\iter} $.\footnotemark{} 
\footnotetext{If $0{\,<\,}a[1]{\,<\,}2$, the solution of the difference equation $a[\iter+1]{\,=\,}a[\iter]{\,-\,}\frac 1 2 a[\iter]^2$ is $a[\iter]{\,=\,}\frac 2 \iter{\,+\,}\zero{1/\iter}$ for $\iter{\,\geq\,} 1$.
}
%
\obsolete{
$$
\boxed{
\penalizedk{\iter}= \hatcompositek{\iter} -\compositeopt
 + \Sigmak{\iter-1} (\constraint{\Sallk{\iter}}-\constraintmin ) 
 }
 $$
 $$
 \boxed{
 \lyapunovk{\iter}{\xall} = \frac{\dualstepsizek{\iter}}{2\stepsizek{\iter}}    \normx{\xallk{\iter}-\xall}{\Dimension^2\Stepsize+\stepsizek{\iter}\Dimension\Convex}^2   +  \Sigmak{\iter-1}  \penalizedk{\iter}=
 \frac{1}{2}    \normx{\xallk{\iter}-\xall}{\scalematrixk{\iter}}^2   +  \Sigmak{\iter-1}  \penalizedk{\iter}
 }
 $$

Recall that under~\eqref{stepsizecondition}, $\Dimension\Stepsize-\stepsizek{\iter}(\dualstepsizek{\iter}\constraintcovariance+\NSmoothness)$ is positive semidefinite, and~\eqref{earlystronglyconvexfejer} yields
\begin{equation} \tag{To be removed}
\nocolsep
\begin{array}{l}
%
\condexpectation{\filtrationk{\iter}}{ \frac{1}{2} \normx{\xallk{\iter+1}-\xallsol}{\scalematrixk{\iter+1}}^2 
  +   \Sigmak{\iter}(
\penalizedk{\iter+1}-\compositeopt
)}
\qquad\qquad\qquad\qquad\qquad\\\hfill 
\leq \frac{1}{2} \normx{\xallk{\iter}-\xallsol}{\scalematrixk{\iter}}^2  +  \Sigmak{\iter-1} 
(
\penalizedk{\iter}-\compositeopt
)
   -(\dualstepsizek{\iter})^2 (\constraint{\xallk{\iter}}-\constraintmin)
,
\end{array} 
\end{equation} 
where $\boxed{\scalematrixk{\iter} =\dualstepsizek{\iter}[\Dimension^2\Stepsizek{\iter}+\Dimension\Convex] =  ({\alpha}/{\stepsizek{\iter}} - \fixed{\betaparameter})[\identityI{\primaldimension}/\stepsizek{\iter}+\Dimension]\Convex}$. 

$$\boxed{\stepsizek{\iter}=\frac{2}{\iter}+\zero{1/\iter}}$$
$$\boxed{\dualstepsizek{\iter} = \frac{\alpha\iter}{2} - \fixed{\betaparameter} + \zero{1} }
$$
$$ \boxed{
\Sigmak{\iter} = \frac{\alpha\iter(\iter+1)}{4} - \fixed{\betaparameter}(\iter+1) + \zero{\iter} }
$$

}%
%


\section{Numerical experiments for an optimal transport problem}\label{section:optimaltransport}

\newcommand{\newcongestion}[2]{#1}%
\newcommand{\monopolistic}[2]{#1}%

\renewcommand{\indicatorfunction}[1]{\mathbb{I}_{#1}}%

\newcommand{\duali}[1]{\dualvariable_{#1}}%

\newcommand{\OTxset}{X}%
\newcommand{\OTxsetj}[1]{\OTxset_{#1}}%
\newcommand{\OTxsetsol}{\OTxset^*}%
\newcommand{\OTx}{x}%
\newcommand{\OTxj}[1]{\OTx_{#1}}%
\newcommand{\OTxij}[2]{\OTx_{{#1}{#2}}}%
\newcommand{\OTxk}[1]{\OTx^{#1}}%
\newcommand{\OTxkj}[2]{\OTxk{#1}_{#2}}%
\newcommand{\OTxkij}[3]{\OTxk{#1}_{{#2}{#3}}}%
\newcommand{\altOTx}{\tilde{\OTx}}%
\newcommand{\altOTxj}[1]{\altOTx_{#1}}%
\newcommand{\altOTxij}[2]{\altOTx_{{#1}{#2}}}%
\newcommand{\altOTxk}[1]{\altOTx^{#1}}%
\newcommand{\altOTxkj}[2]{\altOTxk{#1}_{#2}}%
\newcommand{\altOTxkij}[3]{\altOTxk{#1}_{{#2}{#3}}}%
\newcommand{\OTz}{z}%
\newcommand{\OTzj}[1]{\OTz_{#1}}%
\newcommand{\identityj}[1]{\identity_{#1}}%

\newcommand{\constant}{a}%
\newcommand{\costmaxi}[1]{\bar{\costsymbol}_{#1}}%
\newcommand{\strongconvexitymodulus}{M}%
\newcommand{\strongconvexitymodulusj}[1]{\strongconvexitymodulus_{#1}}%
\newcommand{\strongconvexitymodulusij}[2]{\strongconvexitymodulus_{{#1}{#2}}}%

\newcommand{\price}{p}%
\newcommand{\pricej}[1]{\price_{#1}}%
\newcommand{\revenuefunction}{R}%
\newcommand{\revenue}[1]{\revenuefunction({#1})}%

\newcommand{\xmeasurefunction}{\mu}%
\newcommand{\xmeasure}[1]{\xmeasurefunction(#1)}%
\newcommand{\xMeasure}[1]{\xmeasurefunction\left(#1\right)}%
\newcommand{\ymeasurefunction}{\nu}%
\newcommand{\ymeasure}[1]{\ymeasurefunction(#1)}%
\newcommand{\ymeasurei}[1]{\ymeasurefunction_{#1}}%
\newcommand{\ymeasurevec}{\bm{\ymeasurefunction}}%
\newcommand{\ymeasureveci}[1]{\ymeasurevec_{#1}}%
\newcommand{\xmeasurei}[1]{\xmeasurefunction_{#1}}%
\newcommand{\xmeasurevec}{\bm{\xmeasurefunction}}%
\renewcommand{\xmeasurevec}{\xmeasurefunction}%
\newcommand{\altxmeasurefunction}{\tilde{\xmeasurefunction}}%
\newcommand{\altxmeasure}[1]{\altxmeasurefunction(#1)}%
\newcommand{\altymeasurefunction}{\tilde{\ymeasurefunction}}%
\newcommand{\altymeasure}[1]{\altymeasurefunction(#1)}%
\newcommand{\nx}{m}%
\newcommand{\ny}{n}%
\newcommand{\costsymbol}{c}%
\newcommand{\costj}[1]{\costsymbol_{#1}}%
\newcommand{\costij}[2]{\costsymbol_{{#1}{#2}}}%
\newcommand{\congestionfunction}{g}%
\newcommand{\congestion}[1]{\congestionfunction({#1})}%
\newcommand{\congestionjfunction}[1]{\congestionfunction_{#1}}%
\newcommand{\congestionj}[2]{\congestionjfunction{#1}({#2})}%
\newcommand{\smoothjfunction}[1]{\smoothfunction_{#1}}%
\newcommand{\smoothj}[2]{\smoothjfunction{#1}({#2})}%
\newcommand{\nonsmoothjfunction}[1]{\nonsmoothfunction_{#1}}%
\newcommand{\nonsmoothj}[2]{\nonsmoothjfunction{#1}({#2})}%
\newcommand{\smoothcongestionfunction}{d}%
\newcommand{\smoothcongestion}[1]{\smoothcongestionfunction({#1})}%
\newcommand{\smoothcongestionjfunction}[1]{\smoothcongestionfunction_{#1}}%
\newcommand{\smoothcongestionj}[2]{\smoothcongestionjfunction{#1}({#2})}%
\newcommand{\dsmoothcongestionfunction}{\smoothcongestionfunction^\prime}%
\newcommand{\dsmoothcongestion}[1]{\dsmoothcongestionfunction({#1})}%
\newcommand{\dsmoothcongestionjfunction}[1]{\dsmoothcongestionfunction_{#1}}%
\newcommand{\dsmoothcongestionj}[2]{\dsmoothcongestionjfunction{#1}({#2})}%

\renewcommand{\ny}{\dimension}%

We consider an optimal transport problem where a number of customers are willing to buy a service available at various sites $1,\dots,\ny$. 
The customers are identical within each of~$\nx$ distinct classes indexed by $1,\dots,\nx$. Site~$j$ is characterized by a cost vector $\costj{j}=(\costij{1}{j},\dots,\costij{\nx}{j})$, where~$\costij{i}{j}$ denotes the cost charged to any customer from class~$i$ for buying the service at site~$j$ ($i=1,\dots,\nx$; $j=1,\dots,\ny$). 
We use the following notation: for $i=1,\dots,\nx$ and $j=1,\dots,\ny$,
\renewcommand{\labelitemi}{$\bullet$}%
\begin{itemize}

\item
$\xmeasurei{i} \in \Realpluszero$: mass of customers from class~$i$, 

\item
$\ymeasurei{j} \in \Realpluszero$: capacity at site~$j$, 

\item
$\OTxij{i}{j} \in \Realpluszero$: mass of customers from class~$i$ receiving service at site~$j$, 

\item
$\OTxj{j}=(\OTxij{1}{j},\dots,\OTxij{\nx}{j}) \in \Realpluszero^\nx$: service schedule at site~$j$, 


\item
$\OTx=(\OTxj{1},\dots,\OTxj{\ny}) \in \Realpluszero^{\nx\ny}$:
global service schedule, 


\item 
$\costj{j}=(\costij{1}{j},\dots,\costij{\nx}{j})$: cost vector associated with site~$j$,

\item 
$\costij{i}{j}$: cost charged to a customer from class~$i$ when receiving service at site~$j$.
\end{itemize}
\noindent
One would like to the maximize the overall attractiveness of the service by solving
\begin{equation}\label{problem:scheduling}\tag{\ensuremath{\text{OT}}}
 \begin{array}{ccl}
  \underset{\OTx}{\text{minimize }}
  &
  \sum_{j=1}^{\ny} \left[
  \inner{\costj{j}}{\OTxj{j}} + \smoothcongestionj{j}{\OTxj{j}}\right]
&
\quad
\\
\text{subject to}
& 
  \sum_{j=1}^{\ny} \OTxij{i}{j} = \xmeasurei{i}
&
(i=1,\dots,\nx)
\\
& 
  \sum_{i=1}^{\nx} \OTxij{i}{j}
\leq  \ymeasurei{j}
&
(j=1,\dots,\ny)
\\
  & 
  \OTxij{i}{j} \geq 0 
&
(i=1,\dots,\nx,\, j=1,\dots,\ny)
 \end{array}
\end{equation}
where~$ \smoothcongestionjfunction{j} : \Realpluszero^\nx \mapsto \Realpluszero$ is a smooth, convex function 
such that~$\smoothcongestionj{j}{\OTxj{j}}$ models an additional congestion cost around site~$j$ under service schedule~$\OTxj{j}$. 

Problem~\eqref{problem:scheduling} rewrites as as an instance of~\eqref{initialP} in which~$\primaldimensioni{j}\equiv \nx$ for $j=1,\dots,\dimension$,  the equality constraint is specified by $\bvector=\xmeasurevec=(\xmeasurei{1},\dots,\xmeasurei{\nx})$ and
$\Aall = (\AI{1}\ \cdots \ \AI{\ny}) $ with $\AI{j}=\identityj{\primaldimensioni{j}}$, the feasible sets are given for $j=1,\dots,\ny$ by
\begin{equation} 
 \OTxsetj{j}=\{\OTxj{j}\in\Realpluszero^\nx : \ymeasurei{j}-||\OTxj{j}||_1 \geq 0  
 \}
 ,
 \end{equation}
and the 
objective functions by
\begin{equation}
\smoothj{j}{\OTxj{j}} = \inner{\costj{j}}{\OTxj{j}} + \smoothcongestionj{j}{\OTxj{j}} -\frac{1}{2}\normx{\OTxj{j}}{\Convexi{j}}^2
,
\quad
 \nonsmoothj{j}{\OTxj{j}} 
 =  
 \indicator{\OTxsetj{j}}{\OTxj{j}} + \frac{1}{2}\normx{\OTxj{j}}{\Convexi{j}}^2
%
,
\end{equation}
where $\smoothjfunction{j}$ is smooth convex and $\nonsmoothjfunction{j}$ is proper convex lower semi-continuous, 
and~$\indicatorfunction{C} $ denotes the indicator function of the set~$C$.
%
%
%
The stationarity condition for~\eqref{problem:scheduling} implies that every primal-dual solution~$(\OTx,\dualvariable)$  satisfies
\begin{eqnarray}
\costij{i}{j}+
\newcongestion{
\gradI{j}\smoothcongestionj{j}{\OTxj{j}}
}{
\dsmoothcongestionj{j}{\OTxj{j}} 
}
+\duali{i} > 0
&
\Longrightarrow
&
\OTxij{i}{j}=0
,
\\
\OTxij{i}{j}>0
&
\Longrightarrow
&
\costij{i}{j}+
\newcongestion{
\gradI{j}\smoothcongestionj{j}{\OTxj{j}}
}{
\dsmoothcongestionj{j}{\OTxj{j}} 
}
+ \delta_j +\duali{i} = 0
,
\end{eqnarray}
for some $\delta_j\geq 0$, where~$\delta_j$ denotes the dual variable relative to the side inequality constraint $  \sum_{i=1}^{\nx} \OTxij{i}{j}
-  \ymeasurei{j}
\leq 0
$, and~$\duali{i}$ relates to the equality constraint $\sum_{j=1}^{\ny} \OTxij{i}{j} - \xmeasurei{i}=0$.

\subsection*{Customer guidance through pricing.}
The interpretation of these dual variables as prices allows the operator to maximize the service quality by
assigning unit prices $\pricej{1},\dots,\pricej{\ny}$ to the service at each site, under the assumption that all customers are willing to be served at lowest cost, i.e., a customer from class~$i$ is going to purchase the service at a site~$j$ that minimizes their prospective total cost $\costij{i}{j}+\pricej{j}$.
\monopolistic{}{
For this we may consider two settings:
\begin{enumerate}[(i)]
 \item 
 \label{contextmono}
 A monopolistic context, where the customers are committed to puchasing the sevice and the operator must secure a fixed revenue for the service provider, 
 \item 
 \label{contextfree}
 A free market environment, where service must be guaranteed under the additional constraint that each customer from a given class may reject the service if its minimal prospective cost exceeds a certain threshold.
\end{enumerate}
}%
Indeed, implementing the price profile 
\begin{equation}\label{pricingsolution}
 \pricej{j}=  
 \newcongestion{
\gradI{j}\smoothcongestionj{j}{\OTxj{j}}
}{
\dsmoothcongestionj{j}{\OTxj{j}} 
}
+ \delta_j - \constant 
 , 
\end{equation}
where~$\constant$ is an arbitrary constant parameter, yields
\begin{eqnarray}
 \OTxij{i}{j}>0 
 &
 \Longrightarrow
 &
 \costij{i}{j}+ \pricej{j}   +\duali{i} + \constant = 0
,
\\
\costij{i}{j} + \pricej{j} +\duali{i} + \constant > 0
&
\Longrightarrow
&
\OTxij{i}{j}=0
\end{eqnarray}
or, equivalently,
\begin{equation}\label{decision}
\OTxij{i}{j}>0 
 \Longrightarrow
j\in\arg\min\nolimits_{j^\prime} \{ \costij{i}{j^\prime}+ \pricej{j^\prime}  \}
.
\end{equation}
which is the desired result. Any indecisions in~\eqref{decision} may be dealt with in practice using a booking system.

\subsection*{Experiments.}
\newcommand{\proj}[2]{[{#1}]^+{#2}}%
\newcommand{\Proj}[2]{\left[{#1}\right]^+{#2}}%
\newcommand{\bigproj}[2]{\big[{#1}\big]^+{#2}}%
\renewcommand{\scalingsymbol}{\lambda}%
\renewcommand{\scalingIk}[2]{\scalingsymbol_{#1}^{#2}}%
In our numerical experiments we use the basic congestion model~$\smoothcongestionj{j}{\OTxj{j}}= \frac{1}{2}\normx{\OTxj{j}}{\Convexi{j}}^2$ for~$j=1,\dots,\ny$, where~$\Convexi{j}\succeq 0$,
which implies convexity in the sense of Assumption~\ref{assumption:smoothnonsmooth}\eqref{assumption:smoothnonsmooth:nonsmooth} with characteristic~$\Convex=\diag({\Convexi{1},\dots,\Convexi{\ny}})$.
In particular, if we set~$\Convexi{j}=\strongconvexitymodulusj{j}\identityj{\primaldimensioni{j}}$ for some~$\strongconvexitymodulusj{j}\geq 0$, 
the primal step on
\cref{algorithm:primaldual:genericxIk}  of \cref{algorithm:newgenericprimaldual} reduces to 
\begin{equation}\label{scaledprojectedgradient}
\begin{array}{rcl}
\xIk{\altcoordinate}{\iter+1}
&
=
&\displaystyle
\Prox{\indicatorfunction{\OTxsetj{j}}}{
\big({\strongconvexitymodulusj{j}+\scalingIk{\altcoordinate}{\iter}}\big)^{-1}
\big({\scalingIk{\altcoordinate}{\iter}\xIk{\altcoordinate}{\iter}
-
\big(\costj{j}+\dualk{\iter}\big)}\big)
}
,
\end{array}
\end{equation}
where
we have used $\DimensionI{\altcoordinate}=({1}/{\probai{\altcoordinate}})\,\identityI{\primaldimensioni{\altcoordinate}}$ and either the constant preconditiong
$\StepsizekI{\iter}{\altcoordinate}=\StepsizeI{\altcoordinate}$ with
$
\StepsizeI{\altcoordinate} 
=
(1/{\stepsizei{\altcoordinate}} + \dualstepsize)\identityI{\primaldimensioni{\altcoordinate}}
$ derived from~\eqref{earlydiagonalscaling}, or the decreasing sequence
$\StepsizekI{\iter}{\altcoordinate}=\StepsizeI{\altcoordinate}/\stepsizek{\iter}$ with $\StepsizeI{\altcoordinate}
=
(\strongconvexitymodulusj{j}/\probai{\altcoordinate}^{2})\identityI{\primaldimensioni{\altcoordinate}}
$ and~$\stepsizek{\iter}$ satisfying~\eqref{stepsizesequence}, which respectively give
$
\scalingIk{\altcoordinate}{\iter}
=
({1}/{\probai{\altcoordinate}})
(1/{\stepsizei{\altcoordinate}} + \dualstepsize)
$
and
$
\scalingIk{\altcoordinate}{\iter}
=
(\strongconvexitymodulusj{j}\probai{\altcoordinate}/\stepsizek{\iter})
$.
The proximal operation in~\eqref{scaledprojectedgradient} thus involves a (scaled) gradient descent and an Euclidean projection on~$\OTxsetj{j}$, which are straightforward to compute.

\smallskip

\begin{table}
\begin{subtable}{1.0\linewidth}\centering
\begin{tabular}{|r||rrrr|rrr|}
\hline
$(\nx,\ny):$ 
& 
$(10,10)$ & $(20,20)$ & $(50,50)$ & $(100,100)$ & $(10,40)$ & $(10,250)$ & $(10,1000)$
\\
\hline
Constant ($\dualstepsize=1$):
&
261
&
519
&
1\,091
&
1\,927
&
914
&
3\,929
&
9\,350
\\
Constant (f.t. $\dualstepsize$):
&
221
&
99
&
44
&
56
&
41
&
96
&
278
\\
Accelerated:
&
130
&
128
&
152
&
122
&
107
&
92
&
62
\\
\hline
\end{tabular}
\caption{Epochs until $\normx{\Aall\xall-\bvector}{\infty}
 <
 10^{-6}$}\label{table:experiments:feas}
\bigskip
\end{subtable}
\begin{subtable}{1.0\linewidth}\centering
\begin{tabular}{|r||rrrr|rrr|}
\hline
$(\nx,\ny):$ 
& 
$(10,10)$ & $(20,20)$ & $(50,50)$ & $(100,100)$ & $(10,40)$ & $(10,250)$ & $(10,1000)$
\\
\hline
Constant ($\dualstepsize=1$):
&
409
&
816
&
1\,988
&
3\,861
&
1\,655
&
10\,762
&
/
\\
Constant (f.t. $\dualstepsize$):
&
221
&
99
&
53
&
64
&
45
&
141
&
472
\\
Accelerated:
&
1\,589
&
1\,288
&
2\,333
&
1\,094
&
1\,092
&
1\,771
&
1\,773
\\
\hline
\end{tabular}
\caption{Epochs until $ \bigdistt{\infty}{(\partial_{\xall},-\partial_{\dualvariable})\lagrangian{\xallk{\iter}}{\dualk{\iter}}}{0}
 <
 10^{-6}
$  where $\lagrangian{\xall}{\dualvariable}=\smooth{\xall} + \nonsmooth{\xall}+\inner{\dualvariable}{\Aall\xall-\bvector}$}\label{table:experiments:KKT}
\end{subtable}
\caption{Application of \cref{algorithm:newgenericprimaldual}  to~\eqref{problem:scheduling} with $\smoothcongestionj{j}{\OTxj{j}}= \frac{1}{2}\norm{\OTxj{j}}^2$. Constant~\eqref{earlydiagonalscaling} with~$\dualstepsize=1$ and with finely-tuned~$\dualstepsize$, and accelerated~\eqref{stepsizesequence} scaling. 
\label{table:experiments}}
\end{table}







In our tests, we set $\strongconvexitymodulusij{1}{j}=\cdots=\strongconvexitymodulusij{\nx}{j}=1
$ for~$j=1,\dots,\ny$. 
All the entries of $\costj{1},\dots,\costj{\dimension}$, $\xmeasurei{1}$,\dots,$\xmeasurei{\nx}$, and $\ymeasurei{1}$,\dots, $\ymeasurei{\ny}$ are picked randomly, independently and uniformly in $[0,1]$. The populations $\xmeasurei{1}$,\dots,$\xmeasurei{\nx}$ are then normalized so that the total load over total capacity saturation factor is~$0.8$ i.e., $\sum_{i=1}^{\nx}\xmeasurei{i}= 0.8 \sum_{j=1}^{\ny}\xmeasurei{j}$.
\cref{algorithm:newgenericprimaldual} is then implemented with the following block selection policy:
at every step, each agent updates their primal variables independently with probability~$1/\dimension$. Ignoring all update-free steps, which occur with probability $\probai{0}=(1-1/\dimension)^{\dimension}$, we find~$\probai{1}=\cdots=\probai{\dimension}=[\dimension (1-\probai{0})]^{-1}$ and $\probaij{\coordinate}{\altcoordinate}=[\dimension^2 (1-\probai{0})]^{-1}$ if $\coordinate\neq \altcoordinate$.
For the constant stepsize policy, we fix~$\dualstepsize$ and we set
$\stepsizei{1}=\cdots=\stepsizei{\dimension}
=
(1/2\dualstepsize)\,\dimension (1-\probai{0}) \,(\spectralradius{\constraintcovariance}-1) ^{-1}
$ in accordance with~\eqref{earlydiagonalscaling}. 
For the accelerated implementation, the decreasing stepsize sequence~$\stepsizek{\iter}$ is computed as in~\eqref{stepsizesequence} with $\stepsizek{0}=1$. Since in this problem $\NSmoothness=0$, we have $\betaparameter=\conditioning=0$. 

\cref{table:experiments} displays the number of updates per agent (epochs) that were needed to approach optimality. As stopping criteria we consider the feasiblity residual~$\normx{\Aall\xall-\bvector}{\infty}$ and the KKT residual $ \bigdistt{\infty}{(\partial_{\xall},-\partial_{\dualvariable})\lagrangian{\xallk{\iter}}{\dualk{\iter}}}{0}$, where $\lagrangian{\xall}{\dualvariable}=\smooth{\xall} + \nonsmooth{\xall}+\inner{\dualvariable}{\Aall\xall-\bvector}$ denotes the Lagrangian of~\eqref{initialP}.
In our simulations, the constant scaling implementation proved quite sensitive to the chosen parameters. 
Setting~$\dualstepsize=1$ was fine with smaller networks, though performance deteriorated as the size of the problem increased, and in particular for a larger number~$\ny$ of agents. Speed of convergence can be considerably increased by fine tuning the parameter values, as can be seen on the second lines of \cref{table:experiments:feas} and \cref{table:experiments:KKT}, where we used~$\dualstepsize=0.1$ for~$(\nx,\ny)=(10,10)$ and~$\dualstepsize=0.01$ for the larger networks.
The accelerated scaling, on the other hand, offered good and consistent (near dimension-free) performance even in the larger networks with tens of thousands of variables and many  agents.

\obsolete{
%
\begin{algorithm}[t]
\caption{ (To disappear)
\label{algorithm:OT}}
\DontPrintSemicolon
\SetAlgoNoLine%
\SetKwInOut{Parameters}{{\textbf{Parameters}}}
\SetKwInOut{Init}{{\textbf{Initialization}}}
\SetKwInOut{Output}{{\textbf{Output}}}
\SetKwFor{For}{for}{do}{} 
\Parameters{ 
$\Dimension$, $ \Stepsizek{\iter} $, $\dualstepsizek{\iter}$
} 
\Init{$\OTxk{0}\in
\OTxsetj{1}\times\cdots\times\OTxsetj{\ny}$,
$\dualk{0} =   \dualstepsizek{0} (\sum_{\coordinate=1}^{\dimension}\OTxkj{0}{\coordinate}-\xmeasurevec)$, $\altdualk{0} =  \sum_{\coordinate=1}^{\dimension}\OTxkj{0}{\coordinate}-\xmeasurevec$} 
\Output{$\OTxk{\iter}$
}
\smallskip 
\SetAlgoLined
 \nonl \For{$\iter=0,1,2,\dots$}{
\old{
\routinenl
$\Zallk{\iter} = (1-\thetak{\iter}) \Sallk{\iter} + \thetak{\iter} \OTxk{\iter} $ \; \label{algorithm:OTPDZallk}
}
\hideiftight{
$ \varthetak{\iter+1} = \thetak{\iter}  (1-\thetak{\iter+1}) /\thetak{\iter+1} $ \; \label{algorithm:OTvarthetak}
}%
\routinenl
\text{{\textbf{draw coordinate block~$\block\in\Block$ at random 
} }} \;
\nonl \For{$\coordinate=1,\dots,\dimension$}{
  \routinenl \lIf{$\coordinate{\,\in\,}\blockkw{\iter}{\sample}$}{%
$ \OTxkj{\iter+1}{\coordinate} = \arg\min_{\altOTxj{\coordinate}\in\OTxsetj{\coordinate}}\big\{ 
\newcongestion{
\inner{\costj{\coordinate}
+\grad\smoothcongestionj{\coordinate}{\OTxkj{\iter}{\coordinate}} +\dualk{\iter}}{\altOTxj{\coordinate}} 
}{
\inner{\costj{\coordinate}+\dsmoothcongestionj{\coordinate}{||\OTxkj{\iter}{\coordinate}||_1}\, \onesvectorn{\nx} +\dualk{\iter}}{\altOTxj{\coordinate}} 
}
+ 
\frac{1}{2\stepsizek{\iter}} \normx{\altOTxj{\coordinate}-\OTxkj{\iter}{\coordinate}}{\DimensionI{\coordinate}\StepsizeI{\coordinate}}^2 \big\}
$  \label{algorithm:OTxkj}
  }
  \routinenl \lElse{
  $ \xIk{\coordinate}{\iter+1} = \xIk{\coordinate}{\iter} $  \label{algorithm:genericxmIk}
  }
} 
%
%
\routinenl
$ \altdualk{\iter+1} 
 =  \altdualk{\iter} + \sum_{\coordinate=1}^{\dimension} ( \OTxkj{\iter+1}{\coordinate}  -  \OTxkj{\iter}{\coordinate} )
$ \; \label{algorithm:OTaltdualk}
\routinenl
$  \dualk{\iter+1} =  \dualk{\iter} + \sum_{\coordinate=1}^{\dimension} \frac{\dualstepsizek{\iter}}{\probai{\coordinate}}
(\OTxkj{\iter+1}{\coordinate} -\OTxkj{\iter}{\coordinate})   + \dualstepsizek{\iter+1} \altdualk{\iter+1}
$ \; \label{algorithm:OTdualk}
}
\end{algorithm}%
}%

\OBoff{Check Kleinrock's congestion model $a/(b-x)$.
}%
\OBoff{$\smoothcongestionj{j}{\OTzj{j}}= v_{\text{out}}(\OTzj{j}/\ymeasurei{j})^t$}%
\OBoff{$\smoothcongestionj{j}{\OTzj{j}(\OTxj{j})}
= 
a \, (\OTzj{j}/\ymeasurei{j})^t
=
a \, (\sum_{i=1}^{\nx}\OTxij{i}{j}/\ymeasurei{j})^t
$\\
$\grad\smoothcongestionj{j}{\OTzj{j}(\OTxj{j})}
= 
a \, (\sum_{i=1}^{\nx}\OTxij{i}{j}/\ymeasurei{j})^t
$
}%
\OBoff{Congestion model 
\\
$
\smoothcongestionj{j}{\OTzj{j}}
=
a_{j}/(\ymeasurei{j}-\OTzj{j})
$
\\
$\smoothcongestionj{j}{\OTzj{j}(\OTxj{j})}
= 
a_{j}/(\ymeasurei{j}-\sum_{i=1}^{\nx}\OTxij{i}{j})
$
\\
$
\grad\smoothcongestionj{j}{\OTzj{j}(\OTxj{j})}
= 
\big(a_{j}/(\ymeasurei{j}-\sum_{i=1}^{\nx}\OTxij{i}{j})^2\big)_{i}
=
\big(a_{j}/(\ymeasurei{j}-\OTzj{j}(\OTxj{j}))^2\big)_{i}
$
\\
$
\Hessian\smoothcongestionj{j}{\OTzj{j}(\OTxj{j})}
= 
\big(2a_{j}/(\ymeasurei{j}-\sum_{i=1}^{\nx}\OTxij{i}{j})^3\big)_{i}
=
\big(2a_{j}/(\ymeasurei{j}-\OTzj{j}(\OTxj{j}))^3\big)_{i}
\geq
2a_{j}/\ymeasurei{j}
$ if $0\leq \OTzj{j} < \ymeasurei{j}$. Hesian has~$0$ determinant (directions $\Delta\OTz=0$).
}%


\appendix
%


\section{Proofs of Theorems~\ref{theorem:convergence} and~\ref{theorem:newscconvergence}}

\subsection{Convex case ($\Convex=0$)}

If $\Convex=0$, then we take the constant stepsizes~$\stepsizek{\iter}=1$ and~$\dualstepsizek{\iter}=\dualstepsize$ all~$\iter$, so that~\eqref{stepsizeconditionone} reduces to a condition on~$\dualstepsize$,
\begin{equation}\label{stepsizeconditionconvex}
   \dualstepsize\constraintcovariance+\Smoothness \preceq \Dimension\Stepsize.
\end{equation}
Besides,  in  Algorithm~\ref{algorithm:newgenericprimalonly} we get $\Sigmak{\iter}=(\iter+1)\dualstepsize$, while
$\penalizedk{\iter}= \hatcompositek{\iter}  -\compositeopt
 + \dualstepsize\iter (\constraint{\Sallk{\iter}}-\constraintmin ) $ and
 $\lyapunovk{\iter}{\xall} = \frac{\dualstepsize}{2}    \normx{\xallk{\iter}-\xall}{\Dimension^2\Stepsize}^2   +  \iter\dualstepsize  \penalizedk{\iter}$. 
%
%
%
%

We are now in a position to show Theorem~\ref{theorem:convergence}. Part~\eqref{theorem:convergence:i} in the proof, in particular, follows the lines of~\cite[proof of Theorem~1, part~(i)]{luke18}, with a few changes required by the presence of the smooth component~$\smoothfunction$. All the developments are detailed below for completeness.

\smallskip

\begin{proof}[Proof of Theorem~\ref{theorem:convergence}]

\eqref{theorem:convergence:i}
First suppose there is a Lagrange multiplier. Then,
\eqref{boundkpenalizedkSC} becomes
\begin{equation}\label{boundkpenalizedk}
 \iter\penalizedk{\iter} 
\geq
\frac{\dualstepsize\iter^2}{2} (\constraint{\Sallk{\iter}}-\constraintmin )   
-   \frac{\norm{\Aall\multipliersol}^2}{\dualstepsize}
\geq 
-   \frac{\norm{\Aall\multipliersol}^2}{\dualstepsize} 
,
\end{equation}
and we have $\lyapunovk{\iter}{\xall}\geq -\norm{\Aall\multipliersol}^2$. The supermartingale theorem applies, and~\eqref{doob} reduces to
\begin{equation}
 \sum_{\iter=0}^{\infty} 
 \left[
    \frac{1}{2} \normx{\hatxallk{\iter+1}-\xallk{\iter}}{\Dimension\Stepsize-(\dualstepsize\constraintcovariance+\NSmoothness)}^2  
+
\dualstepsize (\constraint{\xallk{\iter}}-\constraintmin)
 \right]
 <
 \infty
 \quad
 \text{a.s.}
 ,
\end{equation}
which almost surely yields  $\constraint{\xallk{\iter}}-\constraintmin=\zero{1/\iter}$ and $\normx{\hatxallk{\iter+1}-\xallk{\iter}}{\Dimension\Stepsize-(\dualstepsize\constraintcovariance+\NSmoothness)}^2\to 0$ (hence, by~\eqref{constraintidentity}, $\constraint{\hatxallk{\iter+1}}-\constraintmin=\frac{1}{2} \normx{\hatxallk{\iter+1}-\xallsol}{\Aall^\transpose\Aall}^2 \to 0$), and the sequence~$\lyapunovk{\iter}{\xallsol} $ converges almost surely to a random quantity not smaller than  $-\norm{\Aall\multipliersol}^2$.  It follows that~$\lyapunovk{\iter}{\xallsol} $ is almost surely bounded from above by a constant~$\constantC(\sample)$, so that
\begin{equation}
 \constantC(\sample) + \norm{\Aall\multipliersol}^2
{\,\geq\,}
\lyapunovk{\iter}{\xallsol}  + \norm{\Aall\multipliersol}^2
 {\,=\,} 
 \frac{\dualstepsize}{2}    \normx{\xallk{\iter}-\xallsol}{\Dimension^2\Stepsize}^2  +  \dualstepsize \iter  \penalizedk{\iter}  + \norm{\Aall\multipliersol}^2
{\,\refereq{\eqref{boundkpenalizedk}}{\geq}\,} 
 \frac{\dualstepsize}{2}    \normx{\xallk{\iter}-\xallsol}{\Dimension^2\Stepsize}^2
 {\,\geq\,}
0
\ \text{a.s.}
,
\end{equation}
which shows that the sequence~$\xallk{\iter}$ is pointwise almost surely bounded, and so are the sequences~$\Sallk{\iter}$ and~$\Zallk{\iter}$ as a consequence of~\eqref{ergodicaverage}. Since almost surely 
$
\iter\dualstepsize  \penalizedk{\iter}
=
\lyapunovk{\iter}{\xallsol}  
 -
\frac{\dualstepsize}{2}    \normx{\xallk{\iter}-\xallsol}{\Dimension^2\Stepsize}^2 
\leq
\lyapunovk{\iter}{\xallsol}
$
is bounded from above, we infer from~\eqref{boundkpenalizedk} that $\constraint{\Sallk{\iter}}-\constraintmin=\magnitude{1/\iter^2}$ almost surely.
Since we also have $\constraint{\xallk{\iter}}-\constraintmin=\zero{1/\iter}$ almost surely, $\Sigmak{\iter-1}/{\Sigmak{\iter}}=\magnitude{1}$, and $\dualstepsizek{\iter}/\Sigmak{\iter}=\magnitude{1/\iter}$ as $\iter\to\infty$,
it follows from   
Line~\ref{algorithm:genericZallk} of Algorithm~\ref{algorithm:newgenericprimalonly} and from the convexity of~$\constraintfunction$ that $\constraint{\Zallk{\iter}}-\constraintmin=\magnitude{1/\iter^2}$ almost surely. Then, using $\Aall\xallsol=\bvector$, we find
\begin{equation}\label{boundinnerproduct}
\begin{array}{l}
 \inner{\grad \constraint{\Zallk{\iter}}}{\xallsol-\hatxallk{\iter+1}} 
 \refereq{\eqref{constraint}}{=}
 \inner{\Aall^\transpose(\Aall\Zallk{\iter}-\bvector)}{\xallsol-\hatxallk{\iter+1}} 
 \qquad\\\hfill
 =
 \inner{\Aall^\transpose\Aall(\Zallk{\iter}-\xallsol)}{\xallsol-\hatxallk{\iter+1}} 
 \leq
\norm{\Aall(\Zallk{\iter}-\xallsol)}\norm{\Aall(\xallsol-\hatxallk{\iter+1})} 
 \qquad\\\hfill
\refereq{\eqref{constraintidentity}}{=}
2 \sqrt{\constraint{\Zallk{\iter}}-\constraintmin}\,\sqrt{\constraint{\hatxallk{\iter+1}}-\constraintmin}
= \zero{1/\iter}
,
 \end{array}
\end{equation}
where we have used $\constraint{\hatxallk{\iter+1}}-\constraintmin \to 0$. By convexity and smoothness of~$\smoothfunction$, we find
\begin{equation}\label{convexlipschitzresult}
\begin{array}{rcl}
\inner{\grad\smooth{\xallk{\iter}}}{\xallsol-\hatxallk{\iter+1}}
&=&
\inner{\grad\smooth{\xallk{\iter}}}{\xallsol-\xallk{\iter}}
-
\inner{\grad\smooth{\xallk{\iter}}}{\hatxallk{\iter+1}-\xallk{\iter}}
\\
&\refereq{\eqref{smoothnessconvexity}}{\leq}&
[\smooth{\xallsol}-\smooth{\xallk{\iter}}]
-
[\smooth{\hatxallk{\iter+1}}-\smooth{\xallk{\iter}} - \frac{1}{2}\normx{\hatxallk{\iter+1}-\xallk{\iter}}{\Smoothness}^2]
\\
&=&
\smooth{\xallsol} - \smooth{\hatxallk{\iter+1}} + \frac{1}{2}\normx{\hatxallk{\iter+1}-\xallk{\iter}}{\Smoothness}^2
.
\end{array}
\end{equation}

Consider a bounded sequence~$(\xallk{\iter})$ (this event happens with probability one), and one of its convergent subsequence~$(\xallk{\iter_\altiter})$ with accumulation point~$\altxall$, which is feasible since $\constraint{\altxall}=\lim_{\altiter\to\infty}\constraint{\xallk{\iter_\altiter}}=\constraintmin$.
Now, applying  Lemma~\ref{lemma:descentargumentcostnonsmooth} at point~$\xallsol$ gives
\begin{equation}\notag
\begin{array}{rcl}
\Nnonsmooth{\hatxallk{\iter_\altiter+1}} - \Nnonsmooth{\xallsol}
&\refereq{\eqref{descentargument}}{\leq}&
\inner{\grad\smooth{\xallk{\iter_\altiter}} + \Sigmak{\iter_\altiter}\grad \constraint{\Zallk{\iter_\altiter}} + \Dimension\Stepsizek{\iter_\altiter}(\hatxallk{\iter_\altiter+1}-\xallk{\iter_\altiter})  }{\xallsol-\hatxallk{\iter_\altiter+1}} 
 \\ 
&\refereq{\eqref{convexlipschitzresult}}{\leq}&
\smooth{\xallsol} - \smooth{\hatxallk{\iter_\altiter+1}} + \frac{1}{2}\normx{\hatxallk{\iter_\altiter+1}-\xallk{\iter_\altiter}}{\Smoothness}^2 
\qquad\qquad\qquad \\ &&\hfill
+ \inner{\Sigmak{\iter_\altiter}\grad \constraint{\Zallk{\iter_\altiter}} + \Dimension\Stepsizek{\iter_\altiter}(\hatxallk{\iter_\altiter+1}-\xallk{\iter_\altiter})  }{\xallsol-\hatxallk{\iter_\altiter+1}} 
,
\end{array}
\end{equation}
which rewrites as
\begin{equation}
\begin{array}{l}
 \composite{\hatxallk{\iter_\altiter+1}} - \composite{\xallsol}
\leq
\frac{1}{2}\normx{\hatxallk{\iter_\altiter+1}-\xallk{\iter_\altiter}}{\Smoothness}^2 
+ \Sigmak{\iter_\altiter} \inner{\grad \constraint{\Zallk{\iter_\altiter}}}{\xallsol-\hatxallk{\iter_\altiter+1}} 
\qquad\\\hfill
+ \inner{\Dimension\Stepsizek{\iter_\altiter}(\hatxallk{\iter_\altiter+1}-\xallk{\iter_\altiter})  }{\xallsol-\hatxallk{\iter_\altiter+1}} 
.
\end{array}
\end{equation}
Since $\normx{\hatxallk{\iter_\altiter+1}-\xallk{\iter_\altiter}}{\Dimension\Stepsize-(\dualstepsize\constraintcovariance+\NSmoothness)}^2\to 0$, the sequence $(\hatxallk{\iter_\altiter+1})$ is bounded and, by taking the limit of the last equation for $\altiter\to\infty$, we find, using~\eqref{boundinnerproduct}, $\Sigmak{\iter_\altiter}=\iter_\altiter\dualstepsize$ and the lower-semicontinuity of~$\compositefunction$: $\composite{\altxall}  \leq \lim_{\altiter\to\infty}  \composite{\hatxallk{\iter_\altiter+1}} \leq \composite{\xallsol} $, and $\xallsol\in\mathcal{S}$.
To show that the bounded sequence~$(\xallk{\iter})$ has a unique accumulation point, we consider a second convergent subsequence $\xallk{\bar\iter_\altiter}\to\altaltxall$.
Recall that $\lyapunovk{\iter}{\xall} =  \frac{\dualstepsize}{2}    \normx{\xallk{\iter}-\xall}{\Dimension^2\Stepsize}^2   +  \iter\dualstepsize  \penalizedk{\iter}$ and that $\lyapunovk{\iter}{\xall}$ is convergent for our bounded sequence~$(\xallk{\iter})$, thus admitting the same limit for subsequences~$(\xallk{\iter_\altiter})$ and~$(\xallk{\bar\iter_\altiter})$. Computing these limits at $\xall=\altxall$ yields $\lim_{\altiter\to\infty}   \lyapunovk{\iter_\altiter}{\altxall} =  \lim_{\altiter\to\infty}  \iter_\altiter\dualstepsize  \penalizedk{\iter_\altiter}$ and $\lim_{\altiter\to\infty}   \lyapunovk{\bar\iter_\altiter}{\altxall} =  \frac{\dualstepsize}{2}    \normx{\altaltxall-\altxall}{\Dimension^2\Stepsize}^2   + \lim_{\altiter\to\infty}  \bar\iter_\altiter\dualstepsize  \penalizedk{\bar\iter_\altiter}$, hence $\lim_{\altiter\to\infty}  \iter_\altiter\dualstepsize  \penalizedk{\iter_\altiter} = \frac{\dualstepsize}{2}    \normx{\altaltxall-\altxall}{\Dimension^2\Stepsize}^2   + \lim_{\altiter\to\infty}  \bar\iter_\altiter\dualstepsize  \penalizedk{\bar\iter_\altiter}$. Repeating this operation at $\xall=\altaltxall$ gives $\frac{\dualstepsize}{2}    \normx{\altxall-\altaltxall}{\Dimension^2\Stepsize}^2 + \lim_{\altiter\to\infty}  \iter_\altiter\dualstepsize  \penalizedk{\iter_\altiter} =     \lim_{\altiter\to\infty}  \bar\iter_\altiter\dualstepsize  \penalizedk{\bar\iter_\altiter}$. Thus $\normx{\altxall-\altaltxall}{\Dimension^2\Stepsize}^2=0$ and $\altxall=\altaltxall$, which shows that almost surely~$(\xallk{\iter})$ converges pointwise to a point of~$\mathcal{S}$.
Let~$\xallsol$ be that limit point.
From \cref{lemma:extrapolation}, we know that 
$
\sum_{\altiter=0}^{\iter} |\gammacoefkl{\iter}{\altiter}|
=
\sum_{\altiter=0}^{\iter} \gammacoefkl{\iter}{\altiter}
=
\identityI{\primaldimension}
$.
Moreover, \eqref{gammacoefkl} gives
$ \gammacoefkl{\iter+1}{\altiter}=(\Sigmak{\altiter}/\Sigmak{\iter}) \gammacoefkl{\altiter+1}{\altiter}=\magnitude{1/\iter}$ and thus, for all~$\altiter$, $\gammacoefkl{\iter+1}{\altiter}\to 0$ as $\iter\to\infty$. It follows from~\eqref{Sallk} and the Toeplitz Theorem, \cite{Toeplitz1911}, that 
$
 \Sallk{\iter} = \sum_{\altiter=0}^{\iter} \gammacoefkl{\iter}{\altiter} \xallk{\altiter}
 \to \xallsol \in\mathcal{S}
$
almost surely, which completes the proof of~\eqref{theorem:convergence:i}.

%

\smallskip
\eqref{theorem:convergence:ii}
%
%
We know from Section~\ref{section:withoutmultiplier} that almost surely $\scaledlyapunovk{\iter}{\xallsol}$ converges  to $Z(\sample)\geq \compositemin-\compositeopt$ and is thus bounded from above by a constant~$\constantC(\sample)$, so that $\constantC(\sample)\geq \scaledlyapunovk{\iter}{\xallsol}
\geq  \penalizedk{\iter}$. It follows from~\eqref{boundkpenalizedkSCwithout} that
%
%
$
\constraint{\Sallk{\iter}}-\constraintmin
\leq 
[  
\penalizedk{\iter} 
+\compositeopt - \compositemin]
/(\iter\dualstepsize)
\leq 
[  
\constantC(\sample) 
+\compositeopt - \compositemin]
/(\iter\dualstepsize)
$ and, consequently, we find $\constraint{\Sallk{\iter}}-\constraintmin
=\magnitude{1/\iter}$ almost surely.
From~\eqref{doob} we also have $(\constraint{\xallk{\iter}}-\constraintmin)=\zero{1} $ almost surely.
It follows from \cref{algorithm:genericZallk} in \cref{algorithm:newgenericprimalonly} and from the convexity of~$\constraintfunction$ that almost surely $\constraint{\Zallk{\iter}}-\constraintmin
=\magnitude{1/\iter}$.
%
%

Now, consider the sublevel sets defined by
\begin{equation}\label{eqdiese}
 \sublevelset{c} = \{\xall : \max(\constraint{\xall}-\constraintmin , \composite{\xall} - \compositeopt )\leq c  \},
 \qquad c\geq 0
 .
\end{equation}
Since $\sublevelset{0} = \mathcal{S}$ is bounded and both~$\constraintfunction$ and~$\compositefunction$ are convex functions, the sets~$\sublevelset{c}$ are bounded for all~$c\geq 0$.
From $\hatcompositek{\iter}\geq\composite{\Sallk{\iter}}$ and the definitions of~$\scaledlyapunovk{\iter}{\xallsol}$, we find
$
\scaledlyapunovk{\iter}{\xall} 
\geq
  \composite{\Sallk{\iter}} -\compositeopt
 + \Sigmak{\iter-1} (\constraint{\Sallk{\iter}}-\constraintmin )
$, which yields
\begin{equation} \label{almostsureanalysis}
\constraint{\Sallk{\iter}}-\constraintmin 
\leq
\frac{1}{\Sigmak{\iter-1}} (\scaledlyapunovk{\iter}{\xallsol} +  \compositeopt -\compositemin)
\quad \text{and} \quad
\composite{\Sallk{\iter}} - \compositeopt
\leq
\scaledlyapunovk{\iter}{\xallsol}  
,
\end{equation}
in which $\scaledlyapunovk{\iter}{\xallsol}\to Z(\sample)$ for any outcome $\sample\in\Omega$, where~$\Omega$ is an event of probability~$1$. It follows that both sequences $(\constraint{\Sallk{\iter}(\sample)}-\constraintmin)$ and
$(\composite{\Sallk{\iter}(\sample)} - \compositeopt)$ are bounded by a constant~$\constantCprime(\sample)$. Hence, the sequence~$\Sallk{\iter}(\sample)$ belongs to~$\sublevelset{\constantCprime(\sample)}$, which is a bounded set. Thus, the sequence~$\Sallk{\iter}(\sample)$ is bounded if $\sample\in\Omega$.
%

Since $\Sigmak{\iter-1}=\dualstepsize\iter$, \eqref{almostsureanalysis} also yields $\constraint{\Sallk{\iter}(\sample)}-\constraintmin = \magnitude{1/\iter}$ if $\sample\in\Omega$. 
Hence $\constraint{\Sallk{\iter}(\sample)}-\constraintmin = \magnitude{1/\iter}$ almost surely, as seen previously. 
Besides, all the accumulation points of~$(\Sallk{\iter}(\sample))$ are feasible if $\sample\in\Omega$. It follows that $\lim\inf_{\iter\to\infty} \composite{\Sallk{\iter}} \geq \compositeopt$ if $\sample\in\Omega$, hence with probability one.
Now, observe that~\eqref{fortyeight} reduces in the convex case to
\begin{equation} \label{fortyeightconvex}
 \Expectation{ \frac{1}{2\iter}  \normx{\xallk{\iter}-\xallsol}{\Dimension^2\Stepsize+\Dimension\Convex}^2  } +   \dualstepsize\iter  \expectation{ \constraint{\Sallk{\iter}}-\constraintmin }   + \expectation{
    \composite{\Sallk{\iter}} -\compositeopt }
\leq  
\frac{\constantC}{\dualstepsize\iter} 
,
\end{equation} 
%
%
%
and suppose that there is an event $\tilde\Omega$ with nonzero probability such that $\lim\sup_{\iter\to\infty} \composite{\Sallk{\iter}(\sample)} > \compositeopt$. Then, taking the limit superior in~\eqref{fortyeightconvex} leads to a contradiction. Hence, $\lim\sup_{\iter\to\infty} \composite{\Sallk{\iter}} \leq \compositeopt$ with probability one. 
%
Thus,  $\lim_{\iter\to\infty} \composite{\Sallk{\iter}} = \compositeopt$ with probability one, and almost surely the accumulation points of~$(\Sallk{\iter})$ belong to~$\mathcal{S}$.

It remains to derive the convergence rate of the sequence~$(\constraint{\Sallk{\iter}(\sample)})$. We know that  $ \composite{\Sallk{\iter}(\sample)} - \compositeopt  = \zero{1}$ for every $\sample\in\Omega$.
Consider the nonnegative sequence $u^{\iter}(\sample)=\max(0,\compositeopt-\composite{\Sallk{\iter}(\sample)} )$.
For all~$\iter$ and for all~$\sample\in\Omega$, we have
$ 
\condexpectation{\sample\in\Omega}{\compositeopt-  \composite{\Sallk{\iter}(\sample)}}
\leq
\condexpectation{\sample\in\Omega}{u^{\iter}(\sample)}
$
where
$ (u^{\iter}) $ converges pointwise towards~$0$ on~$\Omega$.
Since~$(u^{\iter})$
satisfies
$|u^{\iter}(\sample)|\leq \compositeopt-\compositemin$ for all~$\iter$,
with 
$
\condexpectation{\sample\in\Omega}{|\compositeopt-\compositemin|}=\compositeopt-\compositemin<\infty
$, 
Lebesgue's dominated convergence theorem applies, and we find $\lim_{\iter\to\infty}\condexpectation{\sample\in\Omega}{u^{\iter}(\sample)}=0$.
It follows that
$
\lim_{\iter\to\infty}\expectation{\compositeopt-  \composite{\Sallk{\iter}(\sample)}} \leq \proba{\sample\in\Omega} \, \lim_{\iter\to\infty}\condexpectation{\sample{\,\in\,}\Omega}{u^{\iter}(\sample)} + (1-\proba{\sample\in\Omega}) \,\lim_{\iter\to\infty}\condexpectation{\sample{\,\notin\,}\Omega}{u^{\iter}(\sample)} \leq 0
$, 
where we have used $\proba{\sample\in\Omega}=1$ and $u^{\iter}\leq \compositeopt-\compositemin $.
Combining this result with~\eqref{fortyeightconvex} gives
\begin{equation*} 
     \expectation{ \constraint{\Sallk{\iter}}-\constraintmin }    
\leq  
\frac{\constantC}{(\dualstepsize\iter)^2} 
+
\frac{\expectation{
   \compositeopt - \composite{\Sallk{\iter}}  }}{\dualstepsize\iter}
=
\zero{{1}/{\iter}}
,
\end{equation*} 
which completes the proof of~\cref{theorem:convergence}.
%
%
%
%
%
%
%
%
\end{proof}


\subsection{Strongly convex case}

The proof of~\ref{theorem:newscconvergence} under strong convexity is similar to that of Theorem~\ref{theorem:convergence}. Recall from Section~\ref{section:newstronglyconvexanalysis}, however, that in Algorithm~\ref{algorithm:newgenericprimalonly} we now have $\stepsizek{\iter}-2/\iter=\zero{1/\iter}$,  $\dualstepsizek{\iter} - ({\alpha\iter}/{2} - \fixed{\betaparameter} ) = \zero{1} $,  and $ \Sigmak{\iter}- [ \alpha\iter(\iter+1)/{4} - (\iter+1) \fixed{\betaparameter}] = \zero{\iter} $.

\smallskip
\begin{proof}[Proof of Theorem~\ref{theorem:newscconvergence}]

\eqref{theorem:newscconvergence:i} 
We begin as in the proof of Theorem~\ref{theorem:newscconvergence}\eqref{theorem:newscconvergence:i} by
showing that~$\iter\penalizedk{\iter}$ is bounded from below.
If~\eqref{genericP}  admits an optimal primal-dual pair $(\xallsol,\multipliersol)$ satisfying~\eqref{boundcompositeSallk},
then, 
it follows from~\eqref{boundkpenalizedkSC} that the sequence $\lyapunovk{\iter}{\xall}\geq -\norm{\Aall\multipliersol}^2$ is bounded from below, and the supermartingale theorem applies in~\eqref{earlystronglyconvexfejersimple} to the nonnegative sequence $\lyapunovk{\iter}{\xall} + \norm{\Aall\multipliersol}^2$, so that
\begin{equation}
 \sum_{\iter=0}^{\infty} 
 \left[
    \frac{\dualstepsizek{\iter}}{2\stepsizek{\iter}} \normx{\hatxallk{\iter+1}-\xallk{\iter}}{\Dimension\Stepsize-\stepsizek{\iter}(\dualstepsizek{\iter}\constraintcovariance+\NSmoothness)}^2    
+
(\dualstepsizek{\iter})^2 (\constraint{\xallk{\iter}}-\constraintmin)
 \right]
 <
 \infty
 \quad
 \text{a.s.}
 ,
\end{equation}
where $\Dimension\Stepsize-\stepsizek{\iter}(\dualstepsizek{\iter}\constraintcovariance+\NSmoothness)\to \Dimension\Stepsize-\alpha\constraintcovariance$,
which almost surely yields  $\constraint{\xallk{\iter}}-\constraintmin=\zero{1/\iter^3}$ and $\iter^2\normx{\hatxallk{\iter+1}-\xallk{\iter}}{\Dimension\Stepsize}^2\to 0$ (hence, by~\eqref{constraintidentity}, $\iter^2[\constraint{\hatxallk{\iter+1}}-\constraintmin ] =  \frac{\iter^2 }{2} \normx{\hatxallk{\iter+1}-\xallsol}{\Aall^\transpose\Aall}^2 \to 0$), and the sequence~$\lyapunovk{\iter}{\xallsol} $ converges almost surely to a random quantity not smaller than  $-\norm{\Aall\multipliersol}^2$.  It follows that~$\lyapunovk{\iter}{\xallsol} $ is almost surely bounded from above by a constant~$\constantC$, so that
\begin{equation}
 \constantC + \norm{\Aall\multipliersol}^2
\geq 
\lyapunovk{\iter}{\xallsol}  + \norm{\Aall\multipliersol}^2
\refereq{\eqref{boundkpenalizedkSC}}{\geq} 
 \frac{\dualstepsizek{\iter}}{2\stepsizek{\iter}}    \normx{\xallk{\iter}-\xallsol}{\Dimension^2\Stepsize}^2    \geq
0
\quad \text{a.s.}
,
\end{equation}
which shows that $   \normx{\xallk{\iter}-\xallsol}{\Dimension^2\Stepsize}   =\magnitude{1/\iter}$ and, since~$\Dimension^2\Stepsize$ has full rank,  the sequence~$(\xallk{\iter})$ converges pointwise almost surely to the solution~$\xallsol$. Since 
\eqref{gammacoefkl} in \cref{lemma:extrapolation} gives
$ \gammacoefkl{\iter+1}{\altiter}=(\Sigmak{\altiter}/\Sigmak{\iter}) \gammacoefkl{\altiter+1}{\altiter}=\magnitude{1/\iter^2}$ for all~$\altiter$, 
%
%
so do sequences~$(\Sallk{\iter})$ and~$(\Zallk{\iter})$ as a consequence of~%
\eqref{Sallk}, \cref{algorithm:genericZallk} in \cref{algorithm:newgenericprimalonly}, and the Toeplitz Theorem. 
Since almost surely 
$ 
\Sigmak{\iter-1}  \penalizedk{\iter}
=
\lyapunovk{\iter}{\xallsol}  
 -
\frac{\dualstepsizek{\iter}}{2\stepsizek{\iter}}    \normx{\xallk{\iter}-\xallsol}{\Dimension^2\Stepsize+\stepsizek{\iter}\Dimension\Convex}^2
\leq
\lyapunovk{\iter}{\xallsol}
$
is bounded from above, we infer from~\eqref{boundkpenalizedkSC} that $\constraint{\Sallk{\iter}}-\constraintmin=\magnitude{1/\iter^4}$ almost surely.
Since we also have $\constraint{\xallk{\iter}}-\constraintmin=\zero{1/\iter^3}$ almost surely, $\Sigmak{\iter-1}/{\Sigmak{\iter}}=\magnitude{1}$, and $\dualstepsizek{\iter}/\Sigmak{\iter}=\magnitude{1/\iter}$ as $\iter\to\infty$,
it follows from   
Line~\ref{algorithm:genericZallk} of Algorithm~\ref{algorithm:newgenericprimalonly} and from the convexity of~$\constraintfunction$ that $\constraint{\Zallk{\iter}}-\constraintmin=\magnitude{1/\iter^4}$ almost surely.

\eqref{theorem:newscconvergence:ii}
In the strongly convex case, we know from Section~\ref{section:withoutmultiplier} that almost surely $\scaledlyapunovk{\iter}{\xallsol}$ converges  to $Z(\sample)\geq \compositemin-\compositeopt$ and is thus bounded from above by a constant~$\constantC(\sample)$, so that $\constantC(\sample)\geq \scaledlyapunovk{\iter}{\xallsol}
\geq  \penalizedk{\iter}$. It follows from~\eqref{boundkpenalizedkSCwithout} that
%
%
$
\constraint{\Sallk{\iter}}-\constraintmin
\leq 
[  
\penalizedk{\iter} 
+\compositeopt - \compositemin]
/\Sigmak{\iter-1}
\leq 
[  
\constantC(\sample) 
+\compositeopt - \compositemin]
/\Sigmak{\iter-1}
$ and, since $ \Sigmak{\iter} = \magnitude{\iter^2} $, we find $\constraint{\Sallk{\iter}}-\constraintmin
=\magnitude{1/\iter^2}$ almost surely.
From~\eqref{doob} we also have $(\constraint{\xallk{\iter}}-\constraintmin)=\zero{1/\iter} $ almost surely.
It follows from \cref{algorithm:genericZallk} in \cref{algorithm:newgenericprimalonly} and from the convexity of~$\smoothfunction$ that $\constraint{\Zallk{\iter}}-\constraintmin
=\magnitude{1/\iter^2}$ almost surely.

Since \eqref{almostsureanalysis} still holds, we find that the sequence~$\Sallk{\iter}(\sample)$ is bounded for $\sample\in\Omega$, 
with $\constraint{\Sallk{\iter}(\sample)}-\constraintmin = \magnitude{1/\iter^2}$. 

%
All the accumulation points of~$(\Sallk{\iter}(\sample))$ are thus feasible if $\sample\in\Omega$. As a consequence, we find $\lim\inf_{\iter\to\infty} \composite{\Sallk{\iter}} \geq \compositeopt$ almost surely.
Under strong convexity, \eqref{fortyeight} now reduces to
\begin{equation} \label{fortyeightsconvex}
 \Expectation{ \frac{1}{2\Sigmak{\iter-1}}  \normx{\xallk{\iter}-\xallsol}{\scalematrixk{\iter}}^2  } +   \Sigmak{\iter-1} \expectation{ \constraint{\Sallk{\iter}}-\constraintmin }  +    \expectation{ \composite{\Sallk{\iter}} -\compositeopt} 
\leq  
\frac{\constantC}{\Sigmak{\iter-1}} 
.
\end{equation} 
Reasoning as we did for~\eqref{fortyeightconvex}, we also find $\lim\sup_{\iter\to\infty} \composite{\Sallk{\iter}} \leq \compositeopt$ with probability one. 
It follow that, almost surely, $\lim_{\iter\to\infty} \composite{\Sallk{\iter}} = \compositeopt$ and~$\xallsol$ is the unique accumulation point of~$(\Sallk{\iter})$.
As for the convergence rate of the sequence~$(\constraint{\Sallk{\iter}(\sample)})$, it still holds that $ \composite{\Sallk{\iter}(\sample)} - \compositeopt  = \zero{1}$ almost surely, and that  $
\lim_{\iter\to\infty}\expectation{\compositeopt-  \composite{\Sallk{\iter}(\sample)}} \leq 0
$.
Applying this last resul to~\eqref{fortyeightconvex} yields
\begin{equation*} 
     \expectation{ \constraint{\Sallk{\iter}}-\constraintmin }    
\leq  
\frac{\constantC}{(\Sigmak{\iter-1})^2} 
+
\frac{\expectation{
   \compositeopt - \composite{\Sallk{\iter}}  }}{\Sigmak{\iter-1}}
=
\zero{{1}/{\iter^2}}
,
\end{equation*} 
which completes the proof of~\cref{theorem:newscconvergence}.
\end{proof}
\bibliographystyle{model2-names}








\bibliography{obPDLO-LA}

\end{document}